\tikzset{help lines/.style={step=#1cm,very thin, color=gray},
help lines/.default=.5} 
\tikzset{thick grid/.style={step=#1cm,thick, color=gray},
thick grid/.default=1} 
\numberwithin{figure}{section}
\numberwithin{table}{section}
\theoremstyle{definition}
\theoremstyle{plain}
\newcommand{\thistheoremname}{}
\newtheorem*{genericthm*}{\thistheoremname}
\newenvironment{namedthm*}[1]
  {\renewcommand{\thistheoremname}{#1}%
   \begin{genericthm*}}
  {\end{genericthm*}}
\DeclareMathOperator{\ZZ}{\mathbb{Z}}
\newcommand{\condf}{{\mathfrak{f}}}
\newcommand{\lrabs}[1]{\left\lvert #1 \right\lvert}
\newcommand{\lrp}[1]{\left(#1\right)}
\newcommand{\lrb}[1]{\left[#1\right]}
\newtheorem{theorem}{Theorem}[section]
\newtheorem{corollary}[theorem]{Corollary}
\newtheorem{lemma}[theorem]{Lemma}
\newtheorem{proposition}[theorem]{Proposition}
\theoremstyle{definition}
\theoremstyle{definition}
\theoremstyle{remark}
\newtheorem{conjecture}[theorem]{\bf Conjecture}
\numberwithin{equation}{section}
\newcommand{\ben}{\begin{equation}}
\newcommand{\een}{\end{equation}}
\newcommand{\SL}[1]{\ensuremath{{\mathrm {SL}_{ #1 }}}}
\NewDocumentCommand{\sump}{e{_}}
 {%
  \DOTSB
  \mathop{\IfNoValueTF{#1}{\sump@{}}{\sump@{#1}}}%
  \nolimits
 }
\newcommand{\sump@}[1]{\mathpalette\sump@@{#1}}
\newcommand{\sump@@}[2]{%
  \ifx#1\displaystyle
    {\sump@display{#2}}%
  \else
    \sum@\nolimits'_{#2}%
  \fi
}
\newcommand{\sump@display}[1]{%
  \sbox\z@{$\m@th\displaystyle\sum@\nolimits'$}%
  \sbox\tw@{$\m@th\displaystyle\sum@\limits_{#1}$}%
  \sbox\@tempboxa{$\m@th\displaystyle'$}
  \mathop{\sum@\nolimits' \kern-\wd\@tempboxa}\limits_{#1}%
  \ifdim\wd\z@>\wd\tw@
    \kern\dimexpr\wd\z@-\wd\tw@\relax
  \fi
}
\newcommand{\nw}{{\textnormal{new}}}
\DeclareMathOperator{\old}{old}
\DeclareMathOperator{\Tr}{Tr}
\DeclareMathOperator{\lcm}{lcm}
\setlist[enumerate]{leftmargin=*,widest=0}
\setlist[itemize]{leftmargin=*,widest=0}
\def\subsection{\@startsection{subsection}{2}%
  \z@{.5\linespacing\@plus.7\linespacing}{.3\linespacing}%
  {\normalfont\bfseries}}
\def\subsubsection{\@startsection{subsubsection}{3}%
  \z@{.5\linespacing\@plus.7\linespacing}{.3\linespacing}%
  {\normalfont\bfseries}}
\title{Nonvanishing of second coefficients of Hecke polynomials on the newspace}
\subjclass[2020]{11F25, 11F72, and 11F11.}
\keywords{Hecke operator, Hecke polynomial, Eichler-Selberg trace formula, New subspace, Atkin-Lehner-Li decomposition}
\author[W. Cason]{William Cason}
\address[W. Cason]{School of Mathematical and Statistical Sciences, Clemson University, Clemson, SC, 29634}
\email{wbcason@clemson.edu}
\author[A. Jim]{Akash Jim}
\address[A. Jim]{Department of Mathematics, Princeton University, Princeton, NJ, 08540}
\email{ajim@princeton.edu}
\author[C. Medlock]{Charlie Medlock}
\address[C. Medlock]{School of Mathematical and Statistical Sciences, Clemson University, Clemson, SC, 29634}
\email{medloc5@clemson.edu}
\author[E. Ross]{Erick Ross}
\address[E. Ross]{School of Mathematical and Statistical Sciences, Clemson University, Clemson, SC, 29634}
\email{erickr@clemson.edu}
\author[T. Vilardi]{Trevor Vilardi}
\address[T. Vilardi]{School of Mathematical and Statistical Sciences, Clemson University, Clemson, SC, 29634}
\email{tvilard@clemson.edu}
\author[H. Xue]{Hui Xue}
\address[H. Xue]{School of Mathematical and Statistical Sciences, Clemson University, Clemson, SC, 29634}
\email{huixue@clemson.edu}
\begin{document}

\begin{abstract}
For $m \geq 1$, let $N \geq 1$ be coprime to $m$, $k \geq 2$, and $\chi$ be a Dirichlet character modulo $N$ with $\chi(-1)=(-1)^k$. Then let $T_m^{\text{new}}(N,k,\chi)$ denote the restriction of the $m$-th Hecke operator to the space $S_k^{\text{new}}(\Gamma_0(N), \chi)$. We demonstrate that for fixed $m$ and trivial character $\chi$, the second coefficient of the characteristic polynomial of $T_m^{\text{new}}(N,k)$ vanishes for only finitely many pairs $(N,k)$, and we further determine the sign. To demonstrate our method, for $m=2,4$, we also compute all pairs $(N,k)$ for which the second coefficient vanishes. In the general character case, we also show that excluding an infinite family where $S_k^{\text{new}}(\Gamma_0(N), \chi)$ is trivial, the second coefficient of the characteristic polynomial of $T_m^{\text{new}}(N,k,\chi)$ vanishes for only finitely many triples $(N,k,\chi)$.
\end{abstract}

\maketitle

\section{Introduction}

Let $S_k(\Gamma_0(N), \chi)$ denote the space of cusp forms of weight $k\geq2$, level $N\geq1$, and character $\chi$. Here, $\chi$ is a Dirichlet character modulo $N$ such that $\chi(-1)=(-1)^k$. For $m \geq 1$ coprime to $N$, we will denote the $m$-th Hecke operator on $S_k(\Gamma_0(N), \chi)$ by $T_m(N,k,\chi)$. (Throughout this entire we will only consider $m$ coprime to $N$.) When the character $\chi$ is trivial, we will drop $\chi$ and simply write  $S_k(\Gamma_0(N))$ and $T_m(N,k)$, respectively. 

The space of cusp forms has a decomposition  $S_k(\Gamma_0(N), \chi)=S_k^{\old}(\Gamma_0(N), \chi)\oplus S_k^\nw(\Gamma_0(N), \chi)$; see Cohen and Stromberg \cite[Proposition~13.3.2]{cohen-stromberg}. The subspaces in this decomposition are orthogonal complements with respect to the Petersson inner product, and further, they are stable under the Hecke operator $T_m(N,k,\chi)$.
We write $T_m^\nw(N,k,\chi)$ for the restriction of $T_m(N,k,\chi)$ to the new subspace $S_k^\nw(\Gamma_0(N), \chi)$.

Let $d=\dim S_k(\Gamma_0(N),\chi)$ and $n = \dim S_k^\nw(\Gamma_0(N), \chi)$. We write the characteristic polynomials of $T_m(N,k,\chi)$ and $T_m^\nw(N,k,\chi)$ as
\begin{align}
T_m(N,k,\chi)(x)&=\sum_{i=0}^d (-1)^i a_i(m,N,k,\chi) x^{d-i},\quad \text{and}\\
    T_m^\nw(N,k,\chi)(x)
    &=\sum_{i=0}^n (-1)^i a_i^\nw(m,N,k,\chi) x^{n-i}, 
\end{align}
respectively. Here, we refer to $a_i(m,N,k,\chi)$ and $a_i^\nw(m,N,k,\chi)$ as the $i$-th coefficient of the Hecke polynomials for $T_m(N,k,\chi)$ and $T_m^\nw(N,k,\chi)$, respectively. Again, if $\chi$ is trivial, we will drop it from the notation. Observe that the first coefficients $a_1(m,N,k,\chi)$ and $a_1^\nw(m,N,k,\chi)$ are the traces of $T_m(N,k,\chi)$ and $T_m^\nw(N,k,\chi)$, respectively. 

In \cite{rouse}, Rouse conjectured that the traces of Hecke operators (that is $a_1(m,N,k)$) are nonvanishing for non-square $m$, $N$ coprime to $m$, and even $k\geq 16$ or $k=12$. This is a generalization of Lehmer's conjecture \cite{Lehmerconjecture} on the nonvanishing of Ramanujan's $\tau$ function. The work of Clayton et al. \cite{clayton-et-al} and Ross and Xue \cite{ross-xue} studied a related question: the nonvanishing of the second coefficients $a_2(m,N,k,\chi)$. 
In this paper, we study the the nonvanishing of the second coefficients $a_2^\nw(m,N,k,\chi)$ on the new subspace. This new subspace $S_k^\nw(\Gamma_0(N), \chi)$ deserves separate attention; it is generated by newforms, and hence understanding the Hecke operators on $S_k^\nw(\Gamma_0(N), \chi)$ will reveal useful information about newforms. 
The works \cite{clayton-et-al} and \cite{ross-xue} showed that the second coefficient is more predictable than the trace in a certain sense. In this paper, we provide more evidence in this aspect. In particular, we show nonvanishing results for $a_2^\nw(m,N,k,\chi)$ in Theorems \ref{thm:main-theorem} and \ref{thm:m=2-4-Theorem}; however, the analogs of these results have not yet been established for $a_1^\nw(m,N,k,\chi)$.

Our first main result concerns the nonvanishing of $a_2^\nw(m,N,k,\chi)$, and furthermore the sign of $a_2^\nw(m,N,k)$.
Observe that when $\dim S_k^\nw(\Gamma_0(N), \chi) < 2$, the Hecke polynomial for $T_m^\nw(N,k,\chi)$ has degree $< 2$, and hence $a_2^\nw(m,N,k,\chi)$ trivially vanishes.
In the first part of Theorem \ref{thm:main-theorem}, we do not need to exclude the case when $a_2^\nw(m,N,k)$ trivially vanishes, because $\dim S_k^\nw(\Gamma_0(N)) < 2$ for only finitely many pairs $(N,k)$ \cite[Theorem 1.3]{ross}.
However, we do need to exclude the case where $a_2^\nw(m,N,k,\chi)$ trivially vanishes, because as discussed in Section \ref{sec:character-case}, $\dim S_k^\nw(\Gamma_0(N), \chi) = 0$ for an infinite family of triples $(N,k,\chi)$.

\begin{theorem} \label{thm:main-theorem}
Let $m\ge1$ be fixed. Consider $N \geq 1$ coprime to $m$ and $k \geq 2$ even. Then for all but finitely many pairs $(N,k)$,
$$
a_2^\nw(m, N, k) \text{ is  } \begin{cases}
   \text{negative} & \text{when } m \text{ is not a perfect square,} \\
   \text{positive} & \text{when } m \text{ is a perfect square.}
\end{cases}
$$
Furthermore, consider $N \geq 1$ coprime to $m$, $k \geq 2$, and $\chi$ a Dirichlet character modulo $N$ where $\chi(-1) = (-1)^k$.  Then $a_2^\nw(m,N,k,\chi)$ nontrivially vanishes for only finitely many triples $(N,k,\chi)$.
\end{theorem}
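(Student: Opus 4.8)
The plan is to express $a_2^\nw(m,N,k,\chi)$ via Newton's identity in terms of traces of Hecke operators, namely $2a_2^\nw = a_1^\nw(m,N,k,\chi)^2 - a_1^\nw(m^2,N,k,\chi)$ (using that $T_m^2$ and $T_{m^2}$ differ by a term coming from $T_{m^2/p^2}$-type operators twisted by $\chi$, which one must track carefully when $m$ is not squarefree). Then the key is to obtain an asymptotic expansion of each trace as $N$ and $k$ vary. For the new subspace one uses the Atkin–Lehner–Li decomposition to write $S_k^\nw$ in terms of the full spaces $S_k(\Gamma_0(d),\chi)$ for $d \mid N$ via Möbius-type inversion, so that $\Tr T_m^\nw(N,k,\chi) = \sum_{d} \beta(N/d)\, \Tr T_m(d, k, \chi)$ for an explicit multiplicative function $\beta$. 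Feeding in the Eichler–Selberg trace formula for $\Tr T_m(d,k,\chi)$, the dominant contribution is the identity term, which is proportional to $\frac{k-1}{12}\psi(d)$ (with $\psi$ the Dedekind psi function) when $m$ is a perfect square and vanishes otherwise; the next terms are the elliptic and hyperbolic contributions, which are bounded polynomially in $m$ and in the divisor-type functions of $N$, but crucially are $O_m(N^{1/2+\varepsilon})$-type uniformly in $k$, whereas the main term grows like $k \cdot \psi(N^\nw)$ where $\psi$ of the ``new part'' still grows with $N$.

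Concretely, I would first handle the trivial-character statement. Here $\chi$ trivial forces the character-twist subtleties to disappear, and the Eichler–Selberg trace formula takes its classical shape. The main term of $a_1^\nw(m^2,N,k)$ is $\sim c \cdot \frac{k-1}{12} \varphi^\nw(N)$ for a positive constant $c = c(m)$ (here $\varphi^\nw$ denotes the multiplicative function $\prod_{p \mid N}(p - 2 - \text{something})$ arising from the newspace sieve, which is positive and $\to \infty$ with $N$ away from finitely many $N$), while $a_1^\nw(m,N,k)$ contributes only to lower order since $m$ non-square kills the identity term; for $m$ a square the squared trace $a_1^\nw(m,N,k)^2$ is of size $k^2 \psi^\nw(N)^2$, dominating $a_1^\nw(m^2,N,k) \sim k \psi^\nw(N)$, giving $a_2^\nw > 0$; for $m$ non-square $a_1^\nw(m,N,k)$ has no main term so $a_1^\nw(m,N,k)^2$ is genuinely lower order than $a_1^\nw(m^2,N,k) \sim k\psi^\nw(N) > 0$, giving $a_2^\nw < 0$. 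The finitely-many exceptions come from (i) the finitely many $(N,k)$ with $\dim S_k^\nw < 2$, and (ii) bounding the error terms: one needs $k$ or $N$ large enough that the explicit elliptic/hyperbolic/dual terms (all controlled by class numbers $H(\cdot)$, divisor sums, and $\gcd$ conditions, hence $O_m(N^{1/2+\varepsilon})$ uniformly in $k \geq 2$, plus a weight-dependent piece that stays bounded relative to the main term) cannot overcome the main term — a finite check then disposes of the small cases, exactly as in \cite{clayton-et-al} and \cite{ross-xue}.

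For the general-character statement one only needs nonvanishing (no sign), which is easier in principle but messier in the bookkeeping. I would again reduce to the full space via Atkin–Lehner–Li — now the inversion involves summing over $d$ with $\condf(\chi) \mid d \mid N$ and replacing $\chi$ by its restriction modulo $d$ — and apply the trace formula with nontrivial nebentypus. The identity term is again the main term when $m$ is a square (and is $\tfrac{k-1}{12}\psi(N)\cdot[\text{something nonzero}]$); when $m$ is non-square one shows $a_1^\nw(m^2,N,k,\chi)$ still has a nonzero main term of order $k$ times a growing arithmetic factor, so $a_2^\nw \neq 0$ for $N$ or $k$ large. The exceptional family where $S_k^\nw(\Gamma_0(N),\chi) = \{0\}$ identically (discussed in the paper's Section on the character case) is exactly the ``trivially vanishing'' set we are instructed to exclude; outside it, the dimension grows, and the same error-term analysis applies. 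The main obstacle, and where most of the real work lies, is making the error terms in the Eichler–Selberg trace formula \emph{uniform in $k$} while summing over all divisors $d \mid N$ in the Atkin–Lehner–Li inversion: one must show the accumulated elliptic and hyperbolic terms, after the Möbius-type cancellation over $d$, are still $o$ of the main term, and then convert ``all but finitely many asymptotically'' into a genuine finite list by an effective bound — this is the step I expect to require the most care, particularly in the nontrivial-character case where the arithmetic factors depend on $\condf(\chi)$ and on $\gcd(m, \cdot)$ in a way that must be bounded uniformly over the infinitely many characters $\chi$ of each modulus.
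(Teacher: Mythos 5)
Your proposal follows essentially the same strategy as the paper: Newton's identity plus the Hecke composition formula to express $a_2^\nw$ in terms of newspace traces, the $\beta$-weighted Atkin--Lehner--Li inversion (over $\condf(\chi)\mid M\mid N$) to reduce to full-space traces, the Eichler--Selberg trace formula with the identity contribution as the main term when the relevant index is a square, and bounding the remaining terms as lower order so that the sign (resp.\ nonvanishing) follows for all but finitely many $(N,k)$ (resp.\ $(N,k,\chi)$). The only small imprecision is your claim that the elliptic and hyperbolic contributions are $O_m(N^{1/2+\varepsilon})$ uniformly in $k$ --- they in fact carry a factor $m^{k/2}$ --- but since the main term carries the larger factor $(k-1)\,m^{k/2-1}\psi^\nw(N)$ the error-to-main ratio still tends to zero, which is what the paper's bounds on $A_2^\nw$, $A_3^\nw$, $A_4^\nw$ make precise.
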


The approach we use here is similar to that adopted in \cite{clayton-et-al} and \cite{ross-xue}. We first express $a_2^\nw(m, N, k)$ in terms of traces of various Hecke operators. Then for fixed $m$, the Eichler-Selberg trace formula is applied to determine the asymptotic behavior of $a_2^\nw(m, N, k)$ with respect to $N$ and $k$.

For our second main result, we compute explicit bounds for each of the terms in the Eichler-Selberg trace formula and use these bounds to determine the exceptional pairs for the cases of $m = 2$ and $m = 4$ with trivial character. 
Here, we give the pairs $(N,k)$ for which $a_2^\nw(m,N,k)$ nontrivially vanishes. (The pairs $(N,k)$ for which $\dim S_k^\nw(\Gamma_0(N)) < 2$ and hence $a_2^\nw(m,N,k)$ trivially vanishes can be found in Ross \cite[Tables 6.2, 6.3]{ross}.) 

\begin{theorem} \label{thm:m=2-4-Theorem}
    Consider $N\geq1$ odd and $k\geq2$ even. Then $a_2^\nw(2,N,k)$ nontrivially vanishes precisely for $(N,k) \in \{ (37,2),\,  (57,2) \}$, and $a_2^\nw(4,N,k)$ nontrivially vanishes precisely for $(N,k) \in \{ (43,2), (57,2) ,(75,2) ,(205,2)\}$.
\end{theorem}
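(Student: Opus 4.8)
The plan is to reduce $a_2^\nw(m,N,k)$ to an explicit combination of traces of Hecke operators on full cusp spaces and then control each piece by effective versions of the Eichler-Selberg trace formula. By Newton's identity, $a_2^\nw(m,N,k) = \tfrac12\big((\Tr T_m^\nw(N,k))^2 - \Tr (T_m^\nw(N,k))^2\big)$, and the Hecke relation $T_m^2 = \sum_{d\mid m} d^{k-1} T_{m^2/d^2}$ (valid since $\gcd(m,N)=1$ and $\chi$ is trivial) rewrites $(T_m^\nw)^2$ as $\sum_{d\mid m} d^{k-1} T_{m^2/d^2}^\nw$: for $m=2$ this gives $a_2^\nw(2,N,k)=\tfrac12\big((\Tr T_2^\nw(N,k))^2 - \Tr T_4^\nw(N,k) - 2^{k-1}\dim S_k^\nw(\Gamma_0(N))\big)$, and for $m=4$ a similar identity involving also $\Tr T_{16}^\nw(N,k)$ and $2^{k-1}\Tr T_4^\nw(N,k)$. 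Then the Atkin-Lehner-Li decomposition together with Möbius inversion expresses every newspace trace (and the dimension) through full-space data, $\Tr T_{m'}^\nw(N,k)=\sum_{d\mid N}(\mu*\mu)(N/d)\,\Tr T_{m'}(d,k)$ and $\dim S_k^\nw(\Gamma_0(N))=\sum_{d\mid N}(\mu*\mu)(N/d)\dim S_k(\Gamma_0(d))$. After this step $a_2^\nw(m,N,k)$ is a completely explicit function of the terms in the Eichler-Selberg trace formula for the levels $d\mid N$.

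Second, I isolate the dominant term together with its sign, mirroring the proof of Theorem~\ref{thm:main-theorem} but keeping everything effective. Write $\psi^\nw(N)=\sum_{d\mid N}(\mu*\mu)(N/d)\,\psi(d)$, the newform analogue of the index $\psi$. For $m=2$ (non-square), the term $-\tfrac12\,2^{k-1}\dim S_k^\nw(\Gamma_0(N))$ is negative and, via an explicit lower bound $\dim S_k^\nw(\Gamma_0(N)) \ge \tfrac{k-1}{12}\psi^\nw(N) - E(N)$, dominates all others in both $N$ and $k$: the competitor $(\Tr T_2^\nw(N,k))^2$ has no identity contribution (since $2$ is not a square), so it is only $O(2^{k}\cdot N^{o(1)})$. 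For $m=4$ (a square), the identity term of $\Tr T_4^\nw(N,k)$ is $\sim\tfrac{k-1}{12}\psi^\nw(N)\,2^{k-2}$, so $\tfrac12(\Tr T_4^\nw(N,k))^2$ is the dominant term; it is positive and, being quadratic in $\psi^\nw(N)$, eventually beats the terms $-\tfrac12\,4^{k-1}\dim S_k^\nw(\Gamma_0(N))$, $-\tfrac12\,2^{k-1}\Tr T_4^\nw(N,k)$, and $-\tfrac12\Tr T_{16}^\nw(N,k)$, which are all only linear in $\psi^\nw(N)$. This yields the predicted signs; the point is to make the crossover explicit.

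Third, I make the remaining terms effective. The only delicate piece is the elliptic term, but for fixed small $m$ it is a finite sum over $t$ with $t^2 < 4m$ (so $|t|\le 2$ for $m=2$ and $|t|\le 3$ for $m=4$, with the corresponding ranges for the $T_{m^2}$ contributions), each summand a product of $P_k(t,\mu)=\tfrac{\rho^{\,k-1}-\bar\rho^{\,k-1}}{\rho-\bar\rho}$, a Hurwitz class number $h_w\!\big(\tfrac{t^2-4\mu}{f^2}\big)$, and a local weighting at the primes dividing $N$. Since only a handful of discriminants occur I can substitute their exact Hurwitz class numbers, bound $|P_k(t,\mu)|\le \tfrac{2\mu^{(k-1)/2}}{\sqrt{4\mu-t^2}}$, and bound the level factor by $2^{\omega(N)}$; the hyperbolic term (for $\gcd(m,N)=1$ and $N$ odd) and the weight-two correction are bounded trivially by an explicit function of $k$ times $\sigma_0(N)$. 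Combining these with the lower bound for $\psi^\nw(N)$ gives an inequality of the form ``if $\psi^\nw(N)$ and $k$ exceed explicit thresholds, then $a_2^\nw(2,N,k)<0$ (resp.\ $a_2^\nw(4,N,k)>0$)''. This confines the possible vanishing to an explicit finite set of pairs $(N,k)$, each of which I check by computing the Hecke polynomial on $S_k^\nw(\Gamma_0(N))$ directly (via the trace formula or a standard modular-symbols package), obtaining exactly the pairs in the statement.

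The main obstacle is keeping the effective threshold small enough to finish the verification. Naively combining $h_w(D)\ll\sqrt{|D|}\log|D|$ with the divisor bound $2^{\omega(N)}$ pushes the cutoff far beyond what is feasible, so the real work is: (i) exploiting that only a few discriminants appear for $m=2,4$ so one inserts their exact class numbers rather than an asymptotic bound; (ii) controlling the ratio $\psi^\nw(N)/2^{\omega(N)}$ — worst when $N$ is a product of the smallest odd primes — sharply enough that the crossover occurs early; and (iii) handling the oscillation of $P_k(t,\mu)$ in $k$, which one cannot always afford to bound by absolute values, and which may force treating $k=2$, and perhaps a few further small weights, as separate cases. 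Once the threshold is small, the finite check over $(N,k)$ is routine.
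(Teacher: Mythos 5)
Your outline coincides with the paper's proof: the same Newton-identity reduction, the same Atkin--Lehner--Li/M\"obius step (your $\mu*\mu$ is exactly the paper's multiplicative $\beta$), the same isolation of the $\psi^\nw(N)$ main term with explicit class-number and $2^{\omega(N)}$ bounds, and the same finish-by-finite-computation strategy. Note that the paper does in fact afford to bound $U_{k-1}(t,m)$ by absolute values uniformly in $k\ge 2$ (see the proof of Corollary \ref{cor:A2-new-bound} and Lemmas \ref{lem:T2-square-bound}--\ref{lem:T16-A116-bound}), so no separate treatment of small weights is needed; the effective thresholds obtained are about $1.2\times10^{10}$ and $10^7$, which are large but tractable.
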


The paper is organized as follows. In Section \ref{sec:coefficient-formula}, we express $a_2^\nw(m, N, k, \chi)$ in terms of traces of Hecke operators and state the Eichler-Selberg trace formula for the new subspace. Then in Section \ref{sec:Ai-lincomb-multiplicative}, we bound each of the terms appearing in the Eichler-Selberg trace formula for $\Tr T_m^\nw(N, k,\chi)$. In Section \ref{sec:main-thm}, we prove the trivial character case of Theorem \ref{thm:main-theorem} using the bounds obtained in Section \ref{sec:Ai-lincomb-multiplicative}. In Section \ref{sec:T2}, we prove Theorem \ref{thm:m=2-4-Theorem}, determining the complete list of pairs $(N, k)$ for which $a_2^\nw(2, N, k)$ and $a_2^\nw(4, N, k)$  vanish. In Section \ref{sec:character-case}, we prove the general character case of Theorem \ref{thm:main-theorem}.
Finally, in Section \ref{sec:discussion}, we discuss these results, giving some additional motivation and surveying potential future work.

\section{The Second Coefficient Formula} \label{sec:coefficient-formula}

In the manner of \cite[Proposition 2.1]{clayton-et-al} and \cite[Lemma 2.1]{ross-xue}, we develop a formula for $a_2^\nw(m,N,k,\chi)$ in terms of traces of Hecke operators.
\begin{lemma} \label{lem:a2-coeff-formula}
    For convenience, let $T_m^{\textnormal{new}}$ denote $T_m^{\nw}(N,k,\chi)$. Then 
    \begin{align}     
    a_2^{\nw}(m,N,k,\chi) = \frac{1}{2} \lrb{ \left(\Tr T_m^{\nw}\right)^2 - \sum_{d \mid m} \chi(d) d^{k-1} \Tr T_{m^2/d^2}^{\nw} }. 
    \end{align}
\end{lemma}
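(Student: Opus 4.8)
The plan is to identify $a_2^\nw(m,N,k,\chi)$ as (up to sign) the second elementary symmetric polynomial in the eigenvalues of $T_m^\nw$, and then convert elementary symmetric polynomials into power sums via Newton's identities. Writing $\lambda_1, \dots, \lambda_n$ for the eigenvalues of $T_m^\nw := T_m^\nw(N,k,\chi)$ (with multiplicity), the defining expansion of the Hecke polynomial gives $a_2^\nw(m,N,k,\chi) = e_2(\lambda_1,\dots,\lambda_n) = \sum_{i<j}\lambda_i\lambda_j$. Newton's identity relates this to power sums: $e_2 = \tfrac12\left(p_1^2 - p_2\right)$, where $p_1 = \sum_i \lambda_i = \Tr T_m^\nw$ and $p_2 = \sum_i \lambda_i^2$. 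So the whole content reduces to showing $\sum_i \lambda_i^2 = \sum_{d\mid m}\chi(d)d^{k-1}\Tr T_{m^2/d^2}^\nw$.

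First I would recall the Hecke multiplication law on the new subspace. Since $T_m^\nw$ is the restriction of $T_m(N,k,\chi)$ to the Hecke-stable subspace $S_k^\nw(\Gamma_0(N),\chi)$, and the Hecke operators $T_r$ for $r$ coprime to $N$ satisfy the standard relation
\begin{equation}
T_a T_b = \sum_{d \mid \gcd(a,b)} \chi(d)\, d^{k-1} T_{ab/d^2},
\end{equation}
this relation holds verbatim after restriction to $S_k^\nw(\Gamma_0(N),\chi)$. Taking $a = b = m$ gives
\begin{equation}
\left(T_m^\nw\right)^2 = \sum_{d \mid m} \chi(d)\, d^{k-1}\, T_{m^2/d^2}^\nw,
\end{equation}
where I use that $\gcd(m,m) = m$ and that $m$ is coprime to $N$ so every $T_{m^2/d^2}$ appearing here is a genuine Hecke operator preserving the new subspace. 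Now take traces of both sides: $\Tr\left(T_m^\nw\right)^2 = \sum_i \lambda_i^2 = p_2$, so $p_2 = \sum_{d\mid m}\chi(d)d^{k-1}\Tr T_{m^2/d^2}^\nw$, which is exactly the second sum in the claimed formula.

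Combining, $a_2^\nw(m,N,k,\chi) = e_2 = \tfrac12(p_1^2 - p_2) = \tfrac12\left[(\Tr T_m^\nw)^2 - \sum_{d\mid m}\chi(d)d^{k-1}\Tr T_{m^2/d^2}^\nw\right]$, as desired. I would mention that this mirrors \cite[Proposition 2.1]{clayton-et-al} and \cite[Lemma 2.1]{ross-xue}, the only difference being that one works throughout on the new subspace, which is legitimate precisely because $S_k^\nw(\Gamma_0(N),\chi)$ is stable under all $T_r$ with $\gcd(r,N)=1$ and the Hecke relation is an operator identity that therefore descends to the restriction.

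The only genuine subtlety — and the step I would be most careful about — is the bookkeeping on when the Hecke relation is valid: it requires $\gcd(m,N) = 1$ (guaranteed by hypothesis) so that $T_m, T_{m^2}, T_{m^2/d^2}, \dots$ are all the "good" Hecke operators for which the Euler product / convolution identity holds, and one should note that the character $\chi$ enters the relation with argument $d$, matching the statement. There is no analytic or asymptotic input here at all; it is purely the combinatorics of symmetric functions plus the Hecke algebra relation, so once those two ingredients are in place the proof is complete. I would also remark that $n = \dim S_k^\nw(\Gamma_0(N),\chi)$ may be $0$ or $1$, in which case both sides are understood to vanish (empty sum for $e_2$), so the identity holds trivially and no case distinction is needed.
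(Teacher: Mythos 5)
Your proof is correct and follows essentially the same route as the paper: identify $a_2^\nw$ as the second elementary symmetric polynomial in the eigenvalues, rewrite via $e_2 = \tfrac12(p_1^2 - p_2)$, and then apply the Hecke composition law (the paper cites \cite[Theorem 10.2.9]{cohen-stromberg}) to expand $\Tr (T_m^\nw)^2$. The only cosmetic difference is that you name the symmetric-function step as Newton's identity; the substance is identical.
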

\begin{proof}
    Let $\lambda_1, \ldots , \lambda_n$ be the eigenvalues of $T_m^\nw$. Then by the definition of the characteristic polynomial and the Hecke operator composition formula \cite[Theorem 10.2.9]{cohen-stromberg},  we have 
    \begin{align}
        a_2^\nw(m,N,k,\chi)
        &= \sum_{1 \leq i < j \leq n} \lambda_i \lambda_j \\
        &= \frac12 \lrb{ \lrp{ \sum_{1 \leq i \leq n} \lambda_i }^2 - \sum_{1 \leq i \leq n} \lambda_i^2 } \\
        &= \frac12 \lrb{ (\Tr T_m^\nw)^2 - \Tr (T_m^\nw)^2 } \\
        &= \frac12 \lrb{ (\Tr T_m^\nw)^2 - \sum_{d \mid m} \chi(d) d^{k-1} \Tr T_{m^2/d^2}^\nw },
    \end{align} 
as desired.
\end{proof}

We now state the Eichler-Selberg trace formula. 
\begin{lemma}[{\cite[pp.~370-371]{knightly-li}, \cite[Theorem 12.4.11]{cohen-stromberg}}]

\label{lemma:eichler-selberg-trace-formula}
Let $m \geq 1$, $N \geq 1$, $k \geq 2$, and $\chi$ be a Dirichlet character modulo $N$ such that $\chi(-1) = (-1)^k$.  
Then
\begin{equation} \label{eqn:eichler-selberg-trace-formula}
\Tr T_m(N,k,\chi) = A_1(m,N,k,\chi) - A_2(m,N,k,\chi) - A_3(m,N,k,\chi) + A_4(m,N,k,\chi)
\end{equation}
where
\begin{align}
    \label{eqn:A1m-formula}
    A_1(m,N,k,\chi) &= \chi(\sqrt{m}) \frac{k-1}{12} \psi (N) m^{k/2-1},  \\
   \label{eqn:A2m-formula}
   A_2(m,N,k,\chi) &= \frac{1}{2} \sum_{t^2 < 4m} U_{k-1}(t,m) \sum_{n}  h_w \lrp{ \frac{t^2-4m}{n^2} } \mu_{t,n,m}(N), \\
   \label{eqn:A3m-formula}
   A_3(m,N,k,\chi) &= \frac{1}{2} \sum_{d|m} \min(d,m/d)^{k-1} \sum_{\tau} \phi ( \gcd(\tau, N / \tau )) \chi(y_{\tau}), \\
   \label{eqn:A4m-formula}
   A_4(m,N,k,\chi) &= \begin{dcases} 
      \sum_{\substack{c|m \\ (N, m/c) = 1}} c & \text{if $k=2$ and $\chi = \chi_0$},\\
      0 & \text{if $k>2$ or $\chi \neq \chi_0$} .
   \end{dcases}
\end{align}
Here, we have the following notation.
\begin{itemize}
  \item $\chi(\sqrt{m})$ is interpreted as $0$ if $m$ is not a perfect square. 
  
  \item $\psi (N) = \lrb{\, \SL{2}(\ZZ) \, : \, \Gamma_0 (N) \,} = N \prod_{p|N} \lrp{ 1 + \frac{1}{p} }.$ 
  
  \item The outer summation for $A_2(m,N,k,\chi)$ runs over all $t \in \ZZ$ such that $t^2 < 4m$. Note that the terms corresponding to $t=t_0$ and $t=-t_0$ coincide.

  \item $U_{k-1}(t,m)$ denotes the Lucas sequence of the first kind. In particular, $U_{k-1}(t,m) = \frac{\rho^{k-1} - \bar{\rho}^{k-1}}{\rho - \bar{\rho}}$ where $\rho, \bar{\rho}$ are the two roots of the polynomial $x^2-tx+m$.
  
  \item The inner summation for $A_2(m,N,k,\chi)$ runs over all positive integers $n$ such that $n^2 \, | \, (t^2 -4m)$ and $\frac{t^2-4m}{n^2} \equiv 0,1 \pmod{4}$. 

  \item $h_w \lrp{ \frac{t^2-4m}{n^2} }$ is the weighted class number of the imaginary quadratic order with discriminant $\frac{t^2-4m}{n^2}$. This is the usual class number divided by $2$ or $3$ if the discriminant is $-4$ or $-3$, respectively. These are given explicitly in Table \ref{table:weighted-class-numbers} below. 

  \item $\displaystyle \mu_{t,n,m}(N) = \frac{\psi (N)}{\psi (N/\gcd(N,n))} \sideset{}{'} \sum_{c \!\! \mod N} \chi(c)$, where the primed summation runs over all elements $c$ of $(\ZZ / N \ZZ)^{\times}$ which lift to solutions of $x^2-tx+m \equiv 0 \pmod{N \cdot \gcd(N,n)}$. Note that $\mu_{t,n,m}(N)$ is a multiplicative function of $N$ \cite[Proposition 26.41]{knightly-li}.

  \item The outer summation for $A_3(m,N,k,\chi)$ runs over all positive divisors $d$ of $m$. Note that the terms corresponding to $d=d_0$ and $d=m/d_0$ coincide.
  
  \item The inner summation for $A_3(m,N,k,\chi)$ runs over all positive divisors $\tau$ of $N$ such that $\gcd(\tau, N/\tau)$ divides $\gcd(N/\condf(\chi), d-m/d)$. Here, $\condf(\chi)$ is the conductor of $\chi$.

  \item $\phi$ is the Euler totient function.

  \item $y_{\tau}$ is the unique integer modulo $\lcm(\tau, N/\tau)$ determined by the congruences $y_{\tau} \equiv d \pmod{\tau}$ and $y_{\tau} \equiv \frac{m}{d} \pmod{\frac{N}{\tau}}$.

  \item $\chi_0$ denotes the trivial character modulo $N$.

  \item Throughout, $\chi$ is a character modulo $N$, so $\chi(a) = 0$ if $\gcd(a,N) > 1$, even in the trivial character case.
  
\end{itemize}
\end{lemma}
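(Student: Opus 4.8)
Since the Eichler--Selberg trace formula is a classical result, the plan is to follow the derivations in the cited references (Knightly--Li and Cohen--Stromberg, going back to work of Eichler, Selberg, and Zagier). First I would realize $T_m$ as a double-coset operator: for $\gcd(m,N)=1$ one writes $T_m f = m^{k/2-1}\sum_{\gamma}\chi(a_\gamma)\,f|_k\gamma$, with $\gamma$ running over representatives of $\Gamma_0(N)\backslash\Delta_m(N)$, where $\Delta_m(N)$ is the set of integral determinant-$m$ matrices with lower-left entry divisible by $N$ and upper-left entry $a_\gamma$ coprime to $N$ (one may take the $\gamma$ upper triangular). The trace is then computed by a Selberg-style fixed-point argument: build a reproducing kernel for $S_k(\Gamma_0(N),\chi)$ (from the resolvent of the weight-$k$ Laplacian, or purely algebraically via Eichler--Shimura and a Lefschetz computation on the cohomology of the modular curve) and unfold $\Tr T_m$ into a sum $\sum_M \mathcal O(M)$ over $\Gamma_0(N)$-conjugacy classes of determinant-$m$ matrices, where the orbital term $\mathcal O(M)$ depends only on the characteristic polynomial $x^2-tx+m$, $t=\operatorname{tr}M$.

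The four summands $A_1,\dots,A_4$ then arise as the four types of conjugacy class, organized by the sign of $t^2-4m$. A central class ($t^2=4m$, $M$ scalar, possible only when $m$ is a square) contributes $\chi(\sqrt m)m^{k/2-1}\dim S_k(\Gamma_0(N),\chi)$, whose leading part is $A_1=\chi(\sqrt m)\frac{k-1}{12}\psi(N)m^{k/2-1}$ (the bounded-in-$k$ dimension corrections get absorbed into the other $A_i$). The elliptic classes ($t^2<4m$) give $A_2$: the archimedean factor at a fixed point in $\BH$ equals $\frac{\rho^{k-1}-\bar\rho^{k-1}}{\rho-\bar\rho}=U_{k-1}(t,m)$, while counting these conjugacy classes at level $N$ amounts to counting ideal classes of the imaginary quadratic order of discriminant $(t^2-4m)/n^2$ (the weighted class numbers $h_w$), subject to the congruence $x^2-tx+m\equiv 0\pmod{N\gcd(N,n)}$ (the multiplicative factor $\mu_{t,n,m}(N)$). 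The hyperbolic classes ($t^2>4m$, rational eigenvalues $d,m/d$ with $d\mid m$, $d+m/d=t$), together with the split-parabolic boundary case $d=m/d$, give $A_3$, with weight factor $\min(d,m/d)^{k-1}$ and level count $\sum_\tau\phi(\gcd(\tau,N/\tau))\chi(y_\tau)$. Finally, the parabolic/cuspidal contribution vanishes unless $k=2$ and $\chi=\chi_0$, in which case it leaves the Eisenstein correction $A_4=\sum_{c\mid m,\,(N,m/c)=1}c$.

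The main obstacle will be twofold. First, convergence and regularization: the kernel series converges only conditionally in small weight, and in weight $2$ the naive trace genuinely diverges, so one must subtract the Eisenstein part before tracing --- this is precisely the origin of $A_4$ and of the $k\ge2$ / weight-$2$ caveat. Second, the local (prime-by-prime) bookkeeping at level $N$: because one counts $\Gamma_0(N)$-conjugacy classes rather than matrices, the class numbers $h_w$, the factors $\phi(\gcd(\tau,N/\tau))$, and the multiplicative functions $\mu_{t,n,m}(N)$ all appear, and verifying their stated descriptions (in particular the multiplicativity of $\mu_{t,n,m}$ and the divisibility condition in the inner sum of $A_3$) is a Hensel-lifting computation carried out in full in Knightly and Li~\cite[Proposition 26.41]{knightly-li}. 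Since all of this is entirely standard, I would simply invoke the cited references rather than reproduce the computation.
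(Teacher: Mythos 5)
Your proposal is fine: the paper itself offers no proof of this lemma, simply citing Knightly--Li and Cohen--Stromberg, and your sketch accurately outlines the standard Zagier/Selberg kernel derivation (decomposing by central, elliptic, hyperbolic, and parabolic conjugacy classes, with the weight-$2$ Eisenstein regularization producing $A_4$) before deferring to the same references. One small imprecision worth flagging: in the actual derivation the identity-class contribution is computed directly as the integral $\frac{k-1}{12}\psi(N)m^{k/2-1}\chi(\sqrt m)$, rather than being ``$\chi(\sqrt m)m^{k/2-1}\dim S_k$ with corrections absorbed into the other $A_i$'' --- indeed the dimension formula is usually \emph{deduced} from the $m=1$ case of the trace formula, so that description is slightly circular; since you ultimately invoke the references this does not affect the argument.
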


We also give a table of the weighted class numbers $h_w(n)$ used in the Eichler-Selberg trace formula. We will need these values later in Section \ref{sec:T2} when we compute explicit bounds for the various trace formula terms. 
\begin{table}[h!] 
\begin{equation}
\begin{array}{|c|c|c|c|c|c|c|c|c|c|c|c|}
\hline
 n & -3 & -4 & -7 & -8 & -11 & -12 & -15 & -16 & -19 & -20 & -23 \\
\hline
 h_w(n) & \frac{1}{3} & \frac{1}{2} & 1 & 1 & 1 & 1 & 2 & 1 & 1 & 2 & 3 \\
 \hline \hline
 n & -24 & -27 & -28 & -31 & -32 & -35 & -36 & -39 & -40 & -43 & -44 \\
 \hline
 h_w(n) & 2 & 1 & 1 & 3 & 2 & 2 & 2 & 4 & 2 & 1 & 3 \\   
 \hline \hline 
 n & -47 & -48 & -51 & -52 & -55 & -56 & -59 & -60 & -63 & -64 & -67 \\
 \hline
 h_w(n) & 5 & 2 & 2 & 2 & 4 & 4 & 3 & 2 & 4 & 2 & 1 \\ 
 \hline
\end{array}
\end{equation}
\caption{The weighted class numbers $h_w(n)$ \cite[p.~345]{knightly-li}, \cite[A014600]{oeis}.}
\label{table:weighted-class-numbers}
\end{table}

Next, we similarly have the following trace formula for $T_m^\nw(N,k,\chi)$.

\begin{lemma}[{\cite[Theorem 13.5.7 for $\gcd(m,N)=1$]{cohen-stromberg}}] \label{lemma:beta}
Let $m \geq 1, N \geq 1$ be coprime to $m$, $k \geq 2$, and $\chi$ be a Dirichlet character modulo $N$ such that $\chi(-1) = (-1)^k$. Recall that $\condf(\chi)$ denotes the conductor of $\chi$. Let $\beta$ be the multiplicative function defined on prime powers $p^r$ by 
\begin{equation}
    \beta(p^r) = 
    \begin{dcases}
        -2 & \text{if } r = 1, \\
        1 & \text{if } r = 2, \\
        0 & \text{if } r \geq 3.
    \end{dcases}
\end{equation}
Then
\begin{equation} \label{eqn:cohen-stromberg-new-trace-formula}
    \Tr T_m^{\nw}(N,k,\chi) = \sum_{\condf(\chi) \mid M \mid N} \beta\lrp{\frac{N}{M}} \cdot \Tr T_m(M,k,\chi).
\end{equation}
\end{lemma}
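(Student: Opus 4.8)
The plan is to prove Lemma~\ref{lemma:beta} by Möbius-type inversion of the Atkin--Lehner--Li oldform/newform decomposition. First I would recall the degeneracy (level-raising) maps: for each divisor $L \mid N/M$ with $\condf(\chi) \mid M$, there is an injection $S_k(\Gamma_0(M),\chi) \to S_k(\Gamma_0(N),\chi)$ sending $f(z) \mapsto f(Lz)$, and the Atkin--Lehner--Li theory gives the direct sum decomposition
\begin{equation}
S_k(\Gamma_0(N),\chi) = \bigoplus_{\condf(\chi) \mid M \mid N} \; \bigoplus_{L \mid N/M} \iota_L\bigl(S_k^{\nw}(\Gamma_0(M),\chi)\bigr),
\end{equation}
which is compatible with $T_m$ for $\gcd(m,N)=1$ (here the coprimality of $m$ and $N$ is essential so that $T_m$ commutes with all the degeneracy maps and preserves each piece, acting on $\iota_L(S_k^{\nw}(M,\chi))$ with the same eigenvalues as on $S_k^{\nw}(M,\chi)$). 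Taking traces, this yields
\begin{equation}
\Tr T_m(N,k,\chi) = \sum_{\condf(\chi) \mid M \mid N} \sigma_0\!\lrp{\tfrac{N}{M}} \, \Tr T_m^{\nw}(M,k,\chi),
\end{equation}
where $\sigma_0(n) = \sum_{L \mid n} 1$ counts the divisors.

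Next I would invert this relation. Both sides are functions of the divisor lattice of $N$ (with $M$ ranging over multiples of $\condf(\chi)$ dividing $N$), and the kernel $\sigma_0$ is multiplicative; so the inverse kernel $\beta$ is the Dirichlet-convolution inverse of $\sigma_0$, i.e. $\beta = \mu * \mu$ where $\mu$ is the Möbius function, since $\sigma_0 = \mathbf{1} * \mathbf{1}$. One checks $\beta$ is multiplicative and computes on prime powers: $\beta(1)=1$, $\beta(p) = -2$, $\beta(p^2) = 1$, and $\beta(p^r) = 0$ for $r \geq 3$, matching the definition in the statement. Applying this inversion to the trace identity above gives
\begin{equation}
\Tr T_m^{\nw}(N,k,\chi) = \sum_{\condf(\chi) \mid M \mid N} \beta\!\lrp{\tfrac{N}{M}} \, \Tr T_m(M,k,\chi),
\end{equation}
as claimed. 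I would note that the divisibility $\condf(\chi) \mid M$ is automatically respected: if $\condf(\chi) \mid N$ but $\condf(\chi) \nmid M$ then $S_k^{\nw}(\Gamma_0(M),\chi) = 0$, so those terms drop out on both sides and the inversion is carried out over the sublattice of multiples of $\condf(\chi)$.

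Since this lemma is quoted directly from Cohen--Stromberg \cite[Theorem 13.5.7]{cohen-stromberg}, the cleanest write-up is simply to cite that reference and perhaps spell out the $\beta = \mu * \mu$ computation for the reader's convenience; I would not reprove the Atkin--Lehner--Li decomposition from scratch. The main subtlety to be careful about is bookkeeping with the conductor condition and verifying that the degeneracy maps land in the correct character space with the correct Hecke action — but for $\gcd(m,N)=1$ this is standard, and the only genuine computation is the elementary verification that the multiplicative function inverting $\sigma_0$ under Dirichlet convolution is exactly $\beta$ as defined.
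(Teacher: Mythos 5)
The paper does not prove this lemma; it cites it directly from Cohen--Stromberg \cite[Theorem 13.5.7]{cohen-stromberg}. Your reconstruction is correct and is indeed the standard derivation: the Atkin--Lehner--Li decomposition $S_k(\Gamma_0(N),\chi) = \bigoplus_{\condf(\chi)\mid M\mid N}\bigoplus_{L\mid N/M}\iota_L\bigl(S_k^{\nw}(\Gamma_0(M),\chi)\bigr)$ gives $\Tr T_m(N,k,\chi) = \sum_{\condf(\chi)\mid M\mid N}\sigma_0(N/M)\,\Tr T_m^{\nw}(M,k,\chi)$ once one checks that $\gcd(m,N)=1$ ensures $T_m$ stabilizes each embedded piece with the same trace, and then the result follows by inverting $\sigma_0 = \mathbf{1}*\mathbf{1}$ with $\beta = \mu*\mu$ over the divisor sublattice $\{M : \condf(\chi)\mid M\mid N\}$, which you correctly observe is order-isomorphic to the divisor lattice of $N/\condf(\chi)$. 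Your verification that $\beta(p)=-2$, $\beta(p^2)=1$, $\beta(p^r)=0$ for $r\geq 3$ matches the definition in the statement, and your closing remark — that a citation plus the $\beta=\mu*\mu$ observation is the cleanest write-up — matches exactly what the paper chose to do.
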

We will use this formula to study the second coefficient $a_2^\nw(m,N,k,\chi)$. In Sections \ref{sec:Ai-lincomb-multiplicative} - \ref{sec:T2}, we restrict to the case of trivial character. Then in Section \ref{sec:character-case}, we will extend our arguments to the case of general character. In the case of trivial character, we can reduce \eqref{eqn:cohen-stromberg-new-trace-formula} to
\begin{equation} \label{eqn:reduced-new-trace-formula}
    \Tr T_m^\nw(N,k) = \sum_{M \mid N} \beta\lrp{\frac{N}{M}} \cdot \Tr T_m(M,k).
\end{equation}
Following the notation of Serre \cite{serre}, we apply \eqref{eqn:reduced-new-trace-formula} to the Eichler-Selberg trace formula and write 
\begin{equation} \label{eqn:new-eichler-selberg-trace-formula}
    \Tr T_m^\nw(N,k) = A_1^\nw(m,N,k) - A_2^\nw(m,N,k) - A_3^\nw(m,N,k) + A_4^\nw(m,N,k),
\end{equation}
where
\begin{equation} \label{eqn:Ainew-equals-beta-convolution-Ai}
    A_i^\nw(m,N,k) = \sum_{M \mid N}\beta\lrp{\frac{N}{M}} \cdot A_i(m, M,k).
\end{equation}

\section{Bounding the \texorpdfstring{$A_i^{\nw}(m,N,k)$}{Ainew(m,N,k)}}

\label{sec:Ai-lincomb-multiplicative}

In this section, we write each $A_{i}(m,N,k)$ term from the Eichler-Selberg trace formula as a linear combination of multiplicative functions $f(N)$. This will allow us to rewrite \eqref{eqn:Ainew-equals-beta-convolution-Ai} as a linear combination of Dirichlet convolutions of the form $\beta * f$. We then use these convolutions to give explicit bounds for each of the $A_{i}^\nw(m,N,k)$ terms from \eqref{eqn:new-eichler-selberg-trace-formula}. We also give the asymptotic behavior of these terms. This asymptotic behavior will be stated using big-$O$ notation with respect to $N$ and $k$.

The Dirichlet convolution 
\begin{align}
\beta * f(N) = \sum_{M \mid N} \beta\lrp{\frac{N}{M}} \cdot f(M)
\end{align}
can be computed by the following formula.
\begin{lemma} \label{lemma:dirichlet-convolution}
    Let $f$ be a multiplicative function (not identically zero) and $\beta * f$ denote the Dirichlet convolution of $\beta$ with $f$. Then $\beta * f$ is the multiplicative function defined on prime powers by 
    $$ (\beta * f) (p^r) = \begin{dcases}
        f(p)-2, & \text{if } r=1, \\
        f(p^{r})-2f(p^{r-1})+f(p^{r-2}) & \text{if } r \geq 2.
    \end{dcases} $$
\end{lemma}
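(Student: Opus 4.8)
The plan is to invoke the standard fact that the Dirichlet convolution of two multiplicative functions is again multiplicative, so that $\beta * f$ is completely determined by its values on prime powers, and then to read off those values directly from the defining formula for $\beta$.

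First I would observe that, since $f$ is multiplicative and not identically zero, $f(1) = 1$, and likewise $\beta(1) = 1$ by convention. Both $\beta$ and $f$ are multiplicative, hence so is $\beta * f$: if $\gcd(a,b) = 1$, every divisor of $ab$ factors uniquely as a product of a divisor of $a$ and a divisor of $b$, and applying multiplicativity of $\beta$ and $f$ to each such factorization gives $(\beta * f)(ab) = (\beta * f)(a)\,(\beta * f)(b)$. Thus it suffices to compute $(\beta * f)(p^r)$ for a prime $p$ and $r \geq 1$.

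Next, I would write the convolution at a prime power as
$$ (\beta * f)(p^r) = \sum_{i=0}^{r} \beta(p^i)\, f(p^{r-i}), $$
and use that $\beta(p^i) = 0$ for all $i \geq 3$, so that only the terms $i = 0, 1, 2$ survive. Substituting $\beta(1) = 1$, $\beta(p) = -2$, and $\beta(p^2) = 1$ yields $(\beta * f)(p^r) = f(p^r) - 2 f(p^{r-1}) + f(p^{r-2})$ for $r \geq 2$, while for $r = 1$ the $i = 2$ term is absent and one gets $(\beta * f)(p) = f(p) - 2 f(1) = f(p) - 2$. This is exactly the asserted formula, completing the proof.

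There is no serious obstacle here; the only points requiring a moment of care are recording which values $\beta(p^i)$ vanish (everything with exponent at least $3$) and handling the edge case $r = 1$ separately, where the $f(p^{r-2})$ term would involve a negative exponent and must instead be dropped.
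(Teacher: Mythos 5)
Your proof is correct and is exactly the "direct from the definition" computation the paper has in mind (the paper simply asserts the formula follows directly from the definition of $\beta$ without spelling it out). You fill in the routine details — multiplicativity of the convolution, the vanishing of $\beta(p^i)$ for $i \geq 3$, and the $r=1$ edge case — in the standard way.
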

This formula follows directly from the definition of $\beta$ given in Lemma \ref{lemma:beta}.


\subsection{Bounding \texorpdfstring{$A_1^\nw(m,N,k)$}{A1new(m,N,k)}}

Recall that 
\begin{equation} \label{eqn:A1-trivial-char}
    A_1(m,N,k) = \chi_0(\sqrt{m}) \frac{k-1}{12} \psi (N) m^{k/2-1},
\end{equation}
where
\begin{equation} \label{eqn:psi-formula}
    \psi(N) = N\prod_{p \mid N}\lrp{1+\frac{1}{p}}.
\end{equation}
Observe that as a function of $N$, $A_1(m,N,k)$ is a multiple of the multiplicative function $\psi(N)$. We now use Lemma \ref{lemma:dirichlet-convolution} to give a lower bound on the convolution $\beta * \psi$.
\begin{lemma} \label{lem:psi-new}
    Let $\psi^\nw := \beta * \psi$. 
    Then
    \begin{align}
        \psi^\nw(N) \geq \frac{N}{\pi_1(N)},
    \end{align}
    where
    \begin{equation} \label{eqn:pi-3-def}
        \pi_1(N) = \prod_{p \mid N} \lrp{1 + \frac{p+1}{p^2-p-1}}.
    \end{equation}
\end{lemma}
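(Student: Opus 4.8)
The plan is to reduce the inequality to a check on prime powers by exploiting multiplicativity, and then to compute $\psi^\nw=\beta*\psi$ explicitly there. First I would apply Lemma~\ref{lemma:dirichlet-convolution} with $f=\psi$. Since $\psi(1)=1$ and $\psi(p^j)=p^{j-1}(p+1)$ for $j\geq 1$, the formula in that lemma yields
\begin{equation}
\psi^\nw(p^r)=
\begin{dcases}
p-1 & r=1,\\
p^2-p-1 & r=2,\\
p^{r-3}(p+1)(p-1)^2 & r\geq 3.
\end{dcases}
\end{equation}
(The $r=2$ case is where the constant term $\psi(1)=1$ enters and, as we will see, where the bound is sharp.)

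Next I would note that $\pi_1(N)$ depends only on the set of primes dividing $N$, so $N/\pi_1(N)$ is a multiplicative function of $N$; and $\psi^\nw=\beta*\psi$ is multiplicative by Lemma~\ref{lemma:dirichlet-convolution} (with $\psi$ not identically zero). Hence it suffices to prove $\psi^\nw(p^r)\geq p^r/\pi_1(p^r)$ for every prime $p$ and every $r\geq 1$. Since $\pi_1(p^r)=1+\frac{p+1}{p^2-p-1}=\frac{p^2}{p^2-p-1}$, the desired inequality is equivalent to
\begin{equation}
\psi^\nw(p^r)\geq p^{r-2}(p^2-p-1).
\end{equation}

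Then I would dispatch the three cases by elementary algebra. For $r=1$ the inequality reads $p-1\geq p-1-\tfrac1p$; for $r=2$ it is the equality $p^2-p-1=p^2-p-1$; and for $r\geq 3$, cancelling $p^{r-3}$ reduces it to $(p+1)(p-1)^2\geq p(p^2-p-1)$, i.e.\ $p^3-p^2-p+1\geq p^3-p^2-p$, which clearly holds. Multiplying these prime-power inequalities over the prime-power factors of $N$ gives $\psi^\nw(N)\geq N/\pi_1(N)$.

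I do not expect a genuine obstacle in this lemma; the only points requiring care are computing $(\beta*\psi)(p^2)$ correctly (where the sharp case occurs, so any slip would break the bound) and confirming that $N/\pi_1(N)$ is truly multiplicative so that the reduction to prime powers is legitimate. Everything else is a short polynomial comparison.
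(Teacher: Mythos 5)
Your proposal is correct and follows essentially the same route as the paper: apply Lemma~\ref{lemma:dirichlet-convolution} to compute $\psi^\nw(p^r)$ in the three cases $r=1$, $r=2$, $r\geq 3$, and then verify the bound at prime powers by elementary algebra (the paper phrases the final comparison as $\psi^\nw(p^r)\geq p^r(1-\tfrac1p-\tfrac1{p^2})$, which is the same inequality you write as $\psi^\nw(p^r)\geq p^{r-2}(p^2-p-1)$). Your explicit remark that $N/\pi_1(N)$ is multiplicative, so the reduction to prime powers is legitimate, is a point the paper leaves implicit but is worth making.
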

\begin{proof}
    Let $p\mid N$ be prime. Applying Lemma \ref{lemma:dirichlet-convolution} to $\psi^\nw$ yields
    \begin{align}
        \psi^\nw(p) &= \psi(p) -2 = p -1, \\
        \psi^\nw(p^2) &= \psi(p^2) -2\psi(p) + 1 = p^2 -p -1 \\
        \psi^\nw(p^r) &= \psi(p^r) -2\psi(p^{r-1}) + \psi(p^{r-2}) \\ 
        &= \lrp{p^r-2p^{r-1}+p^{r-2}} \lrp{1+\frac1p} \\ 
        &= p^r -p^{r-1} -p^{r-2} + p^{r-3} \qquad \text{for } r \geq 3.
    \end{align}
Observe that in each of these three cases, 
\begin{align}
    \psi^\nw(p^r) &\geq p^r \lrp{1 - \frac1p - \frac{1}{p^2}} = \frac{p^r}{\lrp{\frac{p^2}{p^2-p-1}}} = \frac{p^r}{\lrp{1 + \frac{p+1}{p^2-p-1}}},
\end{align} 
verifying the desired result. 
\end{proof}

We can then employ \eqref{eqn:A1-trivial-char} to write $A_1^\nw(m,N,k)$ as
\begin{align}
    A_1^\nw (m,N,k) &= \sum_{M \mid N} \beta\lrp{\frac{N}{M}} \cdot A_1 (m,M,k) \\
    &= \chi_0(\sqrt{m}) \frac{k-1}{12} \psi^\nw(N) m^{k/2-1}. \label{eqn:A1m-new-formula}
\end{align}


\subsection{Bounding \texorpdfstring{$A_2^\nw(m,N,k)$}{A2new(m,N,k)}}

Next, recall that
\begin{equation}
    A_{2}(m,N,k) = \frac{1}{2} \sum_{t^2 < 4m} U_{k-1}(t,m) \sum_{n}  h_w \lrp{ \frac{t^2-4m}{n^2} } \mu_{t,n,m}(N).
\end{equation}
Observe that $A_2(m,N,k)$ is a linear combination of the multiplicative functions $\mu_{t,n,m}$.
We can then use Lemma \ref{lemma:dirichlet-convolution} to bound the convolution $\beta * \mu_{t,n,m}$.

\begin{lemma} \label{lem:mu-multiplicative}
    Let $m \geq 1$ and $t,n$ be as in \eqref{eqn:A2m-formula}. Define $\mu^\nw_{t,n,m} := \beta * \mu_{t,n,m}$. Then for $N$ coprime to $m$,
    $$\lrabs{\mu^\nw_{t,n,m}(N)} \leq  2^{\omega(N)} \psi(n) \,2^{\omega(4m-t^2)} \sqrt{4m-t^2}.$$
    Here, $\omega(N)$ denotes the number of distinct prime divisors of $N$.
\end{lemma}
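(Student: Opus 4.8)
The plan is to reduce the whole estimate to a computation one prime at a time. Since $\mu_{t,n,m}$ is multiplicative (as noted after Lemma~\ref{lemma:eichler-selberg-trace-formula}) and $\beta$ is multiplicative, the convolution $\mu^\nw_{t,n,m} = \beta * \mu_{t,n,m}$ is multiplicative, so $|\mu^\nw_{t,n,m}(N)| = \prod_{p^r \| N} |\mu^\nw_{t,n,m}(p^r)|$. Writing $D = 4m - t^2$, a positive integer since $t^2 < 4m$, the right-hand side of the asserted inequality also factors over primes: $2^{\omega(N)} = \prod_{p \mid N} 2$, $\psi(n) = \prod_{p \mid n} \psi(p^{v_p(n)})$, and $2^{\omega(D)}\sqrt{D} = \prod_{p \mid D} 2\sqrt{p^{v_p(D)}}$. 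So it is enough to establish, for each $p^r \| N$, the local bounds $|\mu^\nw_{t,n,m}(p^r)| \le 2$ when $p \nmid D$ and $|\mu^\nw_{t,n,m}(p^r)| \le 4\,\psi(p^{v_p(n)})\,\sqrt{p^{v_p(D)}}$ when $p \mid D$; multiplying these over the primes $p \mid N$, extracting one factor $2$ from each $4$ to form $2^{\omega(N)}$, and bounding the leftover subproducts over the primes dividing $\gcd(N,D)$ by $2^{\omega(D)}$, $\psi(n)$, and $\sqrt{D}$ respectively (using $n^2 \mid D$, so every prime dividing $n$ divides $D$), one recovers the lemma.

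To obtain the local bounds, I would unwind the definition of $\mu_{t,n,m}$ at a prime power in the trivial character case, namely $\mu_{t,n,m}(p^r) = \dfrac{\psi(p^r)}{\psi\!\left(p^{\,r - \min(r,\,v_p(n))}\right)}\,\rho(p^r)$, where $\rho(p^r)\ge 0$ is the number of $c \in (\ZZ/p^r\ZZ)^\times$ that lift to a solution of $x^2 - tx + m \equiv 0 \pmod{p^{\,r + \min(r,\,v_p(n))}}$; in particular $\mu_{t,n,m}(p^r) \ge 0$. Two elementary estimates then control everything. First, the $\psi$-ratio equals $p^{\min(r, v_p(n))}$ or $p^{\min(r, v_p(n))}(1 + 1/p)$, hence is $\le \psi(p^{v_p(n)})$. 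Second, since $\gcd(m,p) = 1$, any solution of the quadratic mod $p$ is automatically a unit, and for odd $p$ the substitution $y = 2x - t$ matches solutions mod $p^j$ with square roots of $t^2 - 4m$ mod $p^j$, of which there are at most $2\,p^{\lfloor v_p(D)/2 \rfloor} \le 2\sqrt{p^{v_p(D)}}$ (a short case check splitting on $j \le v_p(D)$ versus $j > v_p(D)$); taking $j = r + \min(r, v_p(n)) \ge r$ gives $\rho(p^r) \le 2\sqrt{p^{v_p(D)}}$. Combining, $0 \le \mu_{t,n,m}(p^r) \le 2\,\psi(p^{v_p(n)})\sqrt{p^{v_p(D)}}$ for every $r \ge 0$. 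The prime $p = 2$, which can arise only when $m$ is odd, is handled the same way: if $t$ is odd the quadratic has no root mod $2$ so $\rho(2^r) = 0$, while if $t$ is even one completes the square via $(x - t/2)^2 \equiv (t/2)^2 - m$ and counts square roots modulo $2^j$, again getting $\rho(2^r) \le 2\sqrt{2^{v_2(D)}}$.

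Now I would apply Lemma~\ref{lemma:dirichlet-convolution}. For $r \ge 2$, the non-negativity of $\mu_{t,n,m}$ shows $\mu^\nw_{t,n,m}(p^r) = \mu_{t,n,m}(p^r) + \mu_{t,n,m}(p^{r-2}) - 2\mu_{t,n,m}(p^{r-1})$ lies in $\big[-2\mu_{t,n,m}(p^{r-1}),\; \mu_{t,n,m}(p^r) + \mu_{t,n,m}(p^{r-2})\big]$, so $|\mu^\nw_{t,n,m}(p^r)| \le 2\cdot 2\,\psi(p^{v_p(n)})\sqrt{p^{v_p(D)}} = 4\,\psi(p^{v_p(n)})\sqrt{p^{v_p(D)}}$; for $r = 1$, $|\mu^\nw_{t,n,m}(p)| = |\mu_{t,n,m}(p) - 2| \le \max\!\big(\mu_{t,n,m}(p),\,2\big) \le 4\,\psi(p^{v_p(n)})\sqrt{p^{v_p(D)}}$. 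This settles every case except the sharper requirement $|\mu^\nw_{t,n,m}(p^r)| \le 2$ at primes $p \nmid D$. At such a prime $p \nmid n$ as well, and Hensel's lemma makes $\mu_{t,n,m}(p^j)$ independent of $j$ for $j \ge 1$ (with $\mu_{t,n,m}(1) = 1$); consequently $\mu^\nw_{t,n,m}(p^r) = 0$ for $r \ge 3$, while $|\mu^\nw_{t,n,m}(p)|$ and $|\mu^\nw_{t,n,m}(p^2)|$ are computed directly to be $\le 2$. (The same remark disposes of $p = 2$ when $t$ is odd, where $\mu_{t,n,m}(2^j) = 0$ for $j \ge 1$.)

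The step I expect to be the real obstacle is the constant bookkeeping in the local analysis: the naive estimate $|\mu^\nw_{t,n,m}(p^r)| \le |\mu_{t,n,m}(p^r)| + 2|\mu_{t,n,m}(p^{r-1})| + |\mu_{t,n,m}(p^{r-2})|$ is off by a factor of $2$ from what is claimed, so one must use the non-negativity of $\mu_{t,n,m}(p^r)$ to squeeze the second difference, and must treat the primes with $p \nmid t^2 - 4m$ by hand since there the target constant $2$ leaves no room. The remaining ingredients---the square-root counts modulo prime powers, the bookkeeping of the $\psi$-ratio, and reassembling the product over $p \mid N$---are routine.
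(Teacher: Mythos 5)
Your proposal is correct and follows essentially the same structure as the paper's proof: reduce to local bounds at prime powers via multiplicativity, split on $p \nmid D$ versus $p \mid D$, handle the first case by Hensel, and in the second case bound the $\psi$-ratio by $\psi(p^{v_p(n)})$ and use non-negativity of $\mu_{t,n,m}$ to squeeze the second difference down to a factor of $2$ (via $|\mu^\nw(p^r)| \leq \max(\mu(p^r) + \mu(p^{r-2}),\ 2\mu(p^{r-1}))$) rather than the naive triangle inequality. The only real difference is that where you derive the root count $\nu(p^r) \leq 2p^{\lfloor v_p(D)/2 \rfloor}$ yourself by completing the square and counting square roots modulo prime powers (with a separate case analysis for $p = 2$), the paper simply cites Huxley's bound of $2p^{v_p(D)/2}$ solutions to $x^2 - tx + m \equiv 0 \pmod{p^r}$, which handles all primes uniformly. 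Both give the same local constants and the same reassembly into $2^{\omega(N)} 2^{\omega(D)} \psi(n) \sqrt{|D|}$.
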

\begin{proof}
Recall from \eqref{eqn:A2m-formula} that 
    \begin{align}
    \mu_{t,n,m}(N) = \frac{\psi(N)}{\psi(N/\gcd(N,n))} \nu(N) \qquad \text{with} \qquad  \nu(N) = \sump_{\substack{c \mod N}} 1,
    \end{align}
    where the sum for $\nu(N)$ ranges over all  $c\in (\ZZ/N\ZZ)^\times$ that lift to a solution of the polynomial $x^2-tx+m\equiv0\mod N\cdot \gcd(N,n)$. 

    We now prove the desired bounds for $\mu^\nw_{t,n,m}$. Let $D := t^2-4m$ be the discriminant of $x^2 - tx + m$. We will show that for each prime $p$,
    \begin{align} \label{eqn:mu-new-local}
        \lrabs{\mu^\nw_{t,n,m}(p^r)} &\leq \begin{cases}
            2 & \text{if } p \nmid m,\ p \nmid D, \\
            4 p^{v_p(D)/2} \psi(p^{v_p(n)})  & \text{if } p \nmid m,\ p \mid D.
        \end{cases}
    \end{align}

    First, consider the case when $p \nmid m,\ p \nmid D$. Since $n^2 \mid D$, we also have $p \nmid n$, and so $\gcd(p^r,n) = 1$. This means that $\frac{\psi(p^r)}{\psi(p^r/\gcd(p^r,n))} = \frac{\psi(p^r)}{\psi(p^r)} = 1$. Additionally, observe that since $p \nmid m$, every  solution to $x^2 - tx + m \equiv 0 \mod p^r$ will necessarily be a unit modulo $p^r$.
    Thus, $\mu_{t,n,m}(p^r) = \nu(p^r)$ is precisely the number of solutions to the equation $x^2 - tx + m \equiv 0 \mod p^r$. And since $p \nmid D$, we have by Hensel's Lemma \cite[Chapter 2, Section 2, Proposition 2]{lang} that $\nu(p^r) = \nu(p)$ for all $r \geq 1$. Additionally, we have $\nu(p) \leq 2$ since $x^2 - tx + m $ is quadratic.
    
    Combining these observations, we obtain by Lemma \ref{lemma:dirichlet-convolution},
    \begin{align}
        \lrabs{\mu^\nw_{t,n,m}(p)} &= \lrabs{\mu_{t,n,m}(p) - 2} = \lrabs{\nu(p) - 2} \leq  2, \\
        \lrabs{\mu^\nw_{t,n,m}(p^2)} &= \lrabs{\mu_{t,n,m}(p^2) - 2 \mu_{t,n,m}(p) + 1} = \lrabs{\nu(p) - 2\nu(p) + 1} \leq  1, \\
        \lrabs{\mu^\nw_{t,n,m}(p^r)} &= \lrabs{\mu_{t,n,m}(p^r) - 2 \mu_{t,n,m}(p^{r-1}) + \mu_{t,n,m}(p^{r-2})} = \lrabs{\nu(p) - 2 \nu(p) + \nu(p)} = 0  \quad \text{for } r \geq 3.
    \end{align}
This verifies the first case of \eqref{eqn:mu-new-local}.

Next, consider the case of $p \nmid m,\  p \mid D$. Then 
    \begin{align}
        \frac{\psi(p^r)}{\psi(p^r/\gcd(p^r,n))}
        &= \begin{dcases}
            \frac{\psi(p^r)}{\psi(1)} &\text{if } r \leq v_p(n) \\
            \frac{\psi(p^r)}{\psi(p^{r-v_p(n)})} &\text{if } r > v_p(n)
        \end{dcases} \\
        &= \begin{dcases}
            \psi(p^r) \qquad\ \ \, &\text{if } r \leq v_p(n) \\
            p^{v_p(n)} \qquad\ \ \, &\text{if } r > v_p(n)
        \end{dcases} \\
        &\leq \psi(p^{v_p(n)}).
    \end{align}
    Also, observe that $\nu(p^r)$ will be bounded by the number of solutions to $x^2 - tx + m \equiv 0 \mod p^r$. We have from Huxley \cite[Page 194]{huxley} that the equation $x^2 - tx + m \equiv 0 \mod p^r$ has at most $2 p^{v_p(D)/2}$ solutions. Thus
    \begin{equation} \label{eqn:mu-bound-temp}
       \mu_{t,n,m}(p^r) =  \frac{\psi(p^r)}{\psi(p^r/\gcd(p^r,n))} \nu(N) \leq  \psi(p^{v_p(n)}) \cdot 2 p^{v_p(D)/2}. 
    \end{equation}
    This yields
    \begin{align}
        \lrabs{\mu^\nw_{t,n,m}(p^r)} &= \lrabs{\mu_{t,n,m}(p^r) - 2 \mu_{t,n,m}(p^{r-1}) + \mu_{t,n,m}(p^{r-2})} \\
        &\leq \max\lrp{\mu_{t,n,m}(p^r) + \mu_{t,n,m}(p^{r-2}),\ 2 \mu_{t,n,m}(p^{r-1})} \\
        &\leq 2 \cdot \psi(p^{v_p(n)}) \cdot 2 p^{v_p(D)/2},
    \end{align}
    where we interpret $\mu_{t,n,m}(p^{r-2})$ here as $0$ if $r = 1$. This verifies the second case of \eqref{eqn:mu-new-local}.

    Then from \eqref{eqn:mu-new-local}, for $N$ coprime to $m$, 
    $$\mu^\nw_{t,n,m}(N) \leq 2^{\omega(N)} 2^{\omega(D)} \psi(n) \sqrt{\lrabs{D}},$$
    as desired.
\end{proof}

We can then use this lemma to determine the asymptotic behavior of $A_{2}^\nw(m,N,k)$. Keep in mind that throughout this entire paper, all big-O notation is with respect to $N$ and $k$ ($m$ is just a fixed constant). 
\begin{corollary} \label{cor:A2-new-bound}
    Let $m \geq 1$ be fixed, and consider $N \geq 1$ coprime to $m$ and $k \geq 2$ even.
    Then 
    $$A_2^\nw(m, N, k) = O(m^{k/2}2^{\omega(N)}).$$
\end{corollary}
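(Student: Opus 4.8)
The plan is to bound $A_2^\nw(m,N,k)$ by combining Lemma~\ref{lem:mu-multiplicative} with the elementary estimate $|U_{k-1}(t,m)| \le k\, m^{(k-2)/2}$ that holds whenever $t^2 < 4m$. Indeed, when $t^2 < 4m$ the roots $\rho, \bar\rho$ of $x^2 - tx + m$ are complex conjugates of absolute value $\sqrt{m}$, so writing $\rho = \sqrt{m}\, e^{i\theta}$ gives $U_{k-1}(t,m) = \frac{\sin((k-1)\theta)}{\sin\theta}\, m^{(k-2)/2}$, and the Dirichlet-kernel-type bound $\bigl|\frac{\sin((k-1)\theta)}{\sin\theta}\bigr| \le k-1$ yields $|U_{k-1}(t,m)| \le (k-1) m^{(k-2)/2} = O(m^{k/2})$ for fixed $m$.

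Next I would recall from \eqref{eqn:Ainew-equals-beta-convolution-Ai} and the linearity of the construction that
\begin{equation}
A_2^\nw(m,N,k) = \frac{1}{2} \sum_{t^2 < 4m} U_{k-1}(t,m) \sum_{n} h_w\!\lrp{\frac{t^2-4m}{n^2}} \mu^\nw_{t,n,m}(N),
\end{equation}
since $\beta * (\cdot)$ acts only on the variable $N$ and the outer data $t, n, m$ are untouched. Now for fixed $m$, the outer sum over $t$ has at most $O(1)$ terms (only finitely many integers satisfy $t^2 < 4m$), and for each such $t$ the inner sum over $n$ also has only $O(1)$ terms, since $n^2 \mid (t^2 - 4m)$ forces $n \le \sqrt{4m}$. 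For each of these finitely many pairs $(t,n)$, the class number $h_w\bigl(\frac{t^2-4m}{n^2}\bigr)$ is a fixed constant, $\psi(n)$ is a fixed constant, and $2^{\omega(4m-t^2)}\sqrt{4m-t^2}$ is a fixed constant. Hence Lemma~\ref{lem:mu-multiplicative} gives $|\mu^\nw_{t,n,m}(N)| \le 2^{\omega(N)} \cdot C_{t,n,m}$ with $C_{t,n,m}$ depending only on $m$.

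Putting these together, I would write
\begin{equation}
\lrabs{A_2^\nw(m,N,k)} \le \frac{1}{2} \sum_{t^2 < 4m} \lrabs{U_{k-1}(t,m)} \sum_{n} h_w\!\lrp{\frac{t^2-4m}{n^2}} \lrabs{\mu^\nw_{t,n,m}(N)} \le \frac{1}{2} \sum_{t^2<4m}\sum_n (k-1)m^{(k-2)/2} \cdot h_w(\cdot)\, 2^{\omega(N)} C_{t,n,m},
\end{equation}
and since the double sum has $O(1)$ terms each bounded by a constant (depending only on $m$) times $k\, m^{k/2-1} 2^{\omega(N)}$, the whole expression is $O(k\, m^{k/2-1} 2^{\omega(N)}) = O(m^{k/2} 2^{\omega(N)})$ — absorbing the factor $k$ and the $1/m$ into the implied constant is fine since $m$ is fixed, though one could equally well state the sharper bound $O(k\, m^{k/2-1}2^{\omega(N)})$; as written in the corollary statement, $O(m^{k/2}2^{\omega(N)})$ follows a fortiori once we note $k-1 = O(m^{k/2-\epsilon}\cdot\text{anything})$ is too crude — rather, the cleanest route is that the corollary as stated must be interpreted with the $k$ dependence folded in, so I would double-check whether the intended bound is really $O(m^{k/2}2^{\omega(N)})$ with a hidden polynomial-in-$k$ factor or whether there is a reason $U_{k-1}$ contributes only $O(m^{k/2})$; the Lucas bound $|U_{k-1}(t,m)| \le (k-1)m^{(k-2)/2}$ is what I would use and the extra factor of $k$ relative to $m^{k/2}$ is harmless in all downstream applications. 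The main obstacle is thus not any single deep step but rather being careful about exactly which quantities are "constants" (everything depending only on the fixed $m$) versus which grow with $N$ and $k$, and making sure the Lucas sequence bound is invoked correctly in the regime $t^2 < 4m$ where the roots have equal absolute value $\sqrt m$.
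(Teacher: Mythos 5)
Your decomposition of $A_2^\nw(m,N,k)$ via the Dirichlet convolution, your use of Lemma~\ref{lem:mu-multiplicative}, and your observation that the $(t,n)$ double sum has $O(1)$ terms with all the auxiliary quantities bounded in terms of $m$ alone, are all correct and match the paper's structure. However, your bound on the Lucas sequence introduces a genuine gap.

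You bound $|U_{k-1}(t,m)| \le (k-1)\,m^{(k-2)/2}$ via the Dirichlet-kernel estimate $\bigl|\sin((k-1)\theta)/\sin\theta\bigr| \le k-1$. This gives $A_2^\nw(m,N,k) = O\bigl(k\, m^{k/2}2^{\omega(N)}\bigr)$, not $O\bigl(m^{k/2}2^{\omega(N)}\bigr)$. The paper explicitly sets the convention that big-$O$ is with respect to both $N$ \emph{and} $k$, so the factor of $k$ is \emph{not} absorbed into the implied constant. You flag this issue yourself but then wave it away as ``harmless in all downstream applications.'' That is not the case: in Proposition~\ref{prop:nsquare}, the bound on $\Tr T_m^\nw$ from Lemma~\ref{lem:m-not-square} is squared and divided out by $\psi^\nw(N)\,m^{k-2}$, producing an error term of the shape $O(\theta_2(N))$ \emph{uniform in $k$}; this is compared against the main term $-\tfrac{k-1}{12}$. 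If the error carried a factor of $k^2$ (as your bound would give after squaring), then for fixed $N$ and $k\to\infty$ the error would dominate the linear main term and the sign argument would fail.

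The fix is the paper's own bound, which is uniform in $k$: simply apply the triangle inequality to the closed form $U_{k-1}(t,m) = (\rho^{k-1}-\bar\rho^{k-1})/(\rho-\bar\rho)$, using $|\rho| = |\bar\rho| = \sqrt m$ and $|\rho - \bar\rho| = \sqrt{4m - t^2}$, to get
\begin{equation}
|U_{k-1}(t,m)| \le \frac{|\rho|^{k-1} + |\bar\rho|^{k-1}}{|\rho - \bar\rho|} = \frac{2m^{(k-1)/2}}{\sqrt{4m-t^2}}.
\end{equation}
This has no factor of $k$, and substituting it in place of your Dirichlet-kernel bound gives exactly $O(m^{k/2}2^{\omega(N)})$ as claimed.
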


\begin{proof}
From \eqref{eqn:A2m-formula} and \eqref{eqn:Ainew-equals-beta-convolution-Ai},
\begin{align} \label{eqn:A2m-new-formula}
    A_{2}^\nw(m,N,k) &=  \frac{1}{2} \sum_{M \mid N} \beta\lrp{\frac{N}{M}} \sum_{t^2 < 4m} \sum_{n} U_{k-1}(t,m) h_w \lrp{ \frac{t^2-4m}{n^2} } \mu_{t,n,m}(M) \\
    &= \frac{1}{2} \sum_{t^2 < 4m} \sum_{n} U_{k-1}(t,m) h_w \lrp{ \frac{t^2-4m}{n^2} } \mu_{t,n,m}^\nw(N).
\end{align}
    Using the facts that $\lrabs{\rho} = \sqrt{m}$ and $\lrabs{\rho - \bar{\rho}} = \sqrt{4m-t^2}$, where $\rho, \bar{\rho}$ are the roots of $x^2 - tx + m = 0$, we have by the definition of $U_{k-1}$ from Lemma \ref{lemma:eichler-selberg-trace-formula} that
\begin{equation} \label{eqn:U-bounds}
    \lrabs{U_{k - 1}(t, m)} = \lrabs{ \frac{\rho^{k-1} - \bar{\rho}^{k-1}}{\rho - \bar{\rho}} } \leq \frac{ \lrabs{\rho^{k-1}} + \lrabs{\bar{\rho}^{k-1} } }{ \lrabs{ \rho - \bar{\rho} } } = \frac{2 m^{(k-1)/2}}{\sqrt{4m-t^2}} .
\end{equation} 
So by Lemma \ref{lem:mu-multiplicative},
\begin{equation}\label{eqn:U-mu-bound1}
    |U_{k-1}(t,m) \cdot \mu_{t,n,m}^\nw(N)| \leq 2 m^{(k-1)/2} \cdot 2^{\omega(N)} \cdot \psi(n) \cdot 2^{\omega(4m - t^2)}.
\end{equation}
Thus,
\begin{align}
    |A_2^\nw (m,N,k)| & \leq \frac{1}{2} \sum_{t^2 < 4m} \sum_n |U_{k-1}(t,m)| \cdot h_w \left( \frac{t^2 - 4m}{n^2} \right) |\mu_{t,n,m}^\nw(N)| \\
    & \leq \frac{1}{2} \sum_{t^2 < 4m} \sum_n h_w \left( \frac{t^2 - 4m}{n^2} \right) 2m^{(k-1)/2} \cdot 2^{\omega(N)} \cdot \psi(n) \cdot 2^{\omega(4m-t^2)} \\
    & = m^{(k-1)/2}2^{\omega(N)} \sum_{t^2 < 4m} 2^{\omega(4m - t^2)} \sum_n h_w \left( \frac{t^2 - 4m}{n^2} \right) \psi(n) 
    \label{eqn:A2new-explicit-bound}
    \\
    & = O\left(m^{k/2}2^{\omega(N)}\right),
\end{align}
as desired.
\end{proof}


\subsection{Bounding \texorpdfstring{$A_3^\nw(m,N,k)$}{A3new(m,N,k)}}

Next, recall that
\begin{equation}
    A_{3}(m,N,k) = \frac{1}{2} \sum_{d|m} \min(d,m/d)^{k-1} \sum_\tau \phi(\gcd(\tau, N/\tau)),
\end{equation}
where the inner summation runs over $\tau \mid N$  such that $\gcd(\tau, N/\tau) \mid (d-m/d)$.
Now denote this sum as
\begin{equation} \label{eqn:sigma-def}
    \Sigma_{m,d}(N) := \sum_{\tau} \phi(\gcd(\tau, N/\tau)).
\end{equation}
Observe that $A_3(m,N,k)$ is a linear combination of the $\Sigma_{m,d}$. We now show that these $\Sigma_{m,d}$ are multiplicative and use Lemma \ref{lemma:dirichlet-convolution} to bound the convolution $\beta * \Sigma_{m,d}$. 

\begin{lemma} \label{lem:sigma-multiplicative}
    Let $m,d \geq 1$ with $d \mid m$, $h := |d - \frac{m}{d}|$, and $\Sigma_{m,d}(N)$ be defined according to \eqref{eqn:sigma-def}. Then $\Sigma_{m,d}$ is multiplicative. Furthermore, define $\Sigma_{m,d}^\nw := \beta * \Sigma_{m,d}$. Then $\Sigma_{m,d}^\nw$ is bounded by
     \begin{equation}
         |\Sigma_{m,d}^\nw(N)| \leq 
         \begin{dcases}
            \frac{\sqrt{N}}{\pi_2(N)^2}  & \text{if } h = 0, \\
            h \cdot 4^{\omega(h)} & \text{if } h \neq 0.\\
        \end{dcases}
     \end{equation}
     Here,  
    $\pi_2(N)$ is the multiplicative function defined as
    \begin{align}
        \pi_2(N) := \prod_{p \mid N} \lrp{1+ \frac{1}{p-1}}.
    \end{align}
\end{lemma}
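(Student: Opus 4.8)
The plan is to reduce everything to prime powers and then invoke Lemma \ref{lemma:dirichlet-convolution}. First I would verify that $\Sigma_{m,d}$ is multiplicative: for coprime $N_1,N_2$ a divisor $\tau\mid N_1N_2$ factors uniquely as $\tau=\tau_1\tau_2$ with $\tau_i\mid N_i$, and since $\gcd(\tau_1,N_1/\tau_1)$ and $\gcd(\tau_2,N_2/\tau_2)$ are coprime, $\gcd(\tau,N_1N_2/\tau)$ is their product, the constraint ``$\gcd(\tau,N/\tau)\mid h$'' from \eqref{eqn:sigma-def} splits into the two constraints on $\tau_1,\tau_2$ (coprime integers divide $h$ iff their product does), and $\phi$ is multiplicative; hence the sum factors. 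Consequently $\Sigma_{m,d}^{\nw}=\beta*\Sigma_{m,d}$ is multiplicative too, so it suffices to bound $\lrabs{(\beta*\Sigma_{m,d})(p^e)}$ on prime powers and multiply over the exact prime-power divisors of $N$.

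Next I would compute $\Sigma_{m,d}(p^e)$ in closed form. Writing $\tau=p^f$ with $0\le f\le e$ gives $\gcd(\tau,p^e/\tau)=p^{\min(f,e-f)}$, so with $a:=v_p(h)$ (interpreting $a=\infty$ when $h=0$) we get $\Sigma_{m,d}(p^e)=\sum_{\min(f,e-f)\le a}\phi(p^{\min(f,e-f)})$. Evaluating the (essentially geometric) sum: for $e\le 2a$ one finds $\Sigma_{m,d}(p^{2s})=p^{s-1}(p+1)$ and $\Sigma_{m,d}(p^{2s+1})=2p^{s}$, while for $e>2a$ (which happens only when $h\ne 0$) the two non-overlapping tails $f\le a$ and $f\ge e-a$ each contribute $\sum_{j=0}^{a}\phi(p^j)=p^{a}$, so $\Sigma_{m,d}(p^e)=2p^{a}$. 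In particular $\Sigma_{m,d}(p^e)\le 2p^{\min(a,\lfloor e/2\rfloor)}$.

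Then I would plug this into Lemma \ref{lemma:dirichlet-convolution}. For $h=0$ there is no constraint, and the substitution gives $(\beta*\Sigma_{m,d})(p^e)=0$ for odd $e$, $(\beta*\Sigma_{m,d})(p^2)=p-2$, and $(\beta*\Sigma_{m,d})(p^{2s})=p^{s-2}(p-1)^2$ for $s\ge 2$; since $p-2\le p-2+\tfrac1p=p^{-1}(p-1)^2$, in all cases $\lrabs{(\beta*\Sigma_{m,d})(p^e)}\le p^{e/2-2}(p-1)^2$, and multiplying over the exact prime-power divisors of $N$ (using $\pi_2(N)=\prod_{p\mid N}\tfrac{p}{p-1}$) gives $\lrabs{\Sigma_{m,d}^{\nw}(N)}\le \sqrt N/\pi_2(N)^2$. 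For $h\ne 0$: if $p\nmid h$ then $\Sigma_{m,d}(p^e)=2$ for $e\ge1$ and the lemma gives $(\beta*\Sigma_{m,d})(p^e)=0$ except $(\beta*\Sigma_{m,d})(p^2)=-1$; if $p\mid h$, splitting into $e$ odd, $e$ even with $e\le 2a+2$, and $e\ge 2a+3$, the lemma gives $0$ for $e$ odd and for $e\ge 2a+3$, and in the remaining (even) cases a short check shows $\lrabs{(\beta*\Sigma_{m,d})(p^e)}<p^{a}$, hence $\le 4p^{v_p(h)}$. Multiplying over prime powers, the primes $p\nmid h$ contribute factors of modulus $\le1$ and the primes $p\mid h$ factors $\le 4p^{v_p(h)}$, so $\lrabs{\Sigma_{m,d}^{\nw}(N)}\le\prod_{p\mid h}4p^{v_p(h)}=4^{\omega(h)}h$.

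The main obstacle is organizational rather than conceptual: one must carry out the two families of local computations — the constrained divisor sum $\Sigma_{m,d}(p^e)$ and then the three-term convolution from Lemma \ref{lemma:dirichlet-convolution} — carefully enough to pin down the change of behaviour at $e=2v_p(h)$ (where the $p^{e/2}$-type growth of the unconstrained case saturates) and to identify exactly which exponents make the local convolution vanish, so that the product bounds emerge in the claimed shapes $\sqrt N/\pi_2(N)^2$ and $4^{\omega(h)}h$.
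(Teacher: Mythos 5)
Your proposal is correct and follows essentially the same route as the paper: prove $\Sigma_{m,d}$ is multiplicative by factoring divisors of a coprime product, compute $\Sigma_{m,d}(p^e)$ explicitly (splitting on $e$ relative to $2v_p(h)$), and then apply Lemma \ref{lemma:dirichlet-convolution} prime-by-prime to bound $\Sigma_{m,d}^{\nw}$. The only cosmetic difference is that in the $p\mid h$ sub-case you evaluate the local convolution exactly and obtain the sharper bound $|\Sigma_{m,d}^{\nw}(p^e)|<p^{v_p(h)}$, whereas the paper simply applies the triangle inequality to $\Sigma_{m,d}(p^e)\le 2p^{v_p(h)}$ to get $\le 4p^{v_p(h)}$; both yield the stated $4^{\omega(h)}h$.
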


\begin{proof}
    Let $L$ and $M$ be coprime. Note that if $\tau \mid L$ and $\rho \mid M$, then $\gcd(\tau, L/\tau)$ and $\gcd(\rho, M/\rho)$ are coprime, and moreover $\gcd(\tau\rho, LM/\tau\rho) = \gcd(\tau, L/\tau)\cdot\gcd(\rho, M/\rho)$. Thus, 
    \begin{align}
        \Sigma_{m,d}(L) \Sigma_{m,d}(M) & = \sum_{\substack{\tau \mid L \\ (\tau, L/\tau) \mid h}} \sum_{\substack{\rho \mid M \\ (\rho, M/\rho) \mid h}} \phi(\gcd(\tau, L/\tau)) \phi(\gcd(\rho, M/\rho)) \\
        & = \sum_{\substack{\tau\mid L,\ \rho\mid M \\ (\tau, L/\tau)\mid h,\ (\rho, L/\rho)\mid h}} \phi(\gcd(\tau, L/\tau)\gcd(\rho, L/\rho)) \\
        & = \sum_{\substack{\tau\rho \mid LM \\ (\tau\rho, LM/\tau\rho)\mid h}} \phi(\gcd(\tau\rho, LM/\tau\rho)) \\
        &= \Sigma_{m,d}(LM).
    \end{align}
    This proves that $\Sigma_{m,d}$ is multiplicative. We can then define $\Sigma_{m,d}^\nw := \beta * \Sigma_{m,d}$. We divide the remaining proof into the case of $h=0$ and the case of $h\neq0$.
    
(1) First,  suppose $h = 0$.  
Then
    \begin{align} \label{eqn:sigma-h0}
        \Sigma_{m,d}(p^r) & = \sum_{\substack{\tau \mid p^r \\ (\tau,\,p^r/\tau) \mid 0 }} \phi(\gcd(\tau, p^r/\tau)) \\
        & = \sum_{0 \leq s \leq r} \phi(\gcd(p^s, p^{r-s})) \\
        & = \begin{dcases}
            2\cdot \sum_{0 \leq s \leq r/2} \phi(p^s) - \phi(p^{r/2}) & \text{if } r \text{ is even},\\
            2\cdot \sum_{0 \leq s \leq (r-1)/2} \phi(p^s) & \text{if } r \text{ is odd},\\
        \end{dcases} \\
        & = \begin{dcases}
            2p^{r/2} - \phi(p^{r/2}) & \text{if } r \text{ is even},\\
            2p^{(r-1)/2} & \text{if } r \text{ is odd}.\\
        \end{dcases} \label{eqn:sigma-m-d-local-formula-h0}
    \end{align}
    For the last step, we used the well-known formula $\sum_{d \mid N} \phi(d) = N$.
    
    We can now compute $\Sigma_{m,d}^\nw(p^r)$ explicitly. By 
    Lemma \ref{lemma:dirichlet-convolution}
    and \eqref{eqn:sigma-m-d-local-formula-h0},
    \begin{align}
        \Sigma_{m,d}^\nw(p) & = \Sigma_{m,d}(p) - 2 = 2 - 2 = 0,  \\
        \Sigma_{m,d}^\nw(p^2) & = \Sigma_{m,d}(p^2) - 2\Sigma_{m,d}(p) + 1 \\
        & = 2p - \phi(p) - 2 \cdot 2 + 1 \\
        & = p - 2.
    \end{align}
    For $r \geq 3$ odd,
    \begin{align}
        \Sigma_{m,d}^\nw(p^r) & = \Sigma_{m,d}(p^r) -2\Sigma_{m,d}(p^{r - 1}) + \Sigma_{m,d}(p^{r - 2}) \\
        & = 2p^{(r-1)/2} - 2\left(2p^{(r-1)/2} - \phi(p^{(r-1)/2})\right) + 2p^{(r-3)/2} \\
        & = 2p^{(r-1)/2} - 2\left(p^{(r-1)/2} + p^{(r-3)/2}\right) +  2p^{(r-3)/2} \\
        & = 0.
    \end{align}
    For $r \geq 3$ even,
    \begin{align}
        \Sigma_{m,d}^\nw(p^r) & = \Sigma_{m,d}(p^r) - 2\Sigma_{m,d}(p^{r-1}) + \Sigma_{m,d}(p^{r-2})\\
        & = 2p^{r/2} - p^{r/2 - 1}(p - 1) - 2\cdot 2p^{r/2 - 1} + 2p^{r/2 - 1} - p^{r/2 - 2}(p - 1) \\
        & = p^{r/2 - 2}\left(2p^2 - p^2 + p - 4p + 2p - p + 1\right) \\
        & = p^{r/2} \left( \frac{p^2 - 2p + 1}{p^2} \right) \\
        & = p^{r/2} \left(\frac{p - 1}{p}\right)^2.
    \end{align}
    To summarize, when $h = 0$,
    \begin{equation}
        \Sigma_{m,d}^\nw(p^r) = \begin{dcases}
            0 & \text{if } r \text{ is odd}, \\
            p - 2 & \text{if } r = 2, \\
            p^{r/2} \left(\frac{p-1}{p}\right)^2 & \text{if } r \geq 4 \text{ is even.} \\
        \end{dcases} \\
    \end{equation}
Observe that in each of these cases,
    \begin{align}
        \Sigma_{m,d}^\nw(p^r) & \leq p^{r/2}\lrp{\frac{p-1}{p}}^2 = \frac{p^{r/2}}{\lrp{1 + \frac{1}{p-1}}^2},
    \end{align}
    which yields
    \begin{align}
        \Sigma_{m,d}^\nw(N) & \leq \frac{\sqrt{N}}{\pi_2(N)^2},
    \end{align}
    as desired.
    
(2) Next, consider the case of $h \neq 0$. For $p \nmid h$ and $r \geq 1$,
    \begin{align} \label{eqn:sigma-p-coprime-h}
        \Sigma_{m,d}(p^r) & = \sum_{\tau} \phi(\gcd(\tau, p^r/\tau)) \\
        & = 2\cdot \phi(\gcd(1, p^r)) \\
        & = 2, \label{eqn:sigma-m-d-local-formula-hn0-1}
    \end{align}
    so by Lemma \ref{lemma:dirichlet-convolution},
    \begin{equation}
        \Sigma_{m,d}^\nw(p^r) = \begin{dcases}
            0 & \text{if } r = 1, \\
            -1 & \text{if } r = 2, \\
            0 & \text{if } r\geq 3. \\ 
        \end{dcases} \label{eqn:sigma-m-d-new-local-formula-hn0}
    \end{equation}
    For $p \mid h$, 
    \begin{align} \label{eqn:sigma-p-divides-h}
        \Sigma_{m,d}(p^r) & = \sum_{\tau} \phi(\gcd(\tau, p^r/\tau)) \\
        & = \sum_{\substack{0 \leq s \leq r \\ \gcd(p^s, p^{r-s}) \mid h}} \phi(\gcd(p^s, p^{r-s})) \\
        & \leq 2 \sum_{0 \leq s \leq v_p(h)} \phi(p^s) \\
        & = 2p^{v_p(h)}. \label{eqn:sigma-m-d-local-formula-hn0-2}
    \end{align}
    Then by Lemma \ref{lemma:dirichlet-convolution},
    \begin{align}
        |\Sigma_{m,d}^\nw(p^r)| & = |\Sigma_{m,d}(p^r) - 2\Sigma_{m,d}(p^{r - 1}) + \Sigma_{m,d}(p^{r - 2})|\\
        & \leq \max(\Sigma_{m,d}(p^r) + \Sigma_{m,d}(p^{r-2}), \ 2\Sigma_{m,d}(p^{r - 1}) )\\
        & \leq 2p^{v_p(h)} + 2p^{v_p(h)} \\
        & = 4p^{v_p(h)}, \label{eqn:sigma-m-d-new-local-formula-hn0-p-divides-h}
    \end{align}
    where we interpret $\Sigma_{m,d}(p^{r-2})$ here as 0 if $r = 1$. It immediately follows from \eqref{eqn:sigma-m-d-new-local-formula-hn0} and \eqref{eqn:sigma-m-d-new-local-formula-hn0-p-divides-h} that when $h \neq 0$ and $N$ is coprime to $m$,
    \begin{align} \label{eqn:sigma-new-hn0-global}
        |\Sigma_{m,d}^\nw(N)| & \leq  h \cdot 4^{\omega(h)},
    \end{align}
    as desired.
\end{proof}
We now use this lemma to determine the asymptotic behavior of $A_{3}^\nw(m,N,k)$.
\begin{corollary} \label{cor:A3-new-bound}
    Let $m \geq 1$ be fixed, and consider $N \geq 1$ coprime to $m$, and $k \geq 2$ even. Then 
    \begin{equation}
        A_3^\nw(m, N, k) = \begin{dcases}
            O\lrp{\frac{m^{k/2}\sqrt{N}}{\pi_2(N)^2}} & \text{if } m \text{ is a perfect square,} \\
            O\lrp{m^{k/2}} & \text{if }  m \text{ is not a perfect square.} \\
        \end{dcases}
    \end{equation}
\end{corollary}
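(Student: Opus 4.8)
The plan is to reduce $A_3^\nw(m,N,k)$ to a short linear combination of the convolutions $\Sigma_{m,d}^\nw$ bounded in Lemma~\ref{lem:sigma-multiplicative}, and then read off the asymptotics term by term. First I would apply \eqref{eqn:Ainew-equals-beta-convolution-Ai} to the formula for $A_3(m,M,k)$, pull the finite sum over divisors $d \mid m$ (together with the factor $\min(d,m/d)^{k-1}$, which does not depend on $M$) outside the sum over $M \mid N$, and invoke the definition $\Sigma_{m,d}^\nw = \beta * \Sigma_{m,d}$ to get
\[
A_3^\nw(m,N,k) = \frac12 \sum_{d \mid m} \min(d, m/d)^{k-1} \, \Sigma_{m,d}^\nw(N).
\]
Since $\min(d,m/d) \le \sqrt m$ for every divisor $d \mid m$, with equality if and only if $d = \sqrt m$, we have $\min(d,m/d)^{k-1} \le m^{(k-1)/2} = O(m^{k/2})$ uniformly in $d$. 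Thus the whole task is to control the size of $\Sigma_{m,d}^\nw(N)$ as $N$ varies, which is exactly what Lemma~\ref{lem:sigma-multiplicative} provides.

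Next I would split the divisor sum according to whether $h := |d - m/d|$ vanishes. A divisor $d \mid m$ satisfies $h = 0$ exactly when $d = m/d$, i.e.\ precisely when $m$ is a perfect square and $d = \sqrt m$. For every $d$ with $h \ne 0$, Lemma~\ref{lem:sigma-multiplicative} gives $|\Sigma_{m,d}^\nw(N)| \le h \cdot 4^{\omega(h)}$; because $m$ is fixed, $d$ ranges over finitely many divisors and each such $h$ (hence also $\omega(h)$) is bounded, so this contribution is $O(1)$ and the corresponding term of $A_3^\nw$ is $O(m^{k/2})$; summing over the finitely many such $d$ keeps the bound $O(m^{k/2})$. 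For the single remaining term with $h = 0$ (which occurs only when $m$ is a perfect square), we have $\min(\sqrt m, \sqrt m)^{k-1} = m^{(k-1)/2}$ and Lemma~\ref{lem:sigma-multiplicative} gives $|\Sigma_{m,\sqrt m}^\nw(N)| \le \sqrt N / \pi_2(N)^2$, so this term is $O(m^{k/2} \sqrt N / \pi_2(N)^2)$.

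Combining the pieces, if $m$ is not a perfect square then only $h \ne 0$ terms occur and $A_3^\nw(m,N,k) = O(m^{k/2})$; if $m$ is a perfect square then $A_3^\nw(m,N,k) = O(m^{k/2}) + O(m^{k/2} \sqrt N / \pi_2(N)^2)$. The one point that requires care — and the main obstacle in an otherwise routine argument — is absorbing the $O(m^{k/2})$ term into $O(m^{k/2} \sqrt N / \pi_2(N)^2)$ in the perfect-square case, which amounts to checking that $\sqrt N / \pi_2(N)^2$ is bounded below by an absolute positive constant. For this I would write $N = \prod_p p^{a_p}$, so that $\sqrt N / \pi_2(N)^2 = \prod_p p^{a_p/2 - 2}(p-1)^2$, and observe that every local factor is at least $1$ except for the finitely many configurations with $p \in \{2,3\}$ and small exponent, so the product stays bounded away from $0$. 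With that estimate in hand both cases of the corollary follow.
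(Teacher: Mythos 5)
Your proposal is correct and follows essentially the same route as the paper: pull the $\beta$-convolution inside the finite sum over $d\mid m$, bound $\min(d,m/d)^{k-1}$ by $m^{(k-1)/2}$, and apply Lemma~\ref{lem:sigma-multiplicative} term by term, noting that $h=0$ occurs only for $d=\sqrt{m}$. The one thing you do that the paper glosses over with ``follows immediately'' is the final absorption of the $O(m^{k/2})$ contribution from $h\neq0$ terms into $O\lrp{m^{k/2}\sqrt{N}/\pi_2(N)^2}$ in the perfect-square case; your verification that $\sqrt{N}/\pi_2(N)^2 = \prod_{p\mid N} p^{a_p/2-2}(p-1)^2$ has every local factor $\geq 1$ except for $p=2$ with $a_p\leq 3$ and $p=3$ with $a_p=1$ is a correct and worthwhile addition.
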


\begin{proof}
    Since $\min(d, m/d)^{k-1} \leq m^{(k-1)/2}$,
    \begin{align}
        |A_{3}^\nw(m,N,k)| &= \lrabs{\frac{1}{2} \sum_{M \mid N} \beta\lrp{\frac{N}{M}} \sum_{d|m} \min(d,m/d)^{k-1} \Sigma_{m,d}(M)} \\
        & = \lrabs{\frac{1}{2} \sum_{d|m} \min(d, m/d)^{k-1} \Sigma_{m,d}^\nw(N)} \\
        & \leq \frac{1}{2} m^{(k-1)/2} \sum_{d|m} \lrabs{\Sigma_{m,d}^\nw(N)}.
    \end{align}
    The desired result then follows immediately from Lemma \ref{lem:sigma-multiplicative} (since $h = 0$ only for $d = \sqrt{m}$, which requires $m$ to be a perfect square).
\end{proof}


\subsection{Bounding \texorpdfstring{$A_4^\nw(m,N,k)$}{A4new(m,N,k)}}

For $t \geq 0$, we use the notation $\sigma_t(m) = \sum_{d \mid m} d^t$. Then since $N$ is coprime to $m$,
\begin{equation} \label{eqn:A4-Section-3}
    A_{4}(m,N,k) = 
    \begin{dcases} 
      \sigma_1(m) & \text{if $k=2$} \\
      0 & \text{if $k>2$} .\\
   \end{dcases} \\
\end{equation}

Observe that $A_4(m,N,k)$ is a multiple of the constant multiplicative function $\mathbf{1}(N) = 1$. The following Lemma then follows immediately from Lemma \ref{lemma:dirichlet-convolution}. 

\begin{lemma} \label{lem:1-new-bound}
    Define the multiplicative function $\mathbf{1}^\nw := \beta * \mathbf{1}$. Then $\lrabs{\mathbf{1}^\nw(N)} \leq 1$.
\end{lemma}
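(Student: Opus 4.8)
The plan is to invoke Lemma \ref{lemma:dirichlet-convolution} with $f = \mathbf{1}$ and simply read off the local values. Since $\mathbf{1}(p^r) = 1$ for every prime $p$ and every $r \geq 0$, the formula of that lemma gives $\mathbf{1}^\nw(p) = \mathbf{1}(p) - 2 = -1$, while for $r \geq 2$ it gives $\mathbf{1}^\nw(p^r) = \mathbf{1}(p^r) - 2\mathbf{1}(p^{r-1}) + \mathbf{1}(p^{r-2}) = 1 - 2 + 1 = 0$. Hence $\mathbf{1}^\nw$ is supported on squarefree integers, with value $-1$ at each prime.

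Next I would use that $\mathbf{1}^\nw = \beta * \mathbf{1}$ is multiplicative (being a Dirichlet convolution of multiplicative functions), so that $\mathbf{1}^\nw(N) = \prod_{p \mid N} \mathbf{1}^\nw(p^{v_p(N)})$. If $N$ is squarefree, every factor equals $-1$, so $\mathbf{1}^\nw(N) = (-1)^{\omega(N)}$ and $\lrabs{\mathbf{1}^\nw(N)} = 1$; if $v_p(N) \geq 2$ for some $p$, the corresponding factor is $0$ and $\mathbf{1}^\nw(N) = 0$. In either case $\lrabs{\mathbf{1}^\nw(N)} \leq 1$, as claimed. (One recognizes $\mathbf{1}^\nw$ as the M\"obius function $\mu$, though this identification is not needed.) There is no genuine obstacle here; the only point to keep straight is the case distinction on whether $N$ is squarefree.
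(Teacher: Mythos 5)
Your proof is correct and matches the paper's approach exactly: the paper simply observes that the bound follows immediately from Lemma \ref{lemma:dirichlet-convolution}, and you have spelled out the local computation ($\mathbf{1}^\nw(p) = -1$, $\mathbf{1}^\nw(p^r) = 0$ for $r \geq 2$) and the appeal to multiplicativity that justify that remark.
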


This lemma then yields
\begin{equation} \label{eqn:A4-big-O}
    |A_4^\nw(m,N,k)| \leq \sigma_1(m) = O(1).
\end{equation}
(Remember that we are using big-O notation with respect to $N$ and $k$.)


\section{Proof of Theorem \ref{thm:main-theorem} for trivial character} \label{sec:main-thm}

In this section, we prove Theorem \ref{thm:main-theorem} for trivial character. First, we define the functions $\theta_i(N)$ which will be used to express the error terms for certain trace estimates.

\begin{lemma} \label{lem:theta-bounds}
    Define
    \begin{equation}
    \begin{alignedat}{3}
        \theta_1(N) &= \frac{ \sqrt{N}}{\psi^\nw(N)\pi_2(N)^2},
        \qquad\qquad
        & \theta_2(N) &= \frac{4^{\omega(N)}}{\psi^\nw(N)}, \\
        \theta_3(N) &= \frac{2^{\omega(N)}}{\psi^\nw(N)}, 
        \qquad\qquad 
        & \theta_4(N) &= \frac{1}{\psi^\nw(N)}.
    \end{alignedat}
    \end{equation}
    Then each $\theta_i(N) \longrightarrow 0$ as $N \longrightarrow \infty$.
\end{lemma}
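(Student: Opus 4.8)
The plan is to show each $\theta_i(N) \to 0$ by exploiting the lower bound $\psi^\nw(N) \geq N/\pi_1(N)$ from Lemma~\ref{lem:psi-new}, together with the fact that each of $\pi_1(N)$, $\pi_2(N)$, $2^{\omega(N)}$, and $4^{\omega(N)}$ grows much slower than any positive power of $N$. The key observation is that all of $\pi_1$, $\pi_2$, $2^{\omega(\cdot)}$, $4^{\omega(\cdot)}$ are multiplicative functions whose value at a prime power $p^r$ is bounded independent of $r$ (indeed it depends only on $p$, and for large $p$ it is $1 + O(1/p)$), so for any fixed $\epsilon > 0$ each of them is $O_\epsilon(N^\epsilon)$. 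This is a standard fact, provable by the usual argument: writing $g$ for any one of these functions, $g(N)/N^\epsilon = \prod_{p \mid N} g(p^{v_p(N)})/p^{\epsilon v_p(N)}$, and all but finitely many primes contribute a factor $\leq 1$ (since $g(p^r) \leq C$ while $p^{\epsilon r} \to \infty$), so the product is bounded by a constant depending only on $\epsilon$.

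Concretely, first I would record the inequality $\psi^\nw(N) \geq N/\pi_1(N)$ and note $\pi_1(N) = O_\epsilon(N^\epsilon)$, hence $1/\psi^\nw(N) \leq \pi_1(N)/N = O_\epsilon(N^{\epsilon - 1})$. Then:
\begin{itemize}
\item For $\theta_4(N) = 1/\psi^\nw(N) \leq O_\epsilon(N^{\epsilon-1}) \to 0$ directly.
\item For $\theta_3(N) = 2^{\omega(N)}/\psi^\nw(N)$: bound $2^{\omega(N)} = O_\epsilon(N^\epsilon)$ and combine with the bound on $1/\psi^\nw(N)$, taking $\epsilon < 1/2$, to get $\theta_3(N) = O(N^{2\epsilon - 1}) \to 0$.
\item For $\theta_2(N) = 4^{\omega(N)}/\psi^\nw(N)$: identical, using $4^{\omega(N)} = O_\epsilon(N^\epsilon)$.
\item For $\theta_1(N) = \sqrt{N}/(\psi^\nw(N)\pi_2(N)^2)$: since $\pi_2(N) \geq 1$, we have $\theta_1(N) \leq \sqrt{N}/\psi^\nw(N) \leq \sqrt{N} \cdot \pi_1(N)/N = \pi_1(N)/\sqrt{N} = O_\epsilon(N^{\epsilon - 1/2}) \to 0$.
\end{itemize}

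The main (and really only) obstacle is establishing the sub-polynomial growth bound $g(N) = O_\epsilon(N^\epsilon)$ for the relevant multiplicative functions; once that is in hand, each $\theta_i$ estimate is a one-line consequence of Lemma~\ref{lem:psi-new}. Since this sub-polynomial bound is entirely standard (it is the same argument that shows $d(N) = O_\epsilon(N^\epsilon)$ for the divisor function, and indeed $2^{\omega(N)} \leq d(N)$), I would either cite it or dispatch it in a sentence. I should be slightly careful that $\pi_1$ and $\pi_2$ are defined as products over $p \mid N$ of factors of the form $1 + c/(p^2 - p - 1)$ or $1 + 1/(p-1)$, which are bounded (uniformly in the power of $p$ dividing $N$, since they only depend on which primes divide $N$) and tend to $1$ as $p \to \infty$; this is exactly the hypothesis needed for the sub-polynomial bound, so no genuine difficulty arises. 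A clean way to present the whole lemma is: prove once that any multiplicative $g \geq 1$ with $g(p^r)$ bounded by a constant depending only on $p$ and with $g(p^r) = 1 + O(1/p)$ satisfies $g(N)/N^\epsilon \to 0$ for every $\epsilon > 0$, then apply it four times.
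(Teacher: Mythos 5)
Your proposal is correct and follows essentially the same route as the paper: lower-bound $\psi^\nw(N)$ by $N/\pi_1(N)$ via Lemma~\ref{lem:psi-new}, then invoke the standard sub-polynomial bound $g(N) = O_\varepsilon(N^\varepsilon)$ for the multiplicative functions $\pi_1$, $\pi_2$, $2^{\omega(\cdot)}$, $4^{\omega(\cdot)}$ (the paper dispatches this by observing $\tfrac13\pi_1(N), \pi_2(N) \leq 2^{\omega(N)} = O(N^\varepsilon)$ and citing Hardy--Wright). The only cosmetic difference is that you discard the $\pi_2(N)^{-2}$ factor in $\theta_1$ via $\pi_2 \geq 1$, which is harmless.
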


\begin{proof}
    Recall that $\pi_1(N) = \prod_{p \mid N} \lrp{1 + \frac{p+1}{p^2-p-1}}$ and $\pi_2(N) = \prod_{p \mid N} \lrp{1+ \frac{1}{p-1}}$. Now, observe that $\frac13 \pi_1(N),\pi_2(N) \leq 2^{\omega(N)} = O(N^{\varepsilon})$ for any $\varepsilon > 0$ \cite[Sections~18.1,~22.13]{hardy-wright}.
    Thus since $\psi^\nw(N) \geq \frac{N}{\pi_1(N)}$ by Lemma \ref{lem:psi-new}, we have that $\theta_1(N) = O(N^{-1/2+\varepsilon}) \longrightarrow 0$ as $N \longrightarrow \infty$, and for $i \in \{2, 3, 4\}$, $\theta_i(N) = O(N^{-1 + \varepsilon}) \longrightarrow 0$ as $N \longrightarrow \infty$.
\end{proof}

Our proof of Theorem \ref{thm:main-theorem} for trivial character will be divided into two cases: when $m$ is not a perfect square, and when $m$ is a perfect square. First, we present two lemmas which estimate $\Tr T_m^\nw(N,k)$. 
\begin{lemma} \label{lem:m-not-square}
    Let $m$ fixed not be a perfect square, and consider $N \geq 1$ coprime to $m$ and $k \geq 2$ even. Then
    $$\Tr T_m^\nw(N,k) = O(2^{\omega(N)} m^{k/2}).$$
\end{lemma}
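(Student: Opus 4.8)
The plan is to start from the Eichler--Selberg trace formula for the new subspace, equation \eqref{eqn:new-eichler-selberg-trace-formula}, which writes
$$\Tr T_m^\nw(N,k) = A_1^\nw(m,N,k) - A_2^\nw(m,N,k) - A_3^\nw(m,N,k) + A_4^\nw(m,N,k),$$
and then bound each of the four terms individually using the results of Section \ref{sec:Ai-lincomb-multiplicative}. The key observation is that since $m$ is not a perfect square, $\chi_0(\sqrt{m}) = 0$, so the $A_1^\nw$ term vanishes entirely by \eqref{eqn:A1m-new-formula}. This is precisely why the bound for non-square $m$ is so much smaller than one would otherwise expect: the ``main term'' $A_1^\nw$, which is of size roughly $k\,\psi^\nw(N) m^{k/2-1}$, is simply absent.

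First I would invoke Corollary \ref{cor:A2-new-bound} to get $A_2^\nw(m,N,k) = O(m^{k/2} 2^{\omega(N)})$. Next, since $m$ is not a perfect square, Corollary \ref{cor:A3-new-bound} gives $A_3^\nw(m,N,k) = O(m^{k/2})$, which is certainly $O(m^{k/2} 2^{\omega(N)})$. Finally, \eqref{eqn:A4-big-O} gives $A_4^\nw(m,N,k) = O(1)$, again absorbed into $O(m^{k/2} 2^{\omega(N)})$. Adding these three estimates and using $A_1^\nw(m,N,k) = 0$ yields $\Tr T_m^\nw(N,k) = O(m^{k/2} 2^{\omega(N)})$, which is the claim (recall that $m$ is a fixed constant, so all big-$O$ constants may depend on $m$).

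There is essentially no obstacle here: every ingredient has already been established in the preceding sections, and the proof is just a matter of assembling the four bounds and noting the vanishing of $A_1^\nw$. The only point requiring a moment's care is bookkeeping with the big-$O$ conventions --- confirming that $O(m^{k/2})$ and $O(1)$ are each dominated by $O(m^{k/2}2^{\omega(N)})$ uniformly in $N$ and $k$, which is immediate since $2^{\omega(N)} \geq 1$ and $m^{k/2} \geq 1$. I would write this out as a short three-line proof citing Corollaries \ref{cor:A2-new-bound} and \ref{cor:A3-new-bound}, equation \eqref{eqn:A4-big-O}, and the vanishing of \eqref{eqn:A1m-new-formula}.
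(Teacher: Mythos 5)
Your proposal is correct and follows the paper's proof essentially verbatim: decompose via the new Eichler--Selberg trace formula \eqref{eqn:new-eichler-selberg-trace-formula}, note that $A_1^\nw$ vanishes because $\chi_0(\sqrt{m})=0$ for non-square $m$, and then apply Corollary \ref{cor:A2-new-bound}, Corollary \ref{cor:A3-new-bound}, and \eqref{eqn:A4-big-O} to bound the remaining three terms, each of which is dominated by $O(m^{k/2}2^{\omega(N)})$.
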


\begin{proof}
    We consider each of the $A_i^\nw (m,N,k)$ terms from \eqref{eqn:new-eichler-selberg-trace-formula} separately. First, since $m$ is not a perfect square, $A_1^\nw (m,N,k) = 0$.
    Next, by Corollary \ref{cor:A2-new-bound}, $$A_2^\nw(m, N, k) = O\left(m^{k/2}2^{\omega(N)} \right).$$
    By Corollary \ref{cor:A3-new-bound}, since $m$ is not a perfect square,
    $$A_3^\nw(m, N, k) = O\lrp{m^{k/2}}.$$
    And from \eqref{eqn:A4-big-O}, $A_4^\nw(m, N, k) = O(1)$. Thus, 
    \begin{align}
        \Tr T_m^\nw(N,k) & = A_1^\nw(m, N, k) - A_2^\nw(m, N, k) - A_3^\nw(m, N, k) + A_4^\nw(m, N, k) \\
        & =  O \lrp{m^{k/2}2^{\omega(N)}} + O\lrp{m^{k/2}} + O(1) \\
        & = O \lrp{m^{k/2}2^{\omega(N)}},
    \end{align}
    completing the proof.
\end{proof}

\begin{lemma} \label{lem:m-perfect-square}
    Let $m$ fixed be a perfect square, and consider $N \geq 1$ coprime to $m$ and $k \geq 2$ even. Then
    $$\Tr T_m^\nw(N,k) = \frac{k-1}{12} \psi^\nw(N) m^{k/2-1}    + O\lrp{\frac{m^{k/2} \sqrt{N}}{\pi_2(N)^2}}.$$
\end{lemma}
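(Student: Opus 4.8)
The plan is to analyze each of the four terms $A_i^\nw(m,N,k)$ in the decomposition \eqref{eqn:new-eichler-selberg-trace-formula}, extracting the main term from $A_1^\nw$ and absorbing the remaining three into the error. First, since $m$ is a perfect square, $\chi_0(\sqrt m) = 1$ (using that $N$ is coprime to $m$, so $\sqrt m$ is a unit mod $N$), and hence by \eqref{eqn:A1m-new-formula} we have exactly
\[
A_1^\nw(m,N,k) = \frac{k-1}{12}\,\psi^\nw(N)\,m^{k/2-1},
\]
which is precisely the claimed main term. So it remains to show that $A_2^\nw$, $A_3^\nw$, and $A_4^\nw$ are all $O\!\left(\frac{m^{k/2}\sqrt N}{\pi_2(N)^2}\right)$.

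For $A_3^\nw$, Corollary \ref{cor:A3-new-bound} gives exactly $A_3^\nw(m,N,k) = O\!\left(\frac{m^{k/2}\sqrt N}{\pi_2(N)^2}\right)$ in the perfect-square case, so that term is already in the right form. The remaining work is to check that $A_2^\nw$ and $A_4^\nw$ are dominated by this same quantity. By \eqref{eqn:A4-big-O}, $A_4^\nw(m,N,k) = O(1)$, and by Corollary \ref{cor:A2-new-bound}, $A_2^\nw(m,N,k) = O(m^{k/2}2^{\omega(N)})$. I would then observe that for any $\varepsilon>0$ one has $2^{\omega(N)} = O(N^\varepsilon)$ and $\pi_2(N)^2 = O(N^\varepsilon)$ (as recorded in the proof of Lemma \ref{lem:theta-bounds}, citing \cite[Sections 18.1, 22.13]{hardy-wright}), so that $2^{\omega(N)}\pi_2(N)^2 = O(N^\varepsilon)$ and therefore
\[
m^{k/2} 2^{\omega(N)} = O\!\left(\frac{m^{k/2}\sqrt N}{\pi_2(N)^2}\right),
\qquad
1 = O\!\left(\frac{m^{k/2}\sqrt N}{\pi_2(N)^2}\right),
\]
where the implied constants depend only on the fixed $m$. (Here I am using that $m^{k/2-1}$ and $\psi^\nw(N)$ are $\geq$ a fixed positive constant, so all three error terms are genuinely absorbed.) Summing the four contributions via \eqref{eqn:new-eichler-selberg-trace-formula} then yields the claimed estimate.

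The only mild subtlety — and the one place worth a sentence of care — is the bookkeeping in comparing $2^{\omega(N)}$ against $\sqrt N/\pi_2(N)^2$: one needs the standard fact that $\omega(N) = O(\log N/\log\log N)$, hence $2^{\omega(N)} = N^{o(1)}$, and similarly $\pi_2(N) \le 2^{\omega(N)}$ so $\pi_2(N)^2 = N^{o(1)}$, which together give $2^{\omega(N)}\pi_2(N)^2 = N^{o(1)} = O(N^{1/2})$ with room to spare. Everything else is a direct substitution of the already-proved corollaries into \eqref{eqn:new-eichler-selberg-trace-formula}, so there is no real obstacle; the lemma is essentially a repackaging of the bounds from Section \ref{sec:Ai-lincomb-multiplicative} into the form convenient for the sign analysis that follows.
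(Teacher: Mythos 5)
Your proposal is correct and follows essentially the same route as the paper: extract the main term from $A_1^\nw$ via \eqref{eqn:A1m-new-formula}, then apply Corollary \ref{cor:A2-new-bound}, Corollary \ref{cor:A3-new-bound}, and \eqref{eqn:A4-big-O} to the other three pieces. The only difference is that you spell out the comparison $2^{\omega(N)}\pi_2(N)^2 = O(N^{\varepsilon}) = O(\sqrt{N})$ needed to absorb the $A_2^\nw$ and $A_4^\nw$ contributions into $O(m^{k/2}\sqrt{N}/\pi_2(N)^2)$, a step the paper performs silently.
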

\begin{proof}
    First, by \eqref{eqn:A1m-new-formula}, 
    $$A_1^\nw(m, N, k) = \chi_0(\sqrt{m})\frac{k-1}{12} \psi^\nw(N) m^{k/2-1} = \frac{k-1}{12} \psi^\nw(N) m^{k/2-1}.$$
    Next, as in Lemma \ref{lem:m-not-square}, we have the bounds $A_2^\nw(m, N, k) = O\lrp{2^{\omega(N)}m^{k/2}}$ and \\ $A_4^\nw(m, N, k) = O(1)$. Additionally, by Corollary \ref{cor:A3-new-bound}, 
    $$A_3^\nw(m,N,k)=O\lrp{\frac{m^{k/2}\sqrt{N}}{\pi_2(N)^2}}$$
    since $m$ is a perfect square.
    
    Thus by the trace formula,
    \begin{align}
        \Tr T_m^\nw(N, k) & = A_1^\nw(m, N, k) - A_2^\nw(m,N,k) - A_3^\nw(m, N, k) + A_4^\nw(m, N, k) \\
        & = \frac{k-1}{12} \psi^\nw(N) m^{k/2-1} + O\left(2^{\omega(N)}m^{k/2}\right) + O\lrp{\frac{m^{k/2}\sqrt{N}}{\pi_2(N)^2}} + O(1) \\
        & = \frac{k-1}{12} \psi^\nw(N) m^{k/2-1} + O\lrp{\frac{m^{k/2}\sqrt{N}}{\pi_2(N)^2}},
    \end{align}
    completing the proof.  
\end{proof}

We now prove Theorem \ref{thm:main-theorem} for trivial character in two separate cases. Proposition \ref{prop:nsquare} addresses the case when $m$ is not a perfect square, and Proposition \ref{prop:square} addresses the case when $m$ is a perfect square. 
\begin{proposition} \label{prop:nsquare}
    Let $m$ be fixed and not a perfect square, and consider $N \geq 1$ coprime to $m$ and $k \geq 2$ even. Then $a_2^\nw(m,N,k) < 0$ for all but finitely many pairs $(N,k)$.
\end{proposition}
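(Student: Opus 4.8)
The plan is to combine the coefficient formula of Lemma \ref{lem:a2-coeff-formula} with the trace estimate of Lemma \ref{lem:m-not-square}, showing that the dominant contribution to $a_2^\nw(m,N,k)$ comes from the $d=m$ term in the sum $\sum_{d\mid m}\chi(d)d^{k-1}\Tr T_{m^2/d^2}^\nw$, which for trivial character is $m^{k-1}\Tr T_1^\nw(N,k) = m^{k-1}\dim S_k^\nw(\Gamma_0(N))$. First, I would write
\begin{align}
    a_2^\nw(m,N,k) = \frac12\lrb{\lrp{\Tr T_m^\nw(N,k)}^2 - \sum_{d\mid m} d^{k-1}\Tr T_{m^2/d^2}^\nw(N,k)}.
\end{align}
Since $m$ is not a perfect square, neither is $m^2/d^2$ for any $d\mid m$ with $d\neq m$ (indeed $m^2/d^2$ is a perfect square only when $d=m$), so Lemma \ref{lem:m-not-square} gives $\Tr T_{m^2/d^2}^\nw(N,k) = O(2^{\omega(N)}(m^2/d^2)^{k/2}) = O(2^{\omega(N)}m^k/d^k)$ for $d\neq m$, and likewise $\lrp{\Tr T_m^\nw(N,k)}^2 = O(4^{\omega(N)}m^k)$.

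The key point is that the $d=m$ term contributes $-\tfrac12 m^{k-1}\dim S_k^\nw(\Gamma_0(N))$, and $\dim S_k^\nw(\Gamma_0(N))$ grows like $\tfrac{k-1}{12}\psi^\nw(N)$ (this follows from Lemma \ref{lem:m-perfect-square} applied with $m=1$, giving $\dim S_k^\nw(\Gamma_0(N)) = \Tr T_1^\nw(N,k) = \tfrac{k-1}{12}\psi^\nw(N) + O(\sqrt N/\pi_2(N)^2)$). Thus
\begin{align}
    a_2^\nw(m,N,k) = -\frac{m^{k-1}}{2}\lrp{\frac{k-1}{12}\psi^\nw(N) + O\lrp{\frac{\sqrt N}{\pi_2(N)^2}}} + O\lrp{4^{\omega(N)}m^k}.
\end{align}
Dividing through by $m^{k-1}$, the main term is $-\tfrac12\cdot\tfrac{k-1}{12}\psi^\nw(N)$, which since $\psi^\nw(N)\geq N/\pi_1(N)$ by Lemma \ref{lem:psi-new} is bounded below in absolute value by a quantity of size $\asymp kN^{1-\varepsilon}$. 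Meanwhile the error terms, after dividing by $m^{k-1}$, are $O(\sqrt N/\pi_2(N)^2)$ and $O(m\cdot 4^{\omega(N)})$; since $4^{\omega(N)} = O(N^\varepsilon)$ and $m$ is a fixed constant, both are $o(kN^{1-\varepsilon'})$ for suitable small $\varepsilon'$, hence dominated by the main term once $N$ or $k$ is large enough. More precisely, I would normalize by $\psi^\nw(N)$ and argue that $a_2^\nw(m,N,k)/(m^{k-1}\psi^\nw(N)) = -\tfrac{k-1}{24} + \theta(N,k)$ where the error $\theta(N,k)\to 0$ as $N+k\to\infty$, using that $k\geq 2$ so $\tfrac{k-1}{24}$ is bounded away from $0$, together with the $\theta_i(N)\to 0$ statements of Lemma \ref{lem:theta-bounds} to handle the $N$-dependence of the error. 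This forces $a_2^\nw(m,N,k)<0$ for all but finitely many $(N,k)$.

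The main obstacle is bookkeeping rather than conceptual: one must verify that every error term, after the division by $m^{k-1}$, is genuinely smaller than the main term $\tfrac{k-1}{12}\psi^\nw(N)$ uniformly as both $N$ and $k$ vary. The subtlety is that the $(\Tr T_m^\nw)^2$ term carries a factor $4^{\omega(N)}$ (not $2^{\omega(N)}$) and the main term has only $\psi^\nw(N) \geq N/\pi_1(N)$, so one needs the standard fact $4^{\omega(N)} = O(N^\varepsilon)$ to conclude that $4^{\omega(N)}/\psi^\nw(N) \to 0$; this is exactly the content of having $\pi_1(N) = O(N^\varepsilon)$ as recorded in the proof of Lemma \ref{lem:theta-bounds}, so no new estimate is needed. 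One also uses that the $k$-dependence is favorable: the main term is linear in $k$ while the $A_2^\nw$ and $A_3^\nw$ error terms carry no growing power of $k$ (the $U_{k-1}$ and $\min(d,m/d)^{k-1}$ factors are controlled by $m^{(k-1)/2}$, which is absorbed into the $m^{k-1}$ we factored out). Assembling these observations gives the finiteness of the exceptional set.
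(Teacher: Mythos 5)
Your proposal has a genuine mathematical error in its treatment of the terms $\Tr T_{m^2/d^2}^\nw$ for $d \mid m$, $d \neq m$. You claim that since $m$ is not a perfect square, $m^2/d^2$ is also not a perfect square for $d \neq m$, and that $m^2/d^2$ is a perfect square only when $d = m$. This is false: $m^2/d^2 = (m/d)^2$ is \emph{always} a perfect square for every divisor $d$ of $m$. (For instance, $m = 2$, $d = 1$ gives $m^2/d^2 = 4 = 2^2$.) Consequently, you cannot apply Lemma \ref{lem:m-not-square} to these terms; only Lemma \ref{lem:m-perfect-square} applies. In particular, the claimed bound $\Tr T_{m^2/d^2}^\nw(N,k) = O(2^{\omega(N)}m^k/d^k)$ for $d \neq m$ is wrong: these traces each carry a main term of the form $\frac{k-1}{12}\psi^\nw(N)(m/d)^{k-2}$, which grows linearly in $N$ and $k$, not like $2^{\omega(N)}$.

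The reason your conclusion still happens to be true is one of sign: the terms you mistreated contribute positively to the sum $\sum_{d \mid m} d^{k-1}\Tr T_{m^2/d^2}^\nw$, and that sum is subtracted, so the genuine main terms you dropped only make $a_2^\nw$ more negative. But an $O(\cdot)$ error term is an unsigned quantity — if your claimed bound were taken at face value and these contributions could be either sign, the argument would not close, because they are in fact of the same order of magnitude (linear in $k$ and roughly linear in $N$) as the $d = m$ term you retain as your main term. The correct repair, which is what the paper does, is to apply Lemma \ref{lem:m-perfect-square} to every term $T_{m^2/d^2}^\nw$ in the sum (all of which have square index), producing the main contribution $\frac{k-1}{12}\psi^\nw(N) m^{k-2}\sigma_1(m)$ rather than just the $d = m$ piece $\frac{k-1}{12}\psi^\nw(N)m^{k-1}$; then Lemma \ref{lem:m-not-square} is used only on the $\left(\Tr T_m^\nw\right)^2$ term, where $m$ is genuinely non-square. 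Once you make this correction, the rest of your normalization-and-decay argument via the $\theta_i(N)$ goes through exactly as you outlined.
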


\begin{proof}
    Recall from Lemma \ref{lem:a2-coeff-formula} that
\begin{align} \label{eqn:a2}
    a_2^\nw(m,N,k) = \frac{1}{2} \lrb{ \left(\Tr T_m^\nw\right)^2 - \sum_{d \mid m} d^{k-1} \Tr T_{m^2/d^2}^\nw }.
\end{align}
  Since $m^2/d^2$ is a perfect square, we can employ Lemma \ref{lem:m-perfect-square} on the $T^\nw_{m^2/d^2}$ terms in \eqref{eqn:a2} and obtain
\begin{align}
    \sum_{d \mid m} d^{k-1} \Tr T_{m^2/d^2}^\nw & = \sum_{d \mid m} d^{k-1} \lrb{ \frac{k-1}{12} \psi^\nw(N) \lrp{\frac{m^2}{d^2}}^{k/2-1} + O\lrp{\lrp{\frac{m^2}{d^2}}^{k/2}\frac{\sqrt{N}}{\pi_2(N)^2}} } \\
    & = \psi^\nw(N) m^{k - 2} \sum_{d \mid m} \left(\frac{k-1}{12} d + O\lrp{\theta_1(N)}\right) \\ 
    & = \psi^\nw(N) m^{k - 2} \sigma_1(m) \left(\frac{k - 1}{12} + O(\theta_1(N)) \right). \label{eqn:sum-of-square-traces}
\end{align}
Since $m$ is not a square, we can also use Lemma \ref{lem:m-not-square} on the $(\Tr T_m^\nw)^2$ term in \eqref{eqn:a2} to obtain
\begin{align}
    a_2^\nw(m,N,k) & = \frac{1}{2} \lrb{ O\lrp{2^{\omega(N)}m^{k/2}}^2 - \psi^\nw(N) m^{k - 2} \sigma_1(m) \left( \frac{k-1}{12} + O(\theta_1(N)) \right)} \\
    & = \frac{1}{2}\psi^\nw(N)m^{k - 2}\sigma_1(m) \lrb{ -\frac{k-1}{12} + O\lrp{\theta_1(N)} + O\lrp{\theta_2(N)}} \label{eqn:a2new-final-formula-temp}
\end{align}
Now, note that $\frac{1}{2}\psi^\nw(N)m^{k - 2}\sigma_1(m) > 0$ (by Lemma \ref{lem:psi-new}) and $\frac{k-1}{12} \geq \frac{1}{12}$ for all $k \geq 2$. Thus since the $\theta_i(N) \longrightarrow 0$,  the $O(\theta_1(N)) + O(\theta_2(N))$ term from \eqref{eqn:a2new-final-formula-temp} will be $< \frac{1}{12}$ in magnitude and hence $a_2^\nw(m, N, k) < 0$ for sufficiently large $N$. Then for each of the finitely many remaining fixed values of $N$, we also have from \eqref{eqn:a2new-final-formula-temp} that $a_2^\nw(m,N,k) < 0$ for sufficiently large $k$. Thus
$a_2^\nw(m,N,k) < 0$ for all but finitely many pairs $(N, k)$.
\end{proof}

\begin{proposition} \label{prop:square}
    Let $m$ fixed be a perfect square, and consider $N \geq 1$ coprime to $m$ and $k \geq 2$ even. Then $a_2(m,N,k) > 0$ for all but finitely many pairs $(N,k)$.
\end{proposition}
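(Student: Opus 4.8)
The plan is to follow the proof of Proposition \ref{prop:nsquare} closely, the essential difference being that when $m$ is a perfect square the term $(\Tr T_m^\nw)^2$ now contributes a genuine main term rather than lower-order noise. Starting from
$$a_2^\nw(m,N,k) = \frac12\lrb{ (\Tr T_m^\nw)^2 - \sum_{d\mid m} d^{k-1}\Tr T_{m^2/d^2}^\nw }$$
(Lemma \ref{lem:a2-coeff-formula}), I first observe that the computation leading to \eqref{eqn:sum-of-square-traces} used only that each $m^2/d^2$ is a perfect square, so it applies here verbatim and gives
$$\sum_{d\mid m} d^{k-1}\Tr T_{m^2/d^2}^\nw = \psi^\nw(N)\,m^{k-2}\,\sigma_1(m)\lrp{\tfrac{k-1}{12} + O(\theta_1(N))}.$$
The new input is that, $m$ itself being a perfect square, Lemma \ref{lem:m-perfect-square} now also applies directly to $\Tr T_m^\nw(N,k)$, yielding $\Tr T_m^\nw(N,k) = \tfrac{k-1}{12}\psi^\nw(N)m^{k/2-1} + O\lrp{m^{k/2}\sqrt N / \pi_2(N)^2}$.

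Next I would turn this estimate for $\Tr T_m^\nw$ into a one-sided bound. The ratio of its error term to its main term is $O\lrp{\sqrt N / ((k-1)\psi^\nw(N)\pi_2(N)^2)} = O(\theta_1(N))$, which tends to $0$ as $N\to\infty$ uniformly in $k$ by Lemma \ref{lem:theta-bounds}. Hence for all $N$ beyond some bound (independent of $k\geq2$ even) one has $\Tr T_m^\nw(N,k) \geq \tfrac12\cdot\tfrac{k-1}{12}\psi^\nw(N)m^{k/2-1} > 0$, so $(\Tr T_m^\nw)^2 \geq \tfrac14\lrp{\tfrac{k-1}{12}}^2\psi^\nw(N)^2 m^{k-2}$, and similarly $\tfrac{k-1}{12}+O(\theta_1(N)) \leq 2\cdot\tfrac{k-1}{12}$ for $N$ large. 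Combining these with the displayed formula for the subtracted sum gives, for $N$ large,
$$2\,a_2^\nw(m,N,k) \geq \tfrac{k-1}{12}\,\psi^\nw(N)\,m^{k-2}\lrp{ \tfrac14\cdot\tfrac{k-1}{12}\,\psi^\nw(N) - 2\sigma_1(m) }.$$
Since $\psi^\nw(N)\to\infty$ (Lemma \ref{lem:psi-new}) while $\sigma_1(m)$ is a fixed constant, the parenthesized factor is positive once $\psi^\nw(N)$ exceeds a constant depending only on $m$, while the prefactor is always positive; thus $a_2^\nw(m,N,k)>0$ for all $N$ sufficiently large and all even $k\geq2$.

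Finally, for each of the finitely many remaining values of $N$, the big-$O$ constants above are absolute (their only implicit dependence is on the fixed $m$), so for that fixed $N$ the quantity $2a_2^\nw(m,N,k)/m^{k-2}$ equals $\lrp{\tfrac{k-1}{12}}^2\psi^\nw(N)^2$ up to a perturbation bounded by a constant times $k$; being a quadratic in $k$ with positive leading coefficient, it is positive for $k$ large. This yields $a_2^\nw(m,N,k)>0$ for all but finitely many pairs $(N,k)$. The one point requiring care --- the analogue of the "hard part" --- is the uniform-in-$k$ bookkeeping of the error terms now that the dominant term is quadratic in both $N$ and $k$ rather than linear as in Proposition \ref{prop:nsquare}; bounding $\Tr T_m^\nw$ from below first, rather than expanding the square and tracking the cross term and squared-error term separately, is what keeps this manageable, since one then never has to verify directly that those two terms are of smaller order than $\psi^\nw(N)^2 m^{k-2}$.
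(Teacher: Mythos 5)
Your proposal is correct, and it proves the same result via a route that differs from the paper's in a genuine (if modest) way. The paper expands $(\Tr T_m^\nw)^2$ fully, tracking the main term, the cross term, and the squared-error term separately, and ultimately rewrites $a_2^\nw(m,N,k)$ as $(k-1)\psi^\nw(N)^2 m^{k-2}\bigl[\tfrac{k-1}{288}+O(\theta_1(N))+O(\theta_1(N)^2)+O(\theta_4(N))+O(\theta_1(N)\theta_4(N))\bigr]$, then argues the bracket is positive once the $\theta_i(N)$ are small (for large $N$), and handles the finitely many exceptional $N$ by letting $k\to\infty$. You instead observe that Lemma \ref{lem:m-perfect-square} already gives $\Tr T_m^\nw \geq \tfrac12\cdot\tfrac{k-1}{12}\psi^\nw(N)m^{k/2-1}$ once $\theta_1(N)$ is small (uniformly in $k\geq 2$, since the ratio of error to main term carries a $1/(k-1)\leq 1$ factor), square this one-sided bound, and combine with the upper bound $\tfrac{k-1}{12}+O(\theta_1(N))\leq 2\cdot\tfrac{k-1}{12}$ on the bracket in \eqref{eqn:sum-of-square-traces}. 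This sidesteps the cross-term bookkeeping entirely and reduces the large-$N$ step to the transparent inequality
\begin{equation}
2\,a_2^\nw(m,N,k) \geq \tfrac{k-1}{12}\,\psi^\nw(N)\,m^{k-2}\lrp{\tfrac14\cdot\tfrac{k-1}{12}\,\psi^\nw(N) - 2\sigma_1(m)},
\end{equation}
which is positive once $\psi^\nw(N)>96\sigma_1(m)$, a threshold independent of $k$. The trade-off is that your argument yields only a one-sided lower bound rather than the explicit two-sided asymptotic \eqref{eqn:a2new-final-formula-temp2} that the paper records; for the purposes of this proposition that does not matter, and your bookkeeping is cleaner, but the paper's full expansion is the more natural stepping stone toward the explicit, quantitative bounds needed in Section \ref{sec:T2}. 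Your handling of the finitely many remaining $N$ (the quantity $2a_2^\nw/m^{k-2}$ is a quadratic in $k$ with leading coefficient $\psi^\nw(N)^2/144>0$, perturbed by $O_N(k)$) is also correct and matches the spirit of the paper's closing step.
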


\begin{proof}
    By \eqref{eqn:sum-of-square-traces} and Lemma \ref{lem:m-perfect-square},
\begin{align}
    a_2^\nw(m,N,k) & = \frac{1}{2} \Bigg[ \left(\Tr T_m^\nw\right)^2 - \sum_{d \mid m} d^{k-1} \Tr T_{m^2/d^2}^\nw \Bigg]\\
    & = \frac{1}{2} \Bigg[ \left(\frac{k-1}{12} \psi^\nw(N) m^{k/2-1} + O\left(\frac{m^{k/2}\sqrt{N}}{\pi_2(N)^2}\right) \right)^2  \\
    & \qquad \qquad  + \psi^\nw(N) m^{k - 2} \sigma_1(m) \left( -\frac{k-1}{12} + O(\theta_1(N)) \right) \Bigg] \\
    & = \frac{(k - 1)^2}{288}\psi^\nw(N)^2m^{k - 2} + O\lrp{\frac{k-1}{12}\psi^\nw(N) \cdot \frac{m^{k-1}\sqrt{N}}{\pi_2(N)^2}} + O \lrp{\frac{m^k N}{\pi_2(N)^4}}\\
    & \qquad \qquad - \frac{k - 1}{12} \psi^\nw(N) O(m^k) + \psi^\nw(N) O(m^k\theta_1(N)) \\
    & = (k-1)\psi^\nw(N)^2m^{k-2} \Bigg[\frac{k-1}{288} + O(\theta_1(N)) + O\lrp{\theta_1(N)^2} \\
    & \mspace{250mu} + O\lrp{\theta_4(N)} + O\lrp{\theta_1(N)\theta_4(N)} \Bigg] \label{eqn:a2new-final-formula-temp2}
\end{align}
    Now, note that $(k-1)\psi^\nw(N)^2m^{k-2}  > 0$ (by Lemma \ref{lem:psi-new}) and $\frac{(k-1)}{288} \geq \frac{1}{288}$ for all $k \geq 2$. Thus since the $\theta_i(N) \longrightarrow 0$, we have by \eqref{eqn:a2new-final-formula-temp2} that for sufficiently large $N$, $a_2^\nw(m, N, k) > 0$. Then for each of the finitely many remaining values of $N$, we also have by \eqref{eqn:a2new-final-formula-temp2} that $a_2^\nw(m, N, k) > 0$ for sufficiently large $k$. Thus $a_2^\nw(m, N, k) > 0$ for all but finitely many pairs $(N, k)$.
\end{proof}
Propositions \ref{prop:nsquare} and \ref{prop:square} combine to imply Theorem \ref{thm:main-theorem} for trivial character.

\section{Computing \texorpdfstring{$a_2^{\nw}(2,N,k)$}{a2\^new(2,N,k)} and \texorpdfstring{$a_2^{\nw} (4,N,k)$}{a2\^new(4,N,k)} } \label{sec:T2}

To illustrate the method given in Section \ref{sec:main-thm}, we now compute the specific pairs $(N,k)$ for which $a_2^\nw(2,N,k)$ and $a_2^\nw(4,N,k)$ vanish, verifying Theorem \ref{thm:m=2-4-Theorem}.

\subsection{The Nonvanishing of \texorpdfstring{$a_2^{\nw}(2,N,k)$}{a2\^new(2,N,k)}}

From Lemma \ref{lem:a2-coeff-formula},
\begin{equation} \label{eqn:a2-new}
    a_2(2,N,k)^\nw = \frac{1}{2} \lrb{ (\Tr T_2^\nw)^2 - \Tr T_4^\nw - 2^{k-1} \Tr T_1^\nw }.
\end{equation}

We first bound the $\Tr T_2^\nw$ term of \eqref{eqn:a2-new} explicitly.

\begin{lemma} \label{lem:T2-square-bound}
    We have the following bound:
    \begin{equation}
        \frac{(\Tr T_2^\nw)^2}{\psi^\nw(N) 2^k} \leq 32\,\theta_2(N) + 16\sqrt{2}\,\theta_3(N) + 4\,\theta_4(N),
    \end{equation}
    where the $\theta_i(N)$ are as defined in Lemma \ref{lem:theta-bounds}.
\end{lemma}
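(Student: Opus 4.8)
The plan is to bound $\Tr T_2^\nw(N,k)$ term-by-term using the decomposition \eqref{eqn:new-eichler-selberg-trace-formula} together with the explicit bounds from Section \ref{sec:Ai-lincomb-multiplicative}, then square the result and divide through by $\psi^\nw(N)2^k$. Since $m=2$ is not a perfect square, the $A_1^\nw(2,N,k)$ term vanishes, so I only need to control $A_2^\nw$, $A_3^\nw$, and $A_4^\nw$. The idea is that each of these three is, after dividing by $\psi^\nw(N)2^{k/2}$ (roughly), dominated by one of the $\theta_i(N)$, so that $(\Tr T_2^\nw)^2/(\psi^\nw(N)2^k)$ becomes a sum of products $\theta_i\theta_j$ — but actually, looking at the claimed bound, the right-hand side is \emph{linear} in the $\theta_i$, which means I should bound $(\Tr T_2^\nw)^2 \le 3\bigl((A_2^\nw)^2 + (A_3^\nw)^2 + (A_4^\nw)^2\bigr)$ (or expand the square directly) and then observe that each squared term, divided by $\psi^\nw(N)2^k$, leaves one factor of $\psi^\nw(N)$ in a denominator — i.e.\ produces a $\theta_i(N)$ — with the remaining factor of the bound being an explicit constant times $2^{k}$ over $2^k$.

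Concretely, the first step is to extract explicit (not just big-$O$) constants. For $m=2$: in $A_2^\nw$, the relevant $t$ with $t^2<8$ are $t\in\{0,\pm1,\pm2\}$, giving discriminants $-8,-7,-4$ and $n=1$ throughout, with $h_w(-8)=h_w(-7)=1$, $h_w(-4)=\tfrac12$; combined with $|U_{k-1}(t,2)|\le 2\cdot 2^{(k-1)/2}/\sqrt{8-t^2}$ from \eqref{eqn:U-bounds} and $|\mu_{t,1,2}^\nw(N)|\le 2^{\omega(N)}\psi(1)2^{\omega(8-t^2)}\sqrt{8-t^2}$ from Lemma \ref{lem:mu-multiplicative}, the factors of $\sqrt{8-t^2}$ cancel and $|A_2^\nw(2,N,k)| \le C_2\, 2^{k/2}\,2^{\omega(N)}$ for an explicit small constant $C_2$. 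For $A_3^\nw$: $d\mid 2$ gives $d\in\{1,2\}$ with $\min(d,2/d)=1$ and $h=|d-2/d|=1\ne0$ in both cases, so by Lemma \ref{lem:sigma-multiplicative} $|\Sigma_{2,d}^\nw(N)|\le 1\cdot 4^{\omega(1)}=1$ (since $\omega(1)=0$; here I should double-check the $h=1$ case of that lemma gives exactly $1$), whence $|A_3^\nw(2,N,k)|\le C_3\,2^{(k-1)/2}$. For $A_4^\nw$: from \eqref{eqn:A4-big-O}, $|A_4^\nw(2,N,k)|\le\sigma_1(2)=3$. Then $|\Tr T_2^\nw|\le C_2 2^{k/2}2^{\omega(N)} + C_3 2^{(k-1)/2} + 3$, and squaring and using $2^{\omega(N)}\le\psi^\nw(N)\pi_1(N)/N$-type manipulations — more precisely, dividing by $\psi^\nw(N)2^k$ and recognizing $2^{\omega(N)}/\psi^\nw(N)=\theta_3(N)$, $1/\psi^\nw(N)=\theta_4(N)$, and $4^{\omega(N)}/\psi^\nw(N)=\theta_2(N)$ (the last coming from the cross term $(2^{\omega(N)})^2=4^{\omega(N)}$) — yields precisely $32\theta_2(N)+16\sqrt2\,\theta_3(N)+4\theta_4(N)$ after tracking the constants.

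The main obstacle will be bookkeeping the numerical constants so that they come out to exactly $32$, $16\sqrt2$, $4$ rather than merely $O(\cdot)$; in particular one must be careful that the cross terms in $(a+b+c)^2$ are absorbed into the three displayed terms (e.g.\ using $2ab\le a^2+b^2$ or simply that $\theta_3(N)\le\sqrt{\theta_2(N)\theta_4(N)}$ and $\theta_i(N)\le 1$ for all $N$, so lower-order contributions fold into the stated coefficients), and that the factor $\sqrt{|D|}\cdot|U_{k-1}|$ is bounded sharply enough. A secondary subtlety is confirming that $\psi^\nw(N)>0$ (Lemma \ref{lem:psi-new}) so the division is legitimate, and that all $\theta_i(N)\le 1$ — which holds since $\psi^\nw(N)\ge 1$ and, for $\theta_2,\theta_3$, one can check $4^{\omega(N)}\le\psi^\nw(N)$ and $2^{\omega(N)}\le\psi^\nw(N)$ directly from the local formulas in the proof of Lemma \ref{lem:psi-new} (as $\psi^\nw(p^r)\ge p-1\ge 1$ and grows, while each prime contributes a factor $\le$ its $\psi^\nw$ value for $p\ge 3$, using that $N$ is odd here). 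Once the constants are pinned down, the inequality follows by direct substitution.
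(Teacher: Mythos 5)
Your overall strategy — use the Eichler--Selberg decomposition with $A_1^\nw(2,N,k)=0$, compute explicit bounds for $A_2^\nw$, $A_3^\nw$, $A_4^\nw$ via the lemmas of Section \ref{sec:Ai-lincomb-multiplicative}, then square and divide by $\psi^\nw(N)2^k$ — is exactly the paper's route, and your bound $|A_2^\nw(2,N,k)|\leq 8\cdot 2^{(k-1)/2}2^{\omega(N)}$ is correct. But three of your auxiliary claims would derail the constants.

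First, $|A_3^\nw(2,N,k)|\leq C_3\cdot 2^{(k-1)/2}$ is wrong: you correctly observe $\min(d,2/d)=1$ for both $d\mid 2$, so $\min(d,2/d)^{k-1}=1$ identically and $|A_3^\nw(2,N,k)|=|\Sigma_{2,1}^\nw(N)|\leq 1$, a pure constant with no $k$-dependence. This matters because the right move is to group the two \emph{constant} terms, $|A_3^\nw|+|A_4^\nw|\leq 1+3=4$, and write $|\Tr T_2^\nw|\leq 8\cdot 2^{(k-1)/2}2^{\omega(N)}+4$.

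Second, the inequality $(\Tr T_2^\nw)^2\leq 3\bigl((A_2^\nw)^2+(A_3^\nw)^2+(A_4^\nw)^2\bigr)$ gives $96\,\theta_2(N)$, a factor of $3$ too large. You must expand $(a+4)^2=a^2+8a+16$ directly with $a=8\cdot 2^{(k-1)/2}2^{\omega(N)}$; then $a^2/(\psi^\nw(N)2^k)=32\,\theta_2(N)$ exactly, the cross term $8a/(\psi^\nw(N)2^k)=64\cdot 2^{-(k+1)/2}\theta_3(N)\leq 16\sqrt{2}\,\theta_3(N)$ for $k\geq 2$, and $16/(\psi^\nw(N)2^k)\leq 4\,\theta_4(N)$. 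Note that no ``absorption'' of cross terms is needed: the cross term \emph{is} the $\theta_3$ contribution, not something to fold away.

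Third, the claim $\theta_i(N)\leq 1$ for all $N$ is false — e.g.\ $\psi^\nw(3)=\psi(3)-2=2$, so $\theta_2(3)=4^{\omega(3)}/\psi^\nw(3)=4/2=2>1$. Fortunately you do not need this claim once you follow the direct-expansion route above, but as written your absorption argument would not be justified. Fix these three points and you recover the paper's proof verbatim.
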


\begin{proof}
    Since $2$ is not a perfect square, $A_1^\nw(2,N,k)=0$ by \eqref{eqn:A1m-new-formula}.
    
    Then, by \eqref{eqn:A2new-explicit-bound}  
    and the values of $h_w$ from Table \ref{table:weighted-class-numbers}, 
    \begin{align}
        |A_2^\nw(2, N, k)| & \leq 2^{(k-1)/2}2^{\omega(N)} \cdot \sum_{t^2 < 8} 2^{\omega(8 - t^2)} \sum_n h_w \left( \frac{t^2 - 8}{n^2} \right) \psi(n) \\
        & \leq 2^{(k-1)/2} 2^{\omega(N)} \cdot  \left[ 2^{\omega(8)} \cdot 1 \cdot \psi(1) + 2 \cdot 2^{\omega(7)} \cdot 1 \cdot \psi(1) + 2 \cdot 2^{\omega(4)} \cdot \frac{1}{2} \cdot \psi(1) \right]\\
        & = 8 \cdot 2^{(k-1)/2} \cdot 2^{\omega(N)}.
    \end{align}

    Then by \eqref{eqn:A3m-formula} and Lemma \ref{lem:sigma-multiplicative}, and using the fact that the $d=d_0$ and $d=2/d_0$ terms in the sum coincide,
    \begin{align}
        \lrabs{A_3^\nw(2,N,k)} &= \lrabs{\frac{1}{2} \sum_{d \mid 2} \min (d,2/d)^{k-1} \cdot  \Sigma_{2,d}^\nw(N)} = \lrabs{\Sigma_{2,1}^\nw(N)} \leq 1 \cdot 4^{\omega(1)} = 1.
    \end{align}

    Finally, by \eqref{eqn:A4-big-O}, $|A_4^\nw(2,N,k)| \leq \sigma_1(2) = 3$. Thus 
    \begin{align}
        \frac{(\Tr T_2^\nw)^2}{\psi^\nw(N)2^k} &\leq \frac{1}{\psi^\nw(N)2^k}\lrp{|A_2^\nw(2,N,k)| + |A_3^\nw(2,N,k)| + |A_4^\nw(2,N,k)|}^2 \\ &\leq \frac{1}{\psi^\nw(N)2^k} \lrp{8 \cdot 2^{(k-1)/2} \cdot 2^{\omega(N)} + 4}^2 \\
        & = 64 \frac{\theta_2(N)}{2} + 64 \frac{\theta_3(N)}{2^{(k+1)/2}} + 16\frac{\theta_4(N)}{2^k} \\
        & \leq 32\,\theta_2(N) + 16\sqrt{2} \, \theta_3(N) + 4\,\theta_4(N),
    \end{align}
    as desired, since $k \geq 2$.
\end{proof}

We now bound the error terms of the $\Tr T_4^\nw$ term of \eqref{eqn:a2-new}.

\begin{lemma} \label{lem:T4-A14-bound}
    We have the following bound:
    \begin{equation}
        \lrabs{\frac{\Tr T_4^\nw - A_1^\nw(4,N,k)}{\psi^\nw(N) 2^k}} \leq \frac{1}{4} \theta_1(N) + \frac{41}{2} \theta_3(N) + \frac{19}{4}\theta_4(N).
    \end{equation}
\end{lemma}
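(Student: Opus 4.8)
The plan is to bound $\Tr T_4^\nw - A_1^\nw(4,N,k)$ by controlling each of the remaining trace formula terms $A_2^\nw(4,N,k)$, $A_3^\nw(4,N,k)$, and $A_4^\nw(4,N,k)$ explicitly, in the same spirit as Lemma \ref{lem:T2-square-bound}, and then dividing through by $\psi^\nw(N) 2^k$ and collecting the resulting $\theta_i(N)$ contributions. Since $m = 4$ is a perfect square, the term $A_1^\nw(4,N,k)$ does not vanish (it is the main term, here being subtracted off), so after removing it we are left with $\Tr T_4^\nw - A_1^\nw(4,N,k) = -A_2^\nw(4,N,k) - A_3^\nw(4,N,k) + A_4^\nw(4,N,k)$, and it suffices to bound $|A_2^\nw| + |A_3^\nw| + |A_4^\nw|$ and divide by $\psi^\nw(N) 2^k$.

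First I would bound $A_2^\nw(4,N,k)$ via \eqref{eqn:A2new-explicit-bound}: the relevant $t$ satisfy $t^2 < 16$, i.e.\ $t \in \{0, \pm 1, \pm 2, \pm 3\}$, so one sums $2^{\omega(16-t^2)}$ times $\sum_n h_w((t^2-16)/n^2)\psi(n)$ over these $t$ (remembering $t$ and $-t$ coincide), reading the needed weighted class numbers $h_w(-16), h_w(-15), h_w(-12), h_w(-7)$ — together with the divisor $n$ contributions (e.g.\ for $t^2 - 16 = -16$ one also has $n = 2$ giving $h_w(-4)$) — off Table \ref{table:weighted-class-numbers}. This yields a bound of the shape $C \cdot 2^{(k-1)/2} \cdot 2^{\omega(N)}$ for an explicit constant $C$. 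Next, $A_3^\nw(4,N,k)$: here $d$ ranges over divisors of $4$, so $d \in \{1,2,4\}$ with $d=1$ and $d=4$ coinciding; one applies Lemma \ref{lem:sigma-multiplicative} to $\Sigma_{4,1}^\nw$ (where $h = |1-4| = 3 \neq 0$, giving bound $3 \cdot 4^{\omega(3)} = 12$) and to $\Sigma_{4,2}^\nw$ (where $h = |2-2| = 0$, giving bound $\sqrt{N}/\pi_2(N)^2$), weighted by $\min(d,4/d)^{k-1}$, namely $1^{k-1} = 1$ for $d=1$ and $2^{k-1}$ for $d=2$. This is the one place the $\theta_1$ term enters. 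Finally $A_4^\nw(4,N,k)$ is bounded by $\sigma_1(4) = 7$ via \eqref{eqn:A4-big-O}.

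Assembling: one has $|A_2^\nw| \le C \cdot 2^{(k-1)/2} 2^{\omega(N)}$, $|A_3^\nw| \le \tfrac12(2^{k-1} \cdot \sqrt{N}/\pi_2(N)^2 + 12)$, and $|A_4^\nw| \le 7$, so
\[
\frac{|\Tr T_4^\nw - A_1^\nw(4,N,k)|}{\psi^\nw(N) 2^k} \le \frac{1}{\psi^\nw(N) 2^k}\left( C \cdot 2^{(k-1)/2} 2^{\omega(N)} + \frac{2^{k-1}}{2}\cdot\frac{\sqrt{N}}{\pi_2(N)^2} + 13\right).
\]
Dividing, the $A_3^\nw$ main part contributes $\tfrac14 \cdot \sqrt{N}/(\psi^\nw(N)\pi_2(N)^2) = \tfrac14\theta_1(N)$; the $A_2^\nw$ part contributes a multiple of $2^{\omega(N)}/(\psi^\nw(N) 2^{(k+1)/2})$, which for $k \ge 2$ is at most a constant times $\theta_3(N)$; and the constant terms contribute a multiple of $\theta_4(N)$, again bounded using $k \ge 2$. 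Choosing the explicit constant $C$ carefully and using $2^{-(k+1)/2} \le 1/2$ and $2^{-k} \le 1/4$ should land exactly on $\tfrac14\theta_1(N) + \tfrac{41}{2}\theta_3(N) + \tfrac{19}{4}\theta_4(N)$. The main obstacle is purely arithmetic bookkeeping: getting the constant in the $A_2^\nw(4,N,k)$ bound exactly right (correctly enumerating all valid pairs $(t,n)$ and their $h_w$ and $\psi(n)$ values from the table) so that the coefficients after division match the stated $\tfrac{41}{2}$ and $\tfrac{19}{4}$; there is no conceptual difficulty beyond what Lemma \ref{lem:T2-square-bound} already illustrates.
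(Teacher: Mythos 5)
Your proposal follows exactly the paper's route: decompose $\Tr T_4^\nw - A_1^\nw(4,N,k)$ as $-A_2^\nw - A_3^\nw + A_4^\nw$, bound each term explicitly, divide by $\psi^\nw(N)2^k$, and read off the $\theta_i(N)$ coefficients. There are, however, two arithmetic slips that would prevent you from landing on the stated constants if carried out literally. First, for $m=4$ the factor $m^{(k-1)/2}$ in \eqref{eqn:A2new-explicit-bound} equals $4^{(k-1)/2}=2^{k-1}$, not $2^{(k-1)/2}$ as you wrote; the latter is only correct for $m=2$, and you seem to have imported it from Lemma \ref{lem:T2-square-bound}. The correct bound is $\lrabs{A_2^\nw(4,N,k)}\le 41\cdot 2^{k-1}\cdot 2^{\omega(N)}=\tfrac{41}{2}\cdot 2^k\cdot 2^{\omega(N)}$, which then divides by $\psi^\nw(N)2^k$ cleanly with no residual $k$-dependence, giving the coefficient $\tfrac{41}{2}$ on $\theta_3(N)$ exactly — unlike the $m=2$ case, no further use of $k\ge 2$ is required on this term.

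Second, in $A_3^\nw(4,N,k)$ the divisors $d=1$ and $d=4$ coincide, so the $\tfrac12$ in \eqref{eqn:A3m-formula} combines with the factor of $2$ from this pair to give a total contribution $1^{k-1}\Sigma_{4,1}^\nw(N)$ with coefficient $1$, not $\tfrac12$. The constant piece is therefore $3\cdot 4^{\omega(3)}=12$, not $6$; combined with $\lrabs{A_4^\nw}\le\sigma_1(4)=7$, the constant contribution is $12+7=19$, which after $2^{-k}\le\tfrac14$ yields $\tfrac{19}{4}\theta_4(N)$. Your figure of $13$ would give only $\tfrac{13}{4}\theta_4(N)$. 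Once these two arithmetic points are corrected, your proof coincides with the paper's.
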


\begin{proof}

    Using the bound of \eqref{eqn:A2new-explicit-bound} and the values of $h_w$ given in Table \ref{table:weighted-class-numbers},
    \begin{align}
        A_2^\nw(4,N,k) & \leq 4^{(k-1)/2}2^{\omega(N)} \sum_{t^2 < 16} 2^{\omega(16 - t^2)} \sum_n h_w \left( \frac{t^2 - 16}{n^2} \right) \psi(n)\\
        & \leq 4^{(k-1)/2} \cdot 2^{\omega(N)} \left[2^{\omega(16)}\left(1 \cdot \psi(1) + \frac{1}{2} \cdot \psi(2)\right) + 2 \cdot 2^{\omega(15)} \cdot 2 \cdot \psi(1) \right. \\
        & \qquad \left. + 2 \cdot 2^{\omega(12)} \left(1 \cdot \psi(1) + \frac{1}{3} \cdot \psi(2)\right) + 2 \cdot 2^{\omega(7)} \cdot 1 \cdot \psi(1) \right] \\
        & = \frac{41}{2} \cdot 2^k \cdot 2^{\omega(N)}.
    \end{align}

    Then by \eqref{eqn:A3m-formula} and Lemma \ref{lem:sigma-multiplicative},
    \begin{align}
        \lrabs{A_3^\nw(4,N,k)} &= \lrabs{\frac{1}{2} \sum_{d \mid 4} \min (d,4/d)^{k-1} \cdot \Sigma_{4,d}^\nw(N)} \\
        &= \lrabs{\Sigma_{4,1}^\nw(N) + \frac12 2^{k-1} \Sigma_{4,2}^\nw(N)} \\
        &\leq 3 \cdot 4^{\omega(3)} + \frac12 2^{k-1} \frac{\sqrt{N}}{\pi_2(N)^2} \\
        & = 12 + 2^{k-2} \frac{\sqrt{N}}{\pi_2(N)^2}.
    \end{align}
    
    Finally, by \eqref{eqn:A4-big-O} we have $|A_4^\nw(4,N,k)| \leq \sigma_1(4) = 7.$ Now, since $k \geq 2$, we see that
    \begin{align}
        \lrabs{\frac{\Tr T_4^\nw - A_1^\nw(4,N,k)}{\psi^\nw(N) 2^k}} & \leq \frac{|A_2^\nw(4, N, k)| + |A_3^\nw(4, N, k)| + |A_4^\nw(4, N, k)|}{\psi^\nw(N) 2^k} \\
        & \leq \frac{1}{\psi^\nw(N)} \left( \frac{41}{2} \cdot 2^{\omega(N)} + 3 + \frac{1}{4} \frac{\sqrt{N}}{\pi_2(N)^2} + \frac{7}{4} \right) \\
        & = \frac{41}{2}  \theta_3(N) + \frac{1}{4} \theta_1(N) + \frac{19}{4}\theta_4(N), \\
    \end{align}
    as desired.
\end{proof}

We now bound the error terms of the $\Tr T_1^\nw$ term of \eqref{eqn:a2-new}.

\begin{lemma} \label{lem:T1-A11-bound} 
     We have the following bound:
    \begin{equation}
        \lrabs{\frac{\Tr T_1^\nw - A_1^\nw(1,N,k)}{\psi^\nw(N)}} \leq \frac{1}{2}  \theta_1(N) + \frac{7}{3}  \theta_3(N) + \theta_4(N).
    \end{equation}
\end{lemma}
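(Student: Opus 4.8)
The plan is to follow the same template as the proofs of Lemmas \ref{lem:T2-square-bound} and \ref{lem:T4-A14-bound}, specialized to $m = 1$. Starting from the new-subspace trace formula \eqref{eqn:new-eichler-selberg-trace-formula}, write $\Tr T_1^\nw(N,k) = A_1^\nw(1,N,k) - A_2^\nw(1,N,k) - A_3^\nw(1,N,k) + A_4^\nw(1,N,k)$; since $1$ is a perfect square the term $A_1^\nw(1,N,k)$ is the main term, and it remains to bound $\lrp{|A_2^\nw(1,N,k)| + |A_3^\nw(1,N,k)| + |A_4^\nw(1,N,k)|}/\psi^\nw(N)$ from above. (As a sanity check, $T_1^\nw$ is the identity on $S_k^\nw(\Gamma_0(N))$, so this lemma is really an error estimate for the new-subspace dimension against $\tfrac{k-1}{12}\psi^\nw(N)$.)

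First, bound $A_2^\nw(1,N,k)$ via \eqref{eqn:A2new-explicit-bound}. With $m = 1$ the outer sum runs over $t$ with $t^2 < 4$, i.e.\ $t \in \{-1,0,1\}$: the $t = 0$ term carries discriminant $-4$ (the index $n = 2$ is inadmissible, since $-1 \not\equiv 0,1 \pmod 4$) and the coinciding pair $t = \pm 1$ carries discriminant $-3$. Reading $h_w(-4) = \tfrac12$ and $h_w(-3) = \tfrac13$ from Table \ref{table:weighted-class-numbers} and using $2^{\omega(4)} = 2^{\omega(3)} = 2$ and $\psi(1) = 1$ gives $|A_2^\nw(1,N,k)| \le \tfrac73 \, 2^{\omega(N)}$. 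Next, for $A_3^\nw(1,N,k)$ the only divisor of $1$ is $d = 1$, for which $h := |d - 1/d| = 0$, so \eqref{eqn:A3m-formula} together with the $h = 0$ case of Lemma \ref{lem:sigma-multiplicative} yields $|A_3^\nw(1,N,k)| = \tfrac12 |\Sigma_{1,1}^\nw(N)| \le \tfrac12 \sqrt{N}/\pi_2(N)^2$. Finally, $|A_4^\nw(1,N,k)| \le \sigma_1(1) = 1$ by \eqref{eqn:A4-big-O}.

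Assembling these three bounds with the triangle inequality, dividing through by $\psi^\nw(N)$, and recognizing $\theta_1(N) = \sqrt{N}/(\psi^\nw(N)\pi_2(N)^2)$, $\theta_3(N) = 2^{\omega(N)}/\psi^\nw(N)$, and $\theta_4(N) = 1/\psi^\nw(N)$ from Lemma \ref{lem:theta-bounds} gives
\[
\lrabs{\frac{\Tr T_1^\nw - A_1^\nw(1,N,k)}{\psi^\nw(N)}} \le \tfrac12 \theta_1(N) + \tfrac73 \theta_3(N) + \theta_4(N),
\]
as claimed. I do not expect any real obstacle here: the computation is routine bookkeeping, and the only points demanding attention are the enumeration of admissible pairs $(t,n)$ in the $A_2^\nw$ sum and retaining the factor $\tfrac12$ in $A_3^\nw$ (unlike in Lemma \ref{lem:T2-square-bound}, here it is not cancelled, since $d = 1$ is the unique divisor of $m = 1$ and equals $m/d$).
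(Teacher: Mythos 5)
Your proof is correct and takes essentially the same approach as the paper: specialize the explicit bound \eqref{eqn:A2new-explicit-bound} to $m=1$ using $h_w(-4)=\tfrac12$ and $h_w(-3)=\tfrac13$, invoke the $h=0$ case of Lemma \ref{lem:sigma-multiplicative} for $A_3^\nw$, and use $\sigma_1(1)=1$ for $A_4^\nw$, then divide by $\psi^\nw(N)$. The careful enumeration of admissible $(t,n)$ and the remark about retaining the $\tfrac12$ in the $A_3^\nw$ term (since $d=1=m/d$ is the unique divisor) match the paper's reasoning exactly.
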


\begin{proof}
    Again using the bound from \eqref{eqn:A2new-explicit-bound} and the values of $h_w$ from Table \ref{table:weighted-class-numbers}, 
    \begin{align}
         \lrabs{A^\nw_2(1, N, k)} 
         &\leq 2^{\omega(N)} \sum_{t^2 < 4} 2^{\omega(4 - t^2)} \sum_n h_w \left( \frac{t^2 - 4}{n^2} \right) \psi(n) \\ 
         & =  2^{\omega(N)} \lrb{2^{\omega(4)} \cdot \frac{1}{2} \cdot  \psi(1)  + 2 \cdot 2^{\omega(3)} \cdot \frac{1}{3} \cdot \psi(1)}  \\
         & = \frac{7}{3} \cdot 2^{\omega(N)}.
    \end{align}
    Then, by \eqref{eqn:A3m-formula} and Lemma \ref{lem:sigma-multiplicative},
    \begin{align}
        \lrabs{A^\nw_{3,1}} &= \frac{1}{2}\lrabs{\sum_{d\mid1}\min\left(d,1/d \right)^{k-1}\Sigma^\nw_{1,d}} = \frac{1}{2}\lrabs{\Sigma^\nw_{1,1}} \leq \frac{1}{2}\frac{\sqrt{N}}{\pi_2(N)^2}.
    \end{align}
    Finally, $\lrabs{A^\nw_4(1, N, k)} \leq \sigma_1(1) = 1$ by \eqref{eqn:A4-big-O}. Thus by \eqref{eqn:new-eichler-selberg-trace-formula}, 
    \begin{align}
        \lrabs{\frac{\Tr T_1^\nw - A_1^\nw(1,N,k)}{\psi^\nw(N)}} &= \lrabs{\frac{-A^\nw_2-A^\nw_3+A^\nw_4}{\psi^\nw(N)}} \\
        & \leq \frac{\lrabs{A^\nw_2} + \lrabs{A^\nw_3}+ \lrabs{A^\nw_4}}{\psi^\nw(N)} \\
        &\leq\frac{\frac{7}{3} 2^{\omega(N)} + \frac{1}{2}\frac{\sqrt{N}}{\pi_2(N)^2} + 1}{\psi^\nw(N)} \\
        & \leq \frac{7}{3}\theta_3(N) + \frac{1}{2}\theta_1(N) + \theta_4(N), \\
    \end{align}
    as desired.    
\end{proof}

Before proving the first part of Theorem \ref{thm:m=2-4-Theorem}, we give explicit bounds for each of the $\theta_i(N)$ defined in Lemma \ref{lem:theta-bounds}. 

\begin{lemma} \label{lem:computable-theta-bounds}
We have the following bounds for the $\theta_i(N)$ defined in Lemma \ref{lem:theta-bounds}:
    \begin{equation}
    \begin{alignedat}{4}
        \theta_1(N) &\leq \frac{1}{\sqrt{N}}, 
        \qquad\qquad & \theta_2(N) &\leq \frac{1304.3}{N^{37/64}}, \\
        \theta_3(N) &\leq \frac{125.28}{N^{25/32}}, 
        & \theta_4(N) &\leq \frac{12.033}{N^{63/64}}.
    \end{alignedat}
    \end{equation}
\end{lemma}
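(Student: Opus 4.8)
The plan is to bound each $\theta_i(N)$ by reducing everything to a product over the primes $p \mid N$ and then checking that each local factor contributes at most a fixed power of $p$ (times a constant), after which the worst case is realized by taking the product over the smallest primes. Recall from Lemma \ref{lem:psi-new} that $\psi^\nw(N) \geq N/\pi_1(N)$ with $\pi_1(N) = \prod_{p\mid N}\lrp{1+\frac{p+1}{p^2-p-1}}$, so $1/\psi^\nw(N) \leq \pi_1(N)/N$. Thus for $\theta_4$ I would write $\theta_4(N) \leq \pi_1(N)/N$, and it suffices to bound $\pi_1(N)/N^{1/64}$, i.e.\ to show $\prod_{p\mid N}\lrp{1+\frac{p+1}{p^2-p-1}} \leq 12.033\, N^{63/64}$. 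The local factor at $p$ is $1+\frac{p+1}{p^2-p-1}$, which is less than $p^{63/64}$ for all primes $p \geq p_0$ for some small $p_0$ (since the left side tends to $1$ while $p^{63/64}\to\infty$); for the finitely many small primes below $p_0$ where the inequality fails, one records the finite ratio $\lrp{1+\frac{p+1}{p^2-p-1}}\big/p^{63/64}$ and multiplies these together to get the constant $12.033$. The same template handles $\theta_1(N) = \frac{\sqrt N}{\psi^\nw(N)\pi_2(N)^2} \leq \frac{\pi_1(N)}{\sqrt N\, \pi_2(N)^2}$: here I would check that the local factor $\frac{1+\frac{p+1}{p^2-p-1}}{(1+\frac1{p-1})^2} \leq 1$ for every prime $p$ (a short elementary verification — for $p=2$ the numerator is $2$ and $(1+1)^2 = 4$, and for larger $p$ both numerator $\to 1$ and denominator $\to 1$ but the denominator dominates; one checks this term-by-term), giving $\theta_1(N) \leq 1/\sqrt N$ with no constant needed.

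For $\theta_2$ and $\theta_3$ the only new ingredient is handling $2^{\omega(N)}$ and $4^{\omega(N)}$ as products $\prod_{p\mid N} 2$ and $\prod_{p\mid N} 4$. So $\theta_2(N) = \frac{4^{\omega(N)}}{\psi^\nw(N)} \leq \frac{\pi_1(N)\cdot 4^{\omega(N)}}{N} = \frac1N\prod_{p\mid N}\lrp{4\lrp{1+\tfrac{p+1}{p^2-p-1}}}$, and I want this $\leq 1304.3\, N^{-37/64}$, i.e.\ $\prod_{p\mid N} 4\lrp{1+\tfrac{p+1}{p^2-p-1}} \leq 1304.3\, N^{27/64}$. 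Again the local factor $4\lrp{1+\tfrac{p+1}{p^2-p-1}}$ exceeds $p^{27/64}$ only for finitely many small $p$ (eventually it is close to $4$ while $p^{27/64}\to\infty$), and the product of the finitely many defective ratios yields $1304.3$. Identically, $\theta_3(N) \leq \frac1N\prod_{p\mid N}\lrp{2\lrp{1+\tfrac{p+1}{p^2-p-1}}} \leq 125.28\, N^{-25/32}$ by the same computation with $2$ in place of $4$ and exponent $7/32 = 14/64$.

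Concretely, I would organize the proof as: (i) state the reduction $\theta_i(N) \leq \frac{c_i^{\omega(N)}\,\pi_1(N)}{N^{\alpha_i}}$ (with $c_1$ absorbing the $\pi_2^2$ factor), using Lemma \ref{lem:psi-new}; (ii) for each $i$, define $g_i(p)$ to be the ratio of the local factor of $c_i^{\omega(N)}\pi_1(N)$ over $p^{1-\alpha_i-\text{(target exponent)}}$, observe $g_i(p) \leq 1$ for all $p$ above an explicit threshold, and compute $C_i := \prod_{p < \text{threshold}} g_i(p)$ numerically; (iii) conclude $\theta_i(N) \leq C_i\, N^{-(\text{target exponent})}$. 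The main obstacle — really the only one — is the bookkeeping: getting the exact thresholds and the numerical constants $1304.3$, $125.28$, $12.033$ right, since they depend on exactly which small primes are "defective" and on a careful (rounding-safe) evaluation of a short finite product. There is no conceptual difficulty; the estimate $\prod_{p\mid N}(\cdots) \leq C N^{\varepsilon}$ is the same multiplicativity-plus-finite-exceptions argument already invoked for $2^{\omega(N)} = O(N^\varepsilon)$ in Lemma \ref{lem:theta-bounds}, just made effective.
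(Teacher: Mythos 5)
Your plan matches the paper's proof: reduce each $\theta_i$ to a product over primes via Lemma \ref{lem:psi-new}, check that the local factor is bounded by $p^{\alpha}$ for the target exponent $\alpha$ once $p$ exceeds a small threshold, and collect the finitely many exceptional local ratios into the numerical constant. The only organizational difference is that the paper bounds $\pi_1(N) \leq 12.033\,N^{1/64}$ and $2^{\omega(N)} \leq 10.411\,N^{13/64}$ \emph{separately} and then multiplies (so $1304.3 = 10.411^2\cdot 12.033$ and $125.28 = 10.411\cdot 12.033$), whereas you propose to bound the merged local factor $c\bigl(1+\tfrac{p+1}{p^2-p-1}\bigr)$ against $p^{\alpha}$ in one pass; the joint optimization actually yields slightly smaller constants, which of course still certifies the stated inequalities. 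One typo to correct: in your $\theta_4$ paragraph the goal is $\pi_1(N) \leq 12.033\,N^{1/64}$, so the local factor $1+\tfrac{p+1}{p^2-p-1}$ should be compared against $p^{1/64}$, not $p^{63/64}$ as written (the exponent $63/64$ belongs to the final bound on $\theta_4$, not to the intermediate bound on $\pi_1$).
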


\begin{proof}
    First we will prove explicit bounds on the multiplicative functions $\pi_1(N)$ and $2^{\omega(N)}$. 
    In particular, 
    following the method of \cite[Lemma 2.4]{ross}, 
    we show that
    \begin{align}
        \pi_1(N) \leq 12.033 \cdot N^{1/64} 
        \qquad  \text{and} \qquad
        2^{\omega(N)} \leq 10.411 \cdot N^{13/64}. 
    \end{align}
    Recall from Lemma \ref{lem:psi-new} that 
        $\pi_1(N) = \prod_{p \mid N} \lrp{1 + \frac{p+1}{p^2-p-1}}$.
     One can verify that $\lrp{1 + \frac{p+1}{p^2-p-1}} < p^{1/64}$ for all primes $p \geq 23$. Now let $c_p = 1$ for primes $p \geq 23$, and $c_p = \lrp{1 + \frac{p+1}{p^2-p-1}} / p^{1/64}$ for $2 \leq p \leq 19$.
    Then 
    \begin{align}
        \pi_1(N) &= \prod_{p \mid N} \lrp{1 + \frac{p+1}{p^2-p-1}} \\
        &\leq \prod_{p \mid N} c_p \cdot p^{1/64} \\
        &\leq \prod_{p^r \parallel N} c_p \cdot p^{r/64} \\
        &\leq c_2 \cdots c_{19} \cdot N^{1/64} \\
        &\leq 12.033 \cdot N^{1/64}.
    \end{align}
    
    Similarly, one can verify that $2 < p^{13/64}$ for all primes $p \geq 31$. Now let $c_p' = 1$ for primes $p \geq 31$, and $c_p' = 2 / p^{13/64}$ for $2 \leq p \leq 29$.
    Then 
    \begin{align}
        2^{\omega(N)} &= \prod_{p \mid N} 2 \\
        &\leq c'_2 \cdots c'_{29} \cdot N^{13/64} \\
        &\leq 10.411 \cdot N^{13/64}.
    \end{align}
    
    Now, recall from Lemma \ref{lem:sigma-multiplicative} that $\pi_2(N) = \prod_{p \mid N} \lrp{1+ \frac{1}{p-1}}$. Observe that
    \begin{align}
        \frac{\pi_1(N)}{\pi_2(N)^2} & = \prod_{p \mid N} \frac{p^2}{p^2 - p - 1} \cdot \lrp{\frac{p-1}{p}}^2  = \prod_{p \mid N} \frac{p^2 - 2p + 1}{p^2 - p - 1} \leq 1.
    \end{align}
    Thus by Lemma \ref{lem:psi-new},
    \begin{align}
        \theta_1(N) & = \frac{\sqrt{N}}{\psi^\nw(N)\pi_2(N)^2}  \leq \sqrt{N} \cdot \frac{\pi_1(N)}{N} \cdot \frac{1}{\pi_2(N)^2}  \leq \frac{1}{\sqrt{N}}.
    \end{align}
Next, 
    \begin{align}
        \theta_4(N) & = \frac{1}{\psi^\nw(N)}  \leq \frac{\pi_1(N)}{N}  \leq \frac{12.033N^{1/64}}{N}  = \frac{12.033}{N^{63/64}}.
    \end{align}
This now allows us to bound $\theta_2(N)$;
    \begin{align}
        \theta_2(N) & = \frac{4^{\omega(N)}}{\psi^\nw(N)}  \leq \lrp{10.411 \cdot N^{13/64}}^2 \cdot \frac{12.033}{N^{63/64}}  \leq \frac{1304.3}{N^{37/64}}.
    \end{align}
And finally we can bound $\theta_3(N)$;
    \begin{align}
        \theta_3(N) & = \frac{2^{\omega(N)}}{\psi^\nw(N)}  \leq 10.411 \cdot N^{13/64} \cdot \frac{12.033}{N^{63/64}}  \leq \frac{125.28}{N^{25/32}},
    \end{align}
completing the proof.
\end{proof}

We now prove the first part of Theorem \ref{thm:m=2-4-Theorem}, computing the complete list of pairs $(N,k)$ for which $a^\nw_2(2,N,k)$ vanishes.

\begin{proof}
    By \eqref{eqn:a2-new} and \eqref{eqn:A1m-new-formula}, we have
\begin{align}
    a_2^\nw(2,N,k) &= \frac{1}{2} \lrb{ (\Tr T_2^\nw)^2 - \Tr T_4^\nw - 2^{k-1} \Tr T_1^\nw } \\
    &= \frac{1}{2} \Big{[} (\Tr T_2^\nw)^2 - A_1^\nw(4,N,k) - \left(\Tr T_4^\nw - A_1^\nw(4,N,k)\right) \\
    & \qquad - 2^{k-1} A_1^\nw(1,N,k) -2^{k-1}\left(\Tr T_1^\nw - A_1^\nw (1,N,k)\right) \Big{]} \\
    &= \psi^\nw(N) 2^{k-1} \left[ -\frac{A_1^\nw(4,N,k)}{\psi^\nw(N) 2^k} - \frac{1}{2} \frac{A_1^\nw(1,N,k)}{\psi^\nw(N)} \right. \\
    & \qquad \left. + \frac{(\Tr T_2^\nw)^2}{\psi^\nw(N) 2^k} - \frac{\Tr T_4^\nw - A_1^\nw(4,N,k)}{\psi^\nw(N) 2^k} - \frac{1}{2} \frac{\Tr T_1^\nw - A_1^\nw(1,N,k)}{\psi^\nw(N)} \right] \\
    & = \psi^\nw(N) 2^{k-1} \left[-\frac{k-1}{12} \cdot \frac{4^{k/2-1}}{2^k} - \frac{k-1}{24} \cdot 1^{k/2-1}\right. \\
    & \qquad \left. + \frac{(\Tr T_2^\nw)^2}{\psi^\nw(N) 2^k} - \frac{\Tr T_4^\nw - A_1^\nw(4,N,k)}{\psi^\nw(N) 2^k} - \frac{1}{2} \frac{\Tr T_1^\nw - A_1^\nw(1,N,k)}{\psi^\nw(N)} \right] \\
    & = \psi^\nw(N) 2^{k-1} \lrb{ -\frac{k-1}{16} + E(N,k)},
\end{align}
where $E(N,k)$ denotes the three error terms. By Lemmas \ref{lem:T2-square-bound}, \ref{lem:T4-A14-bound}, and \ref{lem:T1-A11-bound},
\begin{align}
    |E(N,k)| &= \left| \frac{(\Tr T_2^\nw)^2}{\psi^\nw(N) 2^k} - \frac{\Tr T_4^\nw - A_1(4,N,k)^\nw}{\psi^\nw(N) 2^k} - \frac{1}{2} \frac{\Tr T_1^\nw - A_1(1,N,k)^\nw}{\psi^\nw(N)} \right| \\
    & \leq 32\,\theta_2(N) + 16\sqrt{2}\,\theta_3(N) + 4\,\theta_4(N) + \frac{1}{4}\theta_1(N) + \frac{41}{2}\theta_3(N) + \frac{19}{4}\theta_4(N) \\
    & \qquad + \frac{1}{2}\lrp{\frac{1}{2}\theta_1(N) + \frac{7}{3}\theta_3(N) + \theta_4(N)} \\
    & = \frac{1}{2}\theta_1(N) + 32\, \theta_2(N) + \lrp{ 16\sqrt{2}+\frac{65}{3} }\theta_3(N) + \frac{37}{4}\theta_4(N).
\end{align}

Then by the explicit $\theta_i(N)$ bounds given in Lemma \ref{lem:computable-theta-bounds},
\begin{align}
    |E(N, k)| \leq \frac{1}{2\sqrt{N}} + \frac{32 \cdot 1304.3}{N^{37/64}} + \lrp{ 16\sqrt{2}+\frac{65}{3} } \cdot \frac{125.28}{N^{25/32}} + \frac{37 \cdot 12.033}{4N^{63/64}},
\end{align}
which is clearly monotonically decreasing. We then observe that when $N = 1.19130 \cdot 10^{10}$, $|E(N, k)| \leq 0.0624997 < \frac{1}{16}$. Thus, for all $N \geq 1.19130 \cdot 10^{10}$ and $k \geq 2$ even, $a_2^\nw(2, N, k) < 0$. We then compute all $N < 1.19130 \cdot 10^{10}$ in Sage \cite{sagemath}, obtaining that $a_2^\nw(2,N,k) = 0$ for thirty-eight different pairs $(N,k)$  \cite[Table A]{ross-code}. Comparing with \cite[Tables 6.2, 6.3]{ross}, thirty-six of these pairs have $\dim S_k(\Gamma_0(N)) < 2$. The two remaining pairs for which $a_2^\nw(2,N,k)$
nontrivially vanishes are $(37,2)$ and  $(57,2)$, proving the desired result.
\end{proof}

We also note from \cite[Table A]{ross-code} that $a_2^\nw(2,N,k) > 0$ for exactly five pairs $(N,k)$: $a_2^\nw(2,3,16)=16848$, $a_2^\nw(2,3,18)=78264$, $a_2^\nw(2,15,4)=3$, $a_2^\nw(2,15,10)=7$, and $a_2^\nw(2,55,2)=1$.


\subsection{The Nonvanishing of \texorpdfstring{$a_2^{\nw} (4,N,k)$}{a2\^new(4,N,k)}}

From Lemma \ref{lem:a2-coeff-formula},
\begin{equation} \label{eqn:a2-new-m4}
    a_2^\nw(4,N,k) = \frac{1}{2} \lrb{ (\Tr T_4^\nw)^2 - \Tr T_{16}^\nw - 2^{k-1} \Tr T_4^\nw - 4^{k-1} \Tr T_1^\nw }.
\end{equation}
In Lemmas \ref{lem:T4-A14-bound} and \ref{lem:T1-A11-bound}, we estimated $\Tr T_4^\nw$ and $\Tr T_1^\nw$, respectively. We now estimate $\Tr{T^\nw_{16}}$.

\begin{lemma} \label{lem:T16-A116-bound}
    We have the following bound:
    \begin{equation}
        \lrabs{\frac{\Tr{T^\nw_{16}-A^\nw_1(16,N,k)}}{4^k\psi^\nw(N)}} \leq \frac{1}{8}\theta_1(N) + 94 \, \theta_3(N) + \frac{463}{16}\theta_4(N).
    \end{equation}
\end{lemma}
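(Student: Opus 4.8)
The plan is to reprise the template of Lemmas \ref{lem:T4-A14-bound} and \ref{lem:T1-A11-bound} verbatim, now with $m=16$. Applying the new Eichler--Selberg trace formula \eqref{eqn:new-eichler-selberg-trace-formula} gives $\Tr T^\nw_{16} - A^\nw_1(16,N,k) = -A^\nw_2(16,N,k) - A^\nw_3(16,N,k) + A^\nw_4(16,N,k)$, so it suffices to bound the three remaining terms and divide by $4^k\psi^\nw(N)$, using $k\ge 2$ throughout (so that $2^{k-1}/4^k \le \tfrac18$ and $1/4^k \le \tfrac1{16}$).

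First I would bound $A^\nw_2(16,N,k)$ through the explicit estimate \eqref{eqn:A2new-explicit-bound}, which reads
\[
|A^\nw_2(16,N,k)| \le 16^{(k-1)/2}\,2^{\omega(N)}\sum_{t^2<64}2^{\omega(64-t^2)}\sum_n h_w\!\lrp{\tfrac{t^2-64}{n^2}}\psi(n).
\]
This is a finite bookkeeping task: for each $t\in\{0,\pm1,\dots,\pm7\}$ I would list the admissible $n$ (those with $n^2\mid 64-t^2$ and $(t^2-64)/n^2\equiv 0,1\pmod 4$) and read off $h_w$ and $\psi(n)$; the relevant discriminants run down to $-64$, all of which appear in Table \ref{table:weighted-class-numbers}. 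Carrying this out should give $\sum_{t^2<64}2^{\omega(64-t^2)}\sum_n h_w(\tfrac{t^2-64}{n^2})\psi(n)=376$, so that $|A^\nw_2(16,N,k)|\le \tfrac14\cdot 376\cdot 4^k\cdot 2^{\omega(N)} = 94\cdot 4^k\cdot 2^{\omega(N)}$, contributing $94\,\theta_3(N)$.

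Next I would bound $A^\nw_3(16,N,k)$ via \eqref{eqn:A3m-formula}, Lemma \ref{lem:sigma-multiplicative}, and the convolution formalism. Grouping the divisor pairs $(d,16/d)$ of $16$ (with $\min(d,16/d)^{k-1}$ equal to $1,2^{k-1},4^{k-1},2^{k-1},1$ for $d=1,2,4,8,16$, and $\Sigma^\nw_{16,d}$ depending only on $h=|d-16/d|$, which is $15,6,0$ for $d=1,2,4$), one gets
\[
A^\nw_3(16,N,k) = \Sigma^\nw_{16,1}(N) + 2^{k-1}\,\Sigma^\nw_{16,2}(N) + \tfrac12\,4^{k-1}\,\Sigma^\nw_{16,4}(N).
\]
Lemma \ref{lem:sigma-multiplicative} then gives $|\Sigma^\nw_{16,1}|\le 15\cdot 4^{\omega(15)}=240$, $|\Sigma^\nw_{16,2}|\le 6\cdot 4^{\omega(6)}=96$, and $|\Sigma^\nw_{16,4}|\le \sqrt N/\pi_2(N)^2$; dividing by $4^k\psi^\nw(N)$ and using $k\ge 2$ turns these into $\tfrac18\theta_1(N)+15\,\theta_4(N)+12\,\theta_4(N)=\tfrac18\theta_1(N)+27\,\theta_4(N)$. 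Finally $|A^\nw_4(16,N,k)|\le \sigma_1(16)=31$ by \eqref{eqn:A4-big-O}, contributing $\tfrac{31}{16}\theta_4(N)$. Adding the three pieces yields $\tfrac18\theta_1(N)+94\,\theta_3(N)+\bigl(27+\tfrac{31}{16}\bigr)\theta_4(N)=\tfrac18\theta_1(N)+94\,\theta_3(N)+\tfrac{463}{16}\theta_4(N)$, as claimed.

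The main obstacle is purely the enumeration in the $A^\nw_2$ step: one must not miss any admissible $n$ — in particular the $n=4$ contributions at $t=0$ and $t=\pm4$, where $-64/16=-4$ and $-48/16=-3$ are still $\equiv 0,1\pmod 4$ — since an error in any single discriminant changes the constant $94$. Everything else is a mechanical repetition of the $m=4$ and $m=1$ computations already carried out in Lemmas \ref{lem:T4-A14-bound} and \ref{lem:T1-A11-bound}.
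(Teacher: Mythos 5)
Your proposal is correct and follows the paper's proof essentially verbatim: you decompose $\Tr T^\nw_{16}-A^\nw_1(16,N,k)$ into $-A^\nw_2-A^\nw_3+A^\nw_4$, bound $A^\nw_2$ via \eqref{eqn:A2new-explicit-bound} with the same constant $376 = 4\cdot 94$, bound $A^\nw_3$ term-by-term via Lemma~\ref{lem:sigma-multiplicative} exactly as the paper does (the paper groups the $240 + 48\cdot 2^k$ pieces slightly differently before dividing, but arrives at the same $15\theta_4 + 12\theta_4$), bound $A^\nw_4$ by $\sigma_1(16)=31$, and divide using $k\ge 2$ to land on $\tfrac18\theta_1 + 94\theta_3 + \tfrac{463}{16}\theta_4$. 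Your caution about the $n=4$ terms at $t=0,\pm4$ in the $A^\nw_2$ enumeration is well placed and the resulting constant is correct.
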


\begin{proof}
    We start by computing a bound for $A^\nw_2(16,N,k)$. By \eqref{eqn:A2new-explicit-bound},
    \begin{align}
        \lrabs{A^\nw_2(16,N,k)} 
        &\leq 16^{(k-1)/2}2^{\omega(N)} \cdot \sum_{t^2 < 64} 2^{\omega(64 - t^2)} \sum_n h_w \left( \frac{t^2 - 64}{n^2} \right) \psi(n) \\
        &= 376 \cdot 16^{(k-1)/2} \cdot 2^{\omega(N)} \\
        & = 94 \cdot 4^k \cdot 2^{\omega(N)}
    \end{align}
    
For $A_3^{\nw}$ we have by \eqref{eqn:A3m-formula} and Lemma \ref{lem:sigma-multiplicative},
\begin{align}
    \lrabs{A^\nw_3(16,N,k)} &= \lrabs{\frac{1}{2}\sum_{d\mid16}\min(d,16/d)^{k-1}\Sigma^\nw_{16,d}(N)} \\ 
    &= \lrabs{\Sigma^\nw_{16,1}(N) + 2^{k-1}\Sigma^\nw_{16,2}(N) + \frac12 4^{k-1}\Sigma^\nw_{16,4}(N)} \\
    & \leq 15\cdot 4^{\omega(15)} + 2^{k-1} \cdot 6 \cdot 4^{\omega(6)} + \frac{1}{8}\cdot 4^k\frac{\sqrt{N}}{\pi_2(N)^2} \\
    & = 240 + 48\cdot 2^k + \frac{1}{8}\cdot 4^{k}\frac{\sqrt{N}}{\pi_2(N)^2}.
\end{align}

And by \eqref{eqn:A4-big-O}, $\lrabs{A^\nw_4(16,N,k)} \leq \sigma_1(16) = 31$. Combining these bounds and using $k \geq 2$,

\begin{align}
    \lrabs{\frac{\Tr{T^\nw_{16}-A^\nw_1(16,N,k)}}{4^k\psi^\nw(N)}} &= \lrabs{\frac{-A^\nw_2-A^\nw_3+A^\nw_4}{4^k\psi^\nw(N)}} \\
    &\leq \frac{\lrabs{A^\nw_2}+ \lrabs{A^\nw_3} + \lrabs{A^\nw_4}}{4^k\psi^\nw(N)} \\
    &\leq \frac{94\cdot4^k\cdot2^{\omega(N)}}{4^k\psi^\nw(N)} + \frac{240+48\cdot2^k+\frac{1}{8}4^k\frac{\sqrt{N}}{\pi_2(N)^2}}{4^k\psi^\nw(N)} + \frac{31}{4^k\psi^\nw(N)} \\
    &\leq \frac{1}{8}\theta_1(N) + 94\,\theta_3(N) + \frac{463}{16}\theta_4(N),
\end{align}

as desired.
\end{proof}

We now have the tools to prove the second part of Theorem \ref{thm:m=2-4-Theorem}.

\begin{proof}

From \eqref{eqn:a2-new-m4},
\begin{align} \label{eqn:a4-new-formula}
    a_2^\nw(4,N,k) & = \frac{1}{2} \lrb{ (\Tr T_4^\nw)^2 - \Tr T_{16}^\nw - 2^{k-1} \Tr T_4^\nw - 4^{k-1} \Tr T_1^\nw }.
\end{align}
We first compute $(\Tr T_4^\nw)^2$, which contains the main term. For ease of notation, for each $m\ge1$ denote  $E_m(N, k) := \Tr T_m^\nw - A_1^\nw(m, N, k)$. Then
\begin{align}
    (\Tr T_4^\nw)^2 & = (A_1^\nw(4, N, k) + E_4(N,k))^2 \\
    & = \psi^\nw(N)^2 4^k \lrb{ \lrp{\frac{A_1^\nw(4, N, k)}{2^k\psi^\nw(N)}}^2 + 2\cdot\frac{A_1^\nw(4, N, k)}{2^k\psi^\nw(N)}\frac{E_4(N,k)}{2^k\psi^\nw(N)} + \lrp{\frac{E_4(N, k)}{2^k\psi^\nw(N)}}^2 } \\
    & = \frac{k-1}{12}\psi^\nw(N)^2 4^k \lrb{\frac{k-1}{192} + \frac{1}{2}\cdot \frac{E_4(N,k)}{2^k\psi^\nw(N)} + \frac{12}{k - 1}\lrp{\frac{E_4(N, k)}{2^k\psi^\nw(N)}}^2} \\
    & = \frac{k-1}{12}\psi^\nw(N)^2 4^k \lrb{\frac{k-1}{192} + E(N,k)},
\end{align}
Where $E(N,k)$ denotes the error terms. By Lemma \ref{lem:T4-A14-bound} and since $k\geq2$,
\begin{align} \label{eqn:quadratic-error-bound}
    |E(N,k)| & \leq \frac{1}{8}\theta_1(N) + \frac{41}{4}\theta_3(N) + \frac{19}{8}\theta_4(N) + 12\lrp{\frac{1}{4}\theta_1(N)+\frac{41}{2}\theta_3(N)+\frac{19}{4}\theta_4(N)}^2.
\end{align}
We also let
\begin{align}
    E'(N,k) :& = \frac{12}{(k-1)\psi^\nw(N)^2 4^k} \lrb{-\Tr T_{16}^\nw - \Tr T_4^\nw 2^{k-1} - \Tr T_1^\nw 4^{k-1}} \\
    & = \lrb{- \frac{12}{k-1} \cdot \frac{\Tr T_{16}^\nw}{\psi^\nw(N)^2 4^k} - \frac{6}{k-1} \cdot \frac{\Tr T_4^\nw}{\psi^\nw(N)^2 2^k} - \frac{3}{k-1} \cdot \frac{\Tr T_1^\nw}{\psi^\nw(N)^2}}, \label{eqn:Eprime-temp}
\end{align}
so that
\begin{align}
    a_2^\nw(4, N, k) = \frac{k-1}{12}\psi^\nw(N)^2 4^k \lrb{\frac{k-1}{192} + E(N,k) + E'(N,k)}.
\end{align}

Then rewriting each of the $\Tr T_m^\nw$ terms appearing in \eqref{eqn:Eprime-temp} as $A_1^\nw(m,N,k) + E_m(N,k)$, we have by \eqref{eqn:A1m-new-formula} and Lemmas \ref{lem:T4-A14-bound}, \ref{lem:T1-A11-bound}, and \ref{lem:T16-A116-bound},
\begin{align}
    |E'(N,k)| & \leq 12 \cdot \frac{16^{k/2-1}}{\psi^\nw(N)4^k} + 12\, \theta_4(N)|E_{16}(N,k)| + 6 \cdot \frac{4^{k/2-1}}{2\psi^\nw(N)2^k} + 6\,\theta_4(N)|E_4(N,k)| \\
    & \qquad + 3 \cdot \frac{1^{k/2-1}}{4\psi^\nw(N)} +  
    3\, \theta_4(N)|E_{1}(N,k)| \\
    & \leq \frac{9}{4}\theta_4(N) + 12\,\theta_4(N)\lrp{\frac{1}{8}\theta_1(N)+94\,\theta_3(N)+\frac{463}{16}\theta_4(N)} \\
    & \qquad + 6\,\theta_4(N)\lrp{\frac{1}{4}\theta_1(N)+\frac{41}{2}\theta_3(N)+\frac{19}{4}\theta_4(N)} \\
    & \qquad + 3\,\theta_4(N)\lrp{\frac{1}{2}\theta_1(N)+ \frac{7}{3}\theta_3(N)+\theta_4(N)} \\
    & = \theta_4(N) \lrp{\frac{9}{4} + \frac{9}{2}\theta_1(N) + 1258 \,\theta_3(N) + \frac{1515}{4}\theta_4(N)}. \label{eqn:linear-error-bound}
\end{align}
Combining \eqref{eqn:quadratic-error-bound} and \eqref{eqn:linear-error-bound},
\begin{align}
    |E(N,k) + E'(N,k)| & \leq \frac{1}{8}\theta_1(N) + \frac{41}{4}\theta_3(N) + \frac{37}{8}\theta_4(N) + 12\lrp{\frac{1}{4}\theta_1(N)+\frac{41}{2}\theta_3(N)+\frac{19}{4}\theta_4(N)}^2 \\
    & \qquad\qquad\qquad\qquad
    + \theta_4(N) \lrp{\frac{9}{2}\theta_1(N) + 1258\,\theta_3(N) + \frac{1515}{4}\theta_4(N)}.
\end{align}
Rearranging and using the explicit $\theta_i(N)$ bounds given in Lemma \ref{lem:computable-theta-bounds},
    \begin{align}
        |E(N,k)+ E'(N,k)| &\leq \frac{1}{8}\lrp{\frac{1}{\sqrt{N}}}+\frac{41}{4}\lrp{\frac{125.28}{N^{25/32}}}+\frac{37}{8}\lrp{\frac{12.033}{N^{63/64}}} \\ & \qquad +33\lrp{\frac{12.033}{N^{95/64}}} + \frac{3}{4}\lrp{\frac{1}{N}} + 5043\lrp{\frac{125.28^2}{N^{25/16}}} \\
        &\qquad + \frac{1299}{2}\lrp{\frac{12.033^2}{N^{63/32}}} + 123\lrp{\frac{125.28}{N^{41/32}}} + 3595\lrp{\frac{12.033 \cdot 125.28}{N^{113/64}}},
    \end{align}
which is clearly monotonically decreasing. Observe that when $N=10,\!284,\!270$, we have $|E(N,k)| \leq 0.00520829 < \frac{1}{192}$. Thus, for all $N \geq 10,\!284,\!270$ and $k \geq 2$ even, $a_2^\nw(4,N,k) > 0$. We then compute all $N < 10,\!284,\!270$ in Sage \cite{sagemath}, obtaining that $a_2^\nw(2,N,k) = 0$ for forty different pairs $(N,k)$  \cite[Table B]{ross-code}. Comparing with \cite[Tables 6.2, 6.3]{ross}, thirty-six of these pairs have $\dim S_k(\Gamma_0(N)) < 2$. The four remaining pairs for which $a_2^\nw(2,N,k)$
nontrivially vanishes are $(43,2)$, $(57,2)$, $(75,2)$, and $(205,2)$, proving the desired result.
\end{proof}

We also note from \cite[Table B]{ross-code} that $a_2^\nw(4,N,k) < 0$ for exactly $135$ pairs $(N,k)$. The minimum value achieved is $a_2^\nw(4,1,134) \approx -6.119 \times 10^{79}$.

\section{Extending to the General Character Case} \label{sec:character-case}

Now, we would like to extend our results to the case of general character.
But from \cite[Proposition 6.1]{ross}, $\dim S_k^\nw(\Gamma_0(N),\chi) = 0$ for the infinite family of triples $(N,k,\chi)$ where $2 \mid \condf(\chi)$ and $2 \parallel N/\condf(\chi)$.
This means that $a_2^\nw(m,N,k,\chi)$ trivially vanishes for infinitely many $(N,k,\chi)$.
However, if we only consider nontrivial vanishing of $a_2^\nw(m,N,k,\chi)$, then we are able to extend our result. In particular, for any given $m$, consider $N \geq 1$ coprime to $m$, $k \geq 2$, and $\chi$ a Dirichlet character modulo $N$ such that $\chi(-1) = (-1)^k$. Then we show that $a_2^\nw(m,N,k,\chi)$ nontrivially vanishes for only finitely many triples $(N,k,\chi)$, proving Theorem \ref{thm:main-theorem} for general character.


Recall the Eichler-Selberg trace formula from Lemma \ref{lemma:eichler-selberg-trace-formula},
\begin{equation}
\Tr T_m(N,k,\chi) = A_1(m,N,k,\chi) - A_2(m,N,k,\chi) - A_3(m,N,k,\chi) + A_4(m,N,k,\chi).
\end{equation}
Additionally, recall from Lemma \ref{lemma:beta} that the trace of $T_m^{\nw}(N,k,\chi)$ is given by
\begin{equation} \label{eqn:new-trace-formula-char}
    \Tr T_m^{\nw}(N,k,\chi) = \sum_{\mathfrak{f}(\chi) \mid M \mid N} \beta\lrp{\frac{N}{M}} \cdot \Tr T_m(M,k,\chi).
\end{equation}
The main difference between the general character case and the trivial character case is that this summation no longer takes the form of a Dirichlet convolution. This means that in particular, we can no longer easily write $\Tr T_m^{\nw}(N,k,\chi)$ as a linear combination of convolutions of the form $\beta*f$. However, we can still bound each of the terms in \eqref{eqn:new-trace-formula-char} separately. This rougher bound will suffice for our purposes. 

Just like in the trivial character case, we define
\begin{equation}  \label{eqn:Ai-new-char-def}
    A_i^\nw(m,N,k,\chi) := \sum_{\condf(\chi) \mid M \mid N} \beta\lrp{\frac{N}{M}} \cdot A_i(m,M,k,\chi).
\end{equation}
For a positive integer $g$, we also define 
\begin{equation}
    \psi_{g}^\nw(N) :=  \sum_{g \mid M \mid N} \beta\lrp{\frac{N}{M}} \cdot \psi(M),
\end{equation}
so that 
\begin{equation} \label{eqn:A1m-new-formula-character}
    A_1^\nw(m,N,k,\chi) = \chi(\sqrt{m})\frac{k-1}{12} m^{k/2-1} \psi_{\condf(\chi)}^\nw(N).
\end{equation}
Then in a manner similar to Lemmas \ref{lem:m-not-square} and \ref{lem:m-perfect-square}, we can determine the asymptotic behavior of $\Tr T_m^\nw(N,k,\chi)$.

\begin{lemma} \label{lem:char-tr-asymp-m-not-square}
    Let $m$ fixed not be a perfect square, and consider $N \geq 1$ coprime to $m$, $k \geq 2$, and $\chi$ a Dirichlet character modulo $N$ such that $\chi(-1) = (-1)^k$. Then
    \begin{align}
        \Tr T_m^\nw(N,k,\chi) = O\lrp{m^{k/2} 4^{\omega(N)} \sigma_0(N)}.
    \end{align}
\end{lemma}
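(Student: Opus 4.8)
The plan is to mimic the structure of the proof of Lemma \ref{lem:m-not-square}, but to bound each of the $A_i^\nw(m,N,k,\chi)$ directly from the finite sum \eqref{eqn:Ai-new-char-def} rather than through a Dirichlet convolution. First I would note that since $m$ is not a perfect square, $\chi(\sqrt m)=0$, so $A_1^\nw(m,N,k,\chi)=0$ by \eqref{eqn:A1m-new-formula-character}. It then remains to bound $A_2^\nw$, $A_3^\nw$, and $A_4^\nw$. The key trivial observation is that $|\beta(N/M)| \le 2$ for all divisors $M$, and that the number of divisors $M$ with $\condf(\chi) \mid M \mid N$ is at most $\sigma_0(N)$; hence
\begin{equation}
    |A_i^\nw(m,N,k,\chi)| \le 2\sigma_0(N) \cdot \max_{M \mid N} |A_i(m,M,k,\chi)|,
\end{equation}
so it suffices to bound each $|A_i(m,M,k,\chi)|$ uniformly in $M \mid N$ with the appropriate dependence on $\omega(M) \le \omega(N)$.

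Next I would bound the individual $A_i(m,M,k,\chi)$. For $A_2$, using $|U_{k-1}(t,m)| \le 2m^{(k-1)/2}/\sqrt{4m-t^2}$ as in \eqref{eqn:U-bounds}, and $|\mu_{t,n,m}(M)| \le \psi(n)\,2^{\omega(4m-t^2)}\sqrt{4m-t^2}\cdot (\text{something like } \sigma_0(M))$ — more precisely, the trivial bound $|\mu_{t,n,m}(M)| \le \psi(M)$ combined with the local analysis in the proof of Lemma \ref{lem:mu-multiplicative} gives $|\mu_{t,n,m}(M)| = O(2^{\omega(M)}\psi(n)\,2^{\omega(4m-t^2)}\sqrt{4m-t^2})$ since $\mu_{t,n,m}$ is multiplicative with each local factor $O(1)$ in $M$ — one obtains $|A_2(m,M,k,\chi)| = O(m^{k/2}2^{\omega(M)})$, hence $|A_2^\nw(m,N,k,\chi)| = O(m^{k/2}2^{\omega(N)}\sigma_0(N))$. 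For $A_3$, the inner sum over $\tau \mid M$ is over at most $\sigma_0(M)$ terms each contributing $\phi(\gcd(\tau,M/\tau))\cdot|\chi(y_\tau)| \le \phi(\gcd(\tau,M/\tau)) \le \sqrt{M}$ (or, following the $h \ne 0$ analysis in Lemma \ref{lem:sigma-multiplicative}, one gets an $O(1)$-in-$N$ bound when $m$ is not a square, but the crude bound $O(m^{k/2}\sqrt N \sigma_0(N))$ also suffices and is subsumed); so $|A_3^\nw| = O(m^{k/2}\,\sigma_0(N)^2\sqrt N) = O(m^{k/2}4^{\omega(N)}\sigma_0(N))$ after absorbing $\sqrt N \sigma_0(N) = O(\text{poly})$ — actually more care is needed here, and I'd rather keep $A_3$ bounded by $O(m^{k/2}2^{\omega(M)})$ via the multiplicativity already established and Lemma \ref{lem:sigma-multiplicative}, giving $|A_3^\nw| = O(m^{k/2}2^{\omega(N)}\sigma_0(N))$. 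For $A_4$, we have $|A_4(m,M,k,\chi)| \le \sigma_1(m) = O(1)$, so $|A_4^\nw| = O(\sigma_0(N))$. Summing, $\Tr T_m^\nw(N,k,\chi) = O(m^{k/2}2^{\omega(N)}\sigma_0(N)) = O(m^{k/2}4^{\omega(N)}\sigma_0(N))$, using $2^{\omega(N)} \le 4^{\omega(N)}$.

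The main obstacle I anticipate is handling $A_3(m,M,k,\chi)$ cleanly: unlike the trivial character case, the presence of $\chi(y_\tau)$ and the condition $\gcd(\tau,N/\tau) \mid \gcd(N/\condf(\chi), d-m/d)$ complicates a direct multiplicative argument, so I would likely just use the crude triangle-inequality bound $|A_3(m,M,k,\chi)| \le \tfrac12 \sum_{d\mid m}\min(d,m/d)^{k-1}\sum_{\tau\mid M}\phi(\gcd(\tau,M/\tau)) \le \tfrac12\sigma_1(m) m^{(k-1)/2}\sigma_0(M)\sqrt M = O(m^{k/2}\sqrt M\,\sigma_0(M))$, and then check that $\sqrt M\,\sigma_0(M)\cdot\sigma_0(N) = O(4^{\omega(N)}\sigma_0(N))$ fails — it does not, since $\sqrt M$ grows. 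So the cleaner route really is the multiplicative one: since $\Sigma_{m,d}$ is multiplicative and (by the computations in Lemma \ref{lem:sigma-multiplicative}) $\Sigma_{m,d}(M) = O(\sqrt M\cdot 2^{\omega(M)})$ when $h=0$ and $O(2^{\omega(M)})$ when $h\ne0$ — wait, when $h=0$ the main term is already in $A_1$-order, but $m$ not a square forces $h\ne0$ for every $d\mid m$, so $\Sigma_{m,d}(M)=O(2^{\omega(M)})$ and hence $A_3(m,M,k,\chi)=O(m^{k/2}2^{\omega(M)})$, giving $A_3^\nw=O(m^{k/2}2^{\omega(N)}\sigma_0(N))$ as needed. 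Collecting everything yields $\Tr T_m^\nw(N,k,\chi)=O(m^{k/2}2^{\omega(N)}\sigma_0(N))=O(m^{k/2}4^{\omega(N)}\sigma_0(N))$, completing the proof.
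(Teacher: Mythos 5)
Your overall strategy matches the paper's: bound each $A_i(m,M,k,\chi)$ uniformly, count the terms in the sum \eqref{eqn:Ai-new-char-def} by $\sigma_0(N)$, and bound $\beta$ termwise — rather than attempting a Dirichlet convolution. However, there is a concrete arithmetic error at the crux of your ``key trivial observation'': the claim $\lrabs{\beta(N/M)} \le 2$ is false. The function $\beta$ is multiplicative with $\beta(p)=-2$, so for example $\beta(6)=4$ and $\beta(30)=-8$; the correct bound is $\lrabs{\beta(N/M)} \le 2^{\omega(N/M)} \le 2^{\omega(N)}$, which is what the paper uses (via Lemma \ref{lemma:beta}). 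This extra factor of $2^{\omega(N)}$ is exactly what turns your $O(m^{k/2}\,2^{\omega(N)}\sigma_0(N))$ into the lemma's $O(m^{k/2}\,4^{\omega(N)}\sigma_0(N))$: once you have $A_2(m,M,k,\chi) = O(m^{k/2}2^{\omega(M)})$, you get
\begin{equation}
\lrabs{A_2^\nw(m,N,k,\chi)} \le \sum_{\condf(\chi)\mid M\mid N}\lrabs{\beta(N/M)}\cdot\lrabs{A_2(m,M,k,\chi)} = O\lrp{\sigma_0(N)\cdot 2^{\omega(N)}\cdot m^{k/2}2^{\omega(N)}},
\end{equation}
and similarly for $A_3^\nw$ and $A_4^\nw$. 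Your bound as stated ($2^{\omega(N)}$ instead of $4^{\omega(N)}$) is therefore not justified by your argument, even though it is formally implied by the lemma. The rest of the proposal — $A_1^\nw=0$ since $\chi(\sqrt m)=0$; bounding $\mu_{t,n,m}(M)$ via Huxley's count and multiplicativity; using the fact that $m$ not a perfect square forces $h = d - m/d \ne 0$ for every $d\mid m$, so the inner $A_3$ sum is $O(2^{\omega(M)})$ after replacing $\chi(y_\tau)$ by $1$ and dropping the $\condf(\chi)$ condition via the triangle inequality; and $\lrabs{A_4}\le\sigma_1(m)$ — is sound and mirrors the paper's reasoning.
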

\begin{proof}
    First, $A_1(m,N,k,\chi) = 0$ since $m$ is not a perfect square. 
    
    Second, we show that $A_2(m,N,k,\chi) = O(m^{k/2} 2^{\omega(N)})$. We have from  Huxley \cite[Page 194]{huxley} that the equation $x^2 - tx + m \equiv 0 \mod N$ has at most $2^{\omega(N)} \sqrt{\lrabs{t^2-4m}}$ solutions. Thus in the manner of \eqref{eqn:mu-bound-temp},
    \begin{align}
        \lrabs{\mu_{t,n,m}(N)} = \lrabs{\frac{\psi(N)}{\psi (N/\gcd(N,n))} \sump_{c \!\! \mod N} \chi(c)} \leq \psi(n) \cdot 2^{\omega(N)} \sqrt{\lrabs{t^2-4m}} = O(2^{\omega(N)}).
    \end{align}
    Here $t,n$ come from the the fixed value of $m$, and hence are constants with respect to the big-$O$ notation.
    Also, by \eqref{eqn:U-bounds}, $U_{k-1}(t,m) = O(m^{k/2})$. Thus
    \begin{align} 
        A_{2}(m,N,k,\chi) &=  \frac{1}{2} \sum_{t^2 < 4m} \sum_{n} U_{k-1}(t,m) h_w \lrp{ \frac{t^2-4m}{n^2} } \mu_{t,n,m}(N) \\
        &= O(m^{k/2} 2^{\omega(N)}). \label{eqn:A2-asymp-temp}
    \end{align}

    Third, we show that $A_3(m,N,k,\chi) = O(m^{k/2} 2^{\omega(N)})$. Recall from \eqref{eqn:A3m-formula},
\begin{align}
   A_3(m,N,k,\chi) &= \frac{1}{2} \sum_{d|m} \min(d,m/d)^{k-1} \mspace{-40mu} \sum_{\substack{\tau \mid N \\ (\tau,N/\tau) \mid (N/\condf(\chi),d-m/d)}} 
    \mspace{-30mu} \phi( \gcd(\tau, N / \tau )) \chi(y_{\tau}).
\end{align}
    Now, we have from \eqref{eqn:sigma-m-d-local-formula-h0}, \eqref{eqn:sigma-m-d-local-formula-hn0-1}, and \eqref{eqn:sigma-m-d-local-formula-hn0-2} that 
    \begin{align}
        \sum_{\substack{\tau \mid N \\ (\tau,N/\tau) \mid (d-m/d)}} 
    \mspace{-30mu} \phi( \gcd(\tau, N / \tau )) \ \ &\leq \ 
    \begin{cases}
         \lrabs{d-m/d} \cdot 2^{\omega(N)}  & \text{if } d-m/d \neq 0, \\
        2^{\omega(N)} \sqrt{N}  & \text{if } d-m/d=0. 
    \end{cases} 
    \end{align}
    Thus 
    \begin{align}
        \lrabs{\sum_{\substack{\tau \mid N \\ (\tau,N/\tau) \mid (N/\condf(\chi),d-m/d)}} 
    \mspace{-30mu} \phi( \gcd(\tau, N / \tau )) \chi(y_{\tau})} 
    &\leq \sum_{\substack{\tau \mid N \\ (\tau,N/\tau) \mid (d-m/d)}} 
    \mspace{-30mu} \phi( \gcd(\tau, N / \tau ))
    \\
    &= \begin{cases}
        O\lrp{2^{\omega(N)}}  & \text{if } d-m/d \neq 0, \\
        O\lrp{2^{\omega(N)} \sqrt{N}}  & \text{if } d-m/d=0. 
    \end{cases} \label{eqn:sigma-bound-cases-temp}
    \end{align}
    Note the second of these cases cannot appear here, since $m$ is not a perfect square. Thus using the fact that $\min(d,m/d)^{k-1} \leq m^{(k-1)/2}$, we have
    \begin{align}
       A_3(m,N,k,\chi) &= \frac{1}{2} \sum_{d|m} \min(d,m/d)^{k-1} \mspace{-40mu} \sum_{\substack{\tau \mid N \\ (\tau,N/\tau) \mid (N/\condf(\chi),d-m/d)}} 
        \mspace{-30mu} \phi( \gcd(\tau, N / \tau )) \chi(y_{\tau}) \\
        &= 
            O\lrp{m^{k/2} 2^{\omega(N)}},
    \end{align}
    as desired.

    Fourth, we observe from \eqref{eqn:A4m-formula} that $\lrabs{A_4(m,N,k,\chi)} \leq \sigma_1(m) = O(1)$.

    Finally, we determine the asymptotics of the $A_i^\nw(m,N,k,\chi)$. First, $A_1^\nw(m,N,k,\chi) = 0$.
    Then for $A_2^\nw(m,N,k,\chi)$, observe that there are $\leq \sigma_0(N)$ terms in the summation \eqref{eqn:Ai-new-char-def}. And by Lemma \ref{lemma:beta}, for each 
    $\beta(N/M)$ in the summation, $\lrabs{\beta(N/M)} \leq  2^{\omega(N/M)} \leq 2^{\omega(N)}$. Thus by \eqref{eqn:A2-asymp-temp},
    \begin{align} 
        \lrabs{A_2^\nw(m,N,k,\chi)} &= \lrabs{\sum_{\condf(\chi) \mid M \mid N} \beta\lrp{\frac{N}{M}} \cdot A_2(m,M,k,\chi)}  \\
        &\leq \sum_{\condf(\chi) \mid M \mid N} \lrabs{\beta\lrp{\frac{N}{M}}} \cdot \lrabs{A_2(m,M,k,\chi)} \\
        &= O\lrp{\sigma_0(N) \cdot 2^{\omega(N)} \cdot m^{k/2} 2^{\omega(N)}}.
    \end{align}
    In a similar manner, we have
    \begin{align}
        A_3^\nw(m,N,k,\chi) = 
        O\lrp{\sigma_0(N) \cdot 2^{\omega(N)} \cdot m^{k/2} 2^{\omega(N)}},  
    \end{align}
    and 
    \begin{align}
        A_4^\nw(m,N,k,\chi) = O\lrp{\sigma_0(N) \cdot 2^{\omega(N)}}.
    \end{align}
    Combining these bounds for $A_i^\nw(m,N,k,\chi)$, we obtain
    \begin{align}
        \Tr T_m^\nw(N,k,\chi) &= A_1^\nw(m,N,k,\chi) - A_2^\nw(m,N,k,\chi) - A_3^\nw(m,N,k,\chi) + A_4^\nw(m,N,k,\chi) \\
        &= O\lrp{m^{k/2} 4^{\omega(N)} \sigma_0(N)},
    \end{align}
    verifying the desired result.
\end{proof}

\begin{lemma} \label{lem:char-tr-asymp-m-square}
    Let $m \geq 1$ fixed be a perfect square, and consider $N \geq 1$ coprime to $m$, $k \geq 2$, and $\chi$ a Dirichlet character modulo $N$ such that $\chi(-1) = (-1)^k$. Then
    \begin{align}
        \Tr T_m^\nw(N,k,\chi) = \chi(\sqrt{m}) \frac{k-1}{12} m^{k/2-1} \psi_{\condf(\chi)}^\nw(N) +  O\lrp{m^{k/2} 4^{\omega(N)} \sigma_0(N) \sqrt{N}}.
    \end{align}
\end{lemma}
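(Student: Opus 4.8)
The plan is to follow the proof of Lemma~\ref{lem:char-tr-asymp-m-not-square} almost verbatim, bounding each term $A_i^\nw(m,N,k,\chi)$ from \eqref{eqn:Ai-new-char-def} separately, the only genuine differences being that (i) the $A_1$ term is now nonzero and furnishes the main term, and (ii) the divisor $d=\sqrt m$ appears in $A_3$, contributing a term of size $\sqrt N$.

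First, by \eqref{eqn:A1m-new-formula-character} we have $A_1^\nw(m,N,k,\chi) = \chi(\sqrt m)\,\frac{k-1}{12}\,m^{k/2-1}\,\psi_{\condf(\chi)}^\nw(N)$, which is precisely the claimed main term. Second, the pointwise estimates $A_2(m,M,k,\chi) = O(m^{k/2}2^{\omega(M)})$ and $\lrabs{A_4(m,M,k,\chi)} \leq \sigma_1(m) = O(1)$ derived in the proof of Lemma~\ref{lem:char-tr-asymp-m-not-square} did not use that $m$ is non-square, so they hold here without change. Summing over $\condf(\chi)\mid M\mid N$ as in that proof, using $\lrabs{\beta(N/M)}\leq 2^{\omega(N)}$ and the bound of $\sigma_0(N)$ on the number of divisors, gives $A_2^\nw(m,N,k,\chi) = O(m^{k/2}4^{\omega(N)}\sigma_0(N))$ and $A_4^\nw(m,N,k,\chi) = O(4^{\omega(N)}\sigma_0(N))$, both of which are smaller than the asserted error term.

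Third — the one place where the square case differs — consider $A_3$. When $m$ is a perfect square, the divisor $d=\sqrt m$ satisfies $d - m/d = 0$, so the second case of \eqref{eqn:sigma-bound-cases-temp} is now realized: this divisor contributes a term of size $O\lrp{m^{(k-1)/2}\,2^{\omega(M)}\sqrt M}$, while every other $d\mid m$ contributes only $O\lrp{m^{k/2}2^{\omega(M)}}$. Hence $A_3(m,M,k,\chi) = O\lrp{m^{k/2}2^{\omega(M)}\sqrt M}$, and summing over $M\mid N$ exactly as above yields $A_3^\nw(m,N,k,\chi) = O\lrp{m^{k/2}4^{\omega(N)}\sigma_0(N)\sqrt N}$, which dominates the $A_2^\nw$ and $A_4^\nw$ errors. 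Assembling these in the trace formula $\Tr T_m^\nw(N,k,\chi) = A_1^\nw - A_2^\nw - A_3^\nw + A_4^\nw$ gives the stated formula. I expect no real obstacle here: the only substantive observation is that the $\sqrt N$-sized contribution to $A_3$ is present in the square case, and all remaining steps are the same bookkeeping with the divisor sum $\sum_{\condf(\chi)\mid M\mid N}$ as in the previous lemma.
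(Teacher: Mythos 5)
Your proposal matches the paper's proof essentially verbatim: you carry over the $A_2$ and $A_4$ bounds from Lemma~\ref{lem:char-tr-asymp-m-not-square}, isolate the $d=\sqrt{m}$ term in $A_3$ to pick up the $\sqrt{N}$ factor via the second case of \eqref{eqn:sigma-bound-cases-temp}, sum over divisors $M$ with the $2^{\omega(N)}\sigma_0(N)$ factors, and combine with the exact $A_1^\nw$ formula from \eqref{eqn:A1m-new-formula-character}. This is correct and is the same argument the paper gives.
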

\begin{proof}
    We still have
    \begin{align} 
        A_2^\nw(m,N,k,\chi) &= O\lrp{\sigma_0(N) \cdot 2^{\omega(N)} \cdot m^{k/2} 2^{\omega(N)}},
    \end{align}
    and
    \begin{align}
        A_4^\nw(m,N,k,\chi) = O\lrp{\sigma_0(N) \cdot 2^{\omega(N)}},
    \end{align}
    from Lemma \ref{lem:char-tr-asymp-m-not-square}.

    For $A_3(m,N,k,\chi)$, since $m$ is a perfect square, we must consider the second case of \eqref{eqn:sigma-bound-cases-temp}. This means that we now have
    \begin{align}
       A_3(m,N,k,\chi) &= \frac{1}{2} \sum_{d|m} \min(d,m/d)^{k-1} \mspace{-40mu} \sum_{\substack{\tau \mid N \\ (\tau,N/\tau) \mid (N/\condf(\chi),d-m/d)}} 
        \mspace{-30mu} \phi( \gcd(\tau, N / \tau )) \chi(y_{\tau}) \\
        &= 
            O\lrp{m^{k/2} 2^{\omega(N)} \sqrt{N}},
    \end{align}
    and so
    \begin{align}
        A_3^\nw(m,N,k,\chi) = 
        O\lrp{\sigma_0(N) \cdot 2^{\omega(N)} \cdot m^{k/2} 2^{\omega(N)} \sqrt{N}}. 
    \end{align}

    Combining these bounds for the $A_i^\nw(m,N,k,\chi)$ and using \eqref{eqn:A1m-new-formula-character}, we obtain
    \begin{align}
        \Tr T_m^\nw(N,k,\chi) &= A_1^\nw(m,N,k,\chi) - A_2^\nw(m,N,k,\chi) - A_3^\nw(m,N,k,\chi) + A_4^\nw(m,N,k,\chi) \\
        &= \chi(\sqrt{m}) \frac{k-1}{12} m^{k/2-1} \psi_{\condf(\chi)}^\nw(N) +  O\lrp{m^{k/2} 4^{\omega(N)} \sigma_0(N) \sqrt{N}},
    \end{align}
    as desired.
\end{proof}

Next, we give a lower bound for $\psi_{\condf(\chi)}^\nw(N)$. 
In \cite[Equation (6.2)]{ross}, Ross showed that if it is not the case that $2 \mid \condf(\chi)$ and $2 \parallel N/\condf(\chi)$, then
\begin{align}
    \psi_{\condf(\chi)}^\nw(N) \geq \frac{N}{\pi_3(N)}, 
    \qquad \text{where} \qquad
     \pi_3(N) = \prod_{p \mid N} \begin{dcases}
        4 & \text{if } p=2, \\
        \lrp{1 + \frac{2}{p-2}} & \text{if } p \neq 2.
    \end{dcases} \label{eqn:psi-f-new-lower-bound}
\end{align}

We now have the tools to prove Theorem \ref{thm:main-theorem} for general character.

\begin{proof}
    Since we are only considering $a_2^\nw(m,N,k,\chi)$ nontrivially vanishing, we can assume it is not the case that $2 \mid \condf(\chi)$ and $2 \parallel N/\condf(\chi)$; otherwise we would have $\dim S_k(\Gamma_0(N),\chi) = 0$.
    
    Now, let $T_m^\nw$ denote $T_m^{\nw}(N,k,\chi)$, and let $g = \condf(\chi)$. Then recall from Lemma \ref{lem:a2-coeff-formula} that
    \begin{align}   \label{eqn:a2-coeff-formula-temp}
        a_2^{\nw}(m,N,k,\chi) = \frac{1}{2} \lrb{ \lrp{\Tr T_m^{\nw}}^2 - \sum_{d \mid m} \chi(d) d^{k-1} \Tr T_{m^2/d^2}^{\nw} }.
    \end{align}
    Then applying Lemma \ref{lem:char-tr-asymp-m-square} to the summation in \eqref{eqn:a2-coeff-formula-temp}, we obtain
    \begin{align}
        \sum_{d \mid m} \chi(d) d^{k-1} \Tr T_{m^2/d^2}^{\nw} 
        &= \sum_{d \mid m}  \chi(d) d^{k-1} \Bigg[ \chi\lrp{\sqrt{\frac{m^2}{d^2}}} \frac{k-1}{12} \lrp{\frac{m^2}{d^2}}^{k/2-1} \psi_{g}^\nw(N)  \\
        & \qquad\qquad\qquad\qquad +  O\lrp{\lrp{\frac{m^2}{d^2}}^{k/2} 4^{\omega(N)} \sigma_0(N) \sqrt{N}} \Bigg] \\
        &= \sum_{d \mid m} \lrb{\chi(m) d\, \frac{k-1}{12}  m^{k-2}  \psi_{g}^\nw(N) \,+\, O\lrp{m^k  4^{\omega(N)} \sigma_0(N) \sqrt{N}}} \\
        &= \chi(m) \sigma_1(m)  m^{k-2}  \psi_{g}^\nw(N) \frac{k-1}{12}  \,+\, O\lrp{m^k 4^{\omega(N)} \sigma_0(N) \sqrt{N}}.
        \label{eqn:a2sum-bound-temp}
    \end{align}
    Now, if $m$ is not a perfect square, then we apply Lemma \ref{lem:char-tr-asymp-m-not-square} to the $(\Tr T_m^\nw)^2$ term from \eqref{eqn:a2-coeff-formula-temp} and obtain
    \begin{align} \label{eqn:a2Tm2-bound-temp}
        (\Tr T_m^\nw)^2 &= O\lrp{m^k 16^{\omega(N)} \sigma_0(N)^2}. 
    \end{align}

    Then combining \eqref{eqn:a2sum-bound-temp} and \eqref{eqn:a2Tm2-bound-temp}, we obtain 
    \begin{align}  
        a_2^{\nw}(m,N,k,\chi)  
        &= \frac{1}{2} \lrb{ \lrp{\Tr T_m^{\nw}}^2 - \sum_{d \mid m} \chi(d) d^{k-1} \Tr T_{m^2/d^2}^{\nw} } \\
        &= \frac12 \Bigg[  
        O\lrp{m^k 16^{\omega(N)} \sigma_0(N)^2} - \chi(m) \sigma_1(m)  m^{k-2}  \psi_{g}^\nw(N) \frac{k-1}{12}  \\
        & \mspace{290mu} - O\lrp{m^k 4^{\omega(N)} \sigma_0(N) \sqrt{N}}
        \Bigg] \\
        &= \frac{\chi(m) \sigma_1(m) m^{k-2} \psi_{g}^\nw(N) }{2} \lrb{ - \frac{k-1}{12} +  O\lrp{\frac{16^{\omega(N)} \sigma_0(N)^2 \sqrt{N}}{\psi_g^\nw(N)}}  }.
        \label{eqn:a2new-aymp-temp}
    \end{align}
    Then recall from \eqref{eqn:psi-f-new-lower-bound} that $\psi_{g}^\nw(N) \geq \frac{N}{\pi_3(N)}$. Additionally, $\frac13 \pi_3(N) \leq 2^{\omega(N)} \leq \sigma_0(N) = O(N^\varepsilon)$ for any $\varepsilon > 0$ \cite[Sections~18.1,~22.13]{hardy-wright}. Thus the $O(\cdot)$ error term in \eqref{eqn:a2new-aymp-temp} is $O \lrp{N^{-1/2 + \varepsilon}}$ and hence $\longrightarrow 0$ as $N \longrightarrow \infty$. So by a similar argument as in Proposition \ref{prop:nsquare}, $a_2^{\nw}(m,N,k,\chi)$ nontrivially vanishes for only finitely many triples $(N,k,\chi)$.

    If $m$ is a perfect square, then we have from Lemma \ref{lem:char-tr-asymp-m-square}, 
    \begin{align} \label{eqn:a2Tm2-sq-bound-temp}
         (\Tr T_m^\nw)^2 &= \lrp{\chi(\sqrt{m}) \frac{k-1}{12} m^{k/2-1} \psi_{g}^\nw(N) +  O\lrp{m^{k/2} 4^{\omega(N)} \sigma_0(N) \sqrt{N}}}^2 \\
         &=\chi(m) m^{k-2} \psi_g^\nw(N)^2 \frac{(k-1)^2}{144} + O\lrp{(k-1)m^k 4^{\omega(N)} \sigma_0(N) \sqrt{N} \psi_g^\nw(N)} 
         \label{eqn:a2Tm2-bound-temp2}
    \end{align}
    Here, we used the fact that $4^{\omega(N)} \sigma_0(N) \sqrt{N} = O\lrp{\psi_g^\nw(N)}$, as noted above.

    Then combining \eqref{eqn:a2sum-bound-temp} and \eqref{eqn:a2Tm2-bound-temp2}, we obtain 
    \begin{align}  
        a_2^{\nw}(m,N,k,\chi)  
        &= \frac{1}{2} \lrb{ \lrp{\Tr T_m^{\nw}}^2 - \sum_{d \mid m} \chi(d) d^{k-1} \Tr T_{m^2/d^2}^{\nw} } \\
        &= \frac12 \lrb{
        \chi(m) m^{k-2} \psi_g^\nw(N)^2 \frac{(k-1)^2}{144} + O\lrp{(k-1)m^k 4^{\omega(N)} \sigma_0(N) \sqrt{N} \psi_g^\nw(N)}} 
         \\
        &= \frac{\chi(m) (k-1) m^{k-2} \psi_{g}^\nw(N)^2 }{2} \lrb{ \frac{k-1}{144} +  
        O\lrp{ \frac{4^{\omega(N)} \sigma_0(N) \sqrt{N}}{\psi_g^\nw(N)} }} 
        \label{eqn:a2new-aymp-temp-sq}
    \end{align}
    Again, we have the $O(\cdot)$ error term $\longrightarrow 0$, so in this case as well, $a_2^{\nw}(m,N,k,\chi)$ nontrivially vanishes for only finitely many triples $(N,k,\chi)$.     
\end{proof}

\section{Discussion} \label{sec:discussion}
In this section, we discuss some motivation for the study of $a_2^\nw(m,N,k)$, as well as potential future work. 

As noted previously, we observe that the second coefficients $a_2(m,N,k)$ and $a_2^\nw(m,N,k)$ seem to be easier to study than the first coefficients $a_1(m,N,k)$ and $a_1^\nw(m,N,k)$. This is due in a large part to the fact that we can estimate the growth of $a_2(m,N,k)$ and $a_2^\nw(m,N,k)$ (e.g. \eqref{eqn:a2new-final-formula-temp}, \eqref{eqn:a2new-final-formula-temp2}), but $a_1(m,N,k) = \Tr T_m(N,k)$ and $a_1^\nw(m,N,k) = \Tr T_m^\nw(N,k)$ do not seem to follow any sort of asymptotic behavior.
This observation has several potential applications.  
Traces of Hecke operators have been used in the past to prove several interesting results about modular forms, and in certain of these scenarios, the second coefficient can be used instead of the trace to obtain stronger results. For example, in \cite{vilardi-xue}, Vilardi and Xue used the non-repetition of $\Tr T_2(1,k)$ in order to prove that, assuming Maeda's conjecture, Hecke eigenforms can be distinguished by their $2$nd Fourier coefficient. More recently, Clayton et. al. \cite{nonrep} instead studied the non-repetition of $a_2(m,N,k)$. They were able to use these non-repetition results for $a_2(4,1,k)$ in order to prove that, assuming Maeda's conjecure, Hecke eigenforms can also be distinguished by their $4$th Fourier coefficient. For any given $m \ge 2$, they also provided a general strategy to prove the same result for the $m$-th Fourier coefficient. 

Additionally, recall that the classical Lehmer conjecture predicts the non-vanishing of $\tau(m) = \Tr T_m(1,12)$. In order to generalize this conjecture to higher levels and weights, Rouse proposed the ``generalized Lehmer conjecture" \cite[Conjecture 1.5]{rouse}, predicting that for any non-square $m$, $\Tr T_m(N,k) \neq 0$ for all $N$ coprime to $m$ and even $k \ge 12$, $k \ne 14$.  
We note that since we also have $\tau(m) = \Tr T_m^\nw(1,12)$, one could also attempt to generalize the Lehmer conjecture to newspaces of higher level and weight.
However, this generalization turns out to not be true; we were able to find several families of $m,N,k$ for which $\Tr T_m^\nw(N,k)$ vanishes \cite{ross-code}. In future work, we plan to investigate various cases in which this occurs. At a minimum, we propose the following conjecture, based on our numerical computations.
\begin{conjecture} \label{conj:gen-Lehmer-newspace}
    Let $m \ge 2$ be a fixed non-square. Then there exist $N_1$ and $N_2$ coprime to $m$ such that $\Tr T_m^\nw(m,N_1,k) = 0$ for all even $k \ge 2$ and $\Tr T_m^\nw(m,N_2,k) \ne 0$ for all even $k \ge 2$.
\end{conjecture}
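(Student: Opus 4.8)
\emph{A plan towards Conjecture \ref{conj:gen-Lehmer-newspace}.} The two assertions are essentially independent, and the vanishing half (existence of $N_1$) comes almost directly out of the computations of Section \ref{sec:Ai-lincomb-multiplicative}, while the nonvanishing half (existence of $N_2$) is the substantive one.

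For $N_1$ I would take $N_1=p^3$ with $p$ any prime satisfying $p>4m$; then $\gcd(p^3,m)=1$ automatically. Since $m$ is not a perfect square, $A_1^\nw(m,p^3,k)=0$. For the other terms, apply \eqref{eqn:Ainew-equals-beta-convolution-Ai} together with Lemma \ref{lemma:dirichlet-convolution}, evaluating the relevant multiplicative functions at $p^3$. Because $p>4m$ we have $p\nmid(t^2-4m)$ and $p\nmid n$ for every $(t,n)$ occurring in $A_2$, so by Hensel's lemma (as in the proof of Lemma \ref{lem:mu-multiplicative}) $\mu_{t,n,m}(p^r)$ is independent of $r\ge1$; hence $\mu_{t,n,m}^\nw(p^3)=\mu_{t,n,m}(p^3)-2\mu_{t,n,m}(p^2)+\mu_{t,n,m}(p)=0$ and $A_2^\nw(m,p^3,k)=0$. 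Likewise $p>4m>\lrabs{d-m/d}$ for every $d\mid m$, so $\Sigma_{m,d}(p^r)=2$ for all $r\ge1$ (as in the proof of Lemma \ref{lem:sigma-multiplicative}), giving $\Sigma_{m,d}^\nw(p^3)=0$ and $A_3^\nw(m,p^3,k)=0$. Finally $\mathbf{1}^\nw(p^3)=1-2+1=0$, so $A_4^\nw(m,p^3,k)=0$ by Lemma \ref{lem:1-new-bound}. By \eqref{eqn:new-eichler-selberg-trace-formula} this gives $\Tr T_m^\nw(p^3,k)=0$ for every even $k\ge2$, so $N_1=p^3$ works.

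For $N_2$ the plan is to choose the level so that the Eichler--Selberg expansion of $\Tr T_m^\nw(N_2,k)$ collapses onto a single nonvanishing term. Start with $N_2=p$ prime, $p>4m$. As above $A_1^\nw=A_3^\nw=0$, and $\mathbf{1}^\nw(p)=-1$ contributes a constant $-\sigma_1(m)$ from $A_4^\nw$, present only when $k=2$. In $A_2^\nw$ one has $\mu_{t,n,m}^\nw(p)=\mu_{t,n,m}(p)-2=\lrp{\frac{t^2-4m}{p}}-1$, so the $(t,n)$-term survives exactly when $p$ is inert in $\BQ\lrp{\sqrt{t^2-4m}}$. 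I would use Chebotarev to pick $p$ so that the only surviving terms are those with $\lrabs{t}=t_0$ for a single value $t_0$; then $\Tr T_m^\nw(p,k)=c\,U_{k-1}(t_0,m)$ for even $k\ge4$, and $\Tr T_m^\nw(p,2)=c-\sigma_1(m)$, where $c>0$ depends only on weighted class numbers and we used $U_1(t_0,m)=1$. The key point is that $U_{k-1}(t_0,m)\ne0$ for every even $k$: writing $\rho,\bar{\rho}$ for the roots of $x^2-t_0x+m$, one has $\rho/\bar{\rho}+\bar{\rho}/\rho=t_0^2/m-2$, and the only rational values of this number for which $\rho/\bar{\rho}$ is a root of unity are $-2,-1,0,1,2$, of orders $2,3,4,6,1$ respectively, with orders $1$ and $3$ forcing $m=t_0^2$; hence for non-square $m$ the element $\rho/\bar{\rho}$ has no root-of-unity order that is odd, and since $k-1$ is odd for even $k$ this gives $(\rho/\bar{\rho})^{k-1}\ne1$, i.e.\ $U_{k-1}(t_0,m)\ne0$. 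If $c\ne\sigma_1(m)$ we are done; otherwise, replacing $N_2=p$ by $N_2=p_1p_2$ with two primes both inert in the chosen field and split in the others changes the $k=2$ value to $\sigma_1(m)-2c$, and the two values $c-\sigma_1(m)$, $\sigma_1(m)-2c$ cannot both vanish, so one of the two levels gives $\Tr T_m^\nw(N_2,k)\ne0$ for every even $k\ge2$.

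The hard step is arranging that a single term survives. It requires a value $t_0$ with $t_0^2<4m$ for which (i) $\BQ\lrp{\sqrt{t_0^2-4m}}$ equals $\BQ\lrp{\sqrt{t^2-4m}}$ for no other $t$ with $t^2<4m$, and (ii) this field is cut out by a Frobenius condition independent of the others --- equivalently, the squarefree part of $t_0^2-4m$ is independent in $\BQ^\times/(\BQ^\times)^2$ of the squarefree parts of the remaining $t^2-4m$. Both hold for generic $m$, but I do not see how to guarantee them for every non-square $m\ge2$. When they fail, one must either rule out cancellation among the several surviving terms $U_{k-1}(t,m)$ directly, or fall back on the observation that $\Tr T_m^\nw(N_2,k)\bmod q$ is periodic in $k$ for any auxiliary prime $q$ (with period computable from $m$, $N_2$, $q$), which reduces Conjecture \ref{conj:gen-Lehmer-newspace} for that particular $m$ to a finite check over one period. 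Turning this fallback into a uniform statement --- that a suitable pair $(N_2,q)$ exists for every $m$ --- is, I expect, the central obstacle.
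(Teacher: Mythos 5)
This statement is labeled a \emph{conjecture} in the paper, with only numerical evidence offered, so there is no proof in the paper against which to compare your argument; let me assess the plan on its own merits.

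Your argument for the vanishing half is in fact a complete, unconditional proof, not merely a plan, and you should say so. Taking $N_1=p^3$ for a prime $p>4m$ forces $p\nmid(t^2-4m)$, $p\nmid n$, and $p\nmid|d-m/d|$ for every $t,n,d$ arising in the trace formula, because each of these integers is nonzero (using that $m$ is not a square, so $t^2\ne4m$ and $d\ne m/d$) and of absolute value less than $4m<p$. The paper's own local computations — the $r\ge3$ cases inside the proofs of Lemmas \ref{lem:mu-multiplicative} and \ref{lem:sigma-multiplicative}, together with Lemma \ref{lemma:dirichlet-convolution} applied to $\mathbf 1$ — then give $\mu_{t,n,m}^\nw(p^3)=\Sigma_{m,d}^\nw(p^3)=\mathbf 1^\nw(p^3)=0$, while $A_1^\nw=0$ because $m$ is not a square. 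All four $A_i^\nw(m,p^3,k)$ vanish, so $\Tr T_m^\nw(p^3,k)=0$ for every even $k\ge2$. This is a clean and rather striking consequence of the machinery already developed in Section \ref{sec:Ai-lincomb-multiplicative}: the trace vanishes identically even though $\dim S_k^\nw(\Gamma_0(p^3))$ is on the order of $\frac{k-1}{12}p^3$.

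The nonvanishing half is the genuine content of the conjecture, and the gap you point to is real. The Chebotarev step requires the square class of $t_0^2-4m$ in $\BQ^\times/(\BQ^\times)^2$ to lie outside the subgroup generated by the square classes of the other $t^2-4m$ with $t^2<4m$, so that a Frobenius at $p$ can be made to act by $-1$ on $\sqrt{t_0^2-4m}$ and by $+1$ on all the others. There is no evident reason such a $t_0$ exists for arbitrary non-square $m$. For instance at $m=5$ the values $t^2-20$ for $0\le t\le 4$ are $-20,-19,-16,-11,-4$, and the square classes of $-16$ and $-4$ coincide; there one can still salvage the argument by choosing $t_0=1$, but the example shows the required independence must be verified $m$ by $m$ and can fail for particular choices of $t_0$. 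Your Niven-theorem argument that $U_{k-1}(t_0,m)\ne0$ for even $k$ and non-square $m$ is correct and is exactly the right observation, and your handling of the $A_4^\nw$ correction at $k=2$ via the choice between $N_2=p$ and $N_2=p_1p_2$ is sound — but both only bite once a single $t_0$-term has been isolated. The fallback of reducing modulo an auxiliary prime $q$ and checking one period settles each fixed $m$ by a finite computation but does not obviously yield a uniform statement; as you say, that is where the difficulty lies. So the $N_2$ half remains open, and the conjecture should be regarded as reduced (by your $N_1$ argument) to the nonvanishing assertion alone.
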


Like the generalized Lehmer conjecture from Rouse, determining precisely when $\Tr T_m^\nw(N,k)$ vanishes seems to be a rather difficult problem. However, as the content of this paper shows, we are actually able to obtain results if we consider the second coefficient $a_2^\nw(m,N,k)$ instead of the first coefficient $a_1^\nw(m,N,k)$. (And in fact we can even generalize slightly to general character.) One of our goals is that further study of these other coefficients $a_j(m,N,k)$ and $a_j^\nw(m,N,k)$ will lead to ideas and strategies to answer questions about the behavior of the first coefficient $a_1(m,N,k) = \Tr T_m(N,k)$ and $a_1^\nw(m,N,k) = \Tr T_m^\nw(N,k)$. Similarly, we are also interested in answering questions about the behavior of the last coefficient $a_{\dim S_k(\Gamma_0(N))}(m,N,k)$ and $a_{\dim S_k^\nw(\Gamma_0(N))}^\nw(m,N,k)$.
This is just the determinant of $T_m(N,k)$ and $T_m^\nw(N,k)$, so any results concerning the behavior of this last coefficient would be very valuable in understanding the Hecke operators in general.
Much more work and investigation remains to be completed in this direction.


Now, the results of \cite{ross-xue} show that the second coefficient $a_2(m,N,k)$ (over $S_k(\Gamma_0(N))$) is biased to be positive for square $m$ and negative for non-square $m$. Moreover, this paper establishes that the same bias holds when one restricts to the newspace $S_k^\nw(\Gamma_0(N))$. One might then ask if the second coefficient exhibits a similar behavior for other restrictions. For example, these two results would lead one to intuitively expect the same result on the old space $S_k^{\text{old}}(\Gamma_0(N))$. However, this needs to be proven and made more precise. For another example, one could also ask the same question for the restrictions to the subspaces determined by Atkin-Lehner sign pattern $S_k^{\nw,\varepsilon_M}(\Gamma_0(N))$ \cite{kimball-martin}.

Lastly, we note that the second coefficient is closely related to the second moment, and hence the quadratic mean, of the eigenvalues. Using the techniques developed to study $a_2(m,N,k)$ and $a_2^\nw(m,N,k)$, we were also able to determine the average size of the eigenvalues of the Hecke operators (measured via the quadratic mean) \cite{avg-size}. Using a similar strategy, further work can be done to similarly compute the average size of the eigenvalues over $S_k^{\nw,\varepsilon_M}(\Gamma_0(N))$ and other subspaces. Additionally, further investigation could be done to compute other moments of the eigenvalues of the Hecke operators $T_m(N,k)$ and $T_m^\nw(N,k)$.

\section*{Acknowledgements}
This research was supported by NSA MSP grant H98230-24-1-0033. 

\bibliographystyle{plain}
\bibliography{bibliography.bib}

\end{document}